\newtheorem{deff}{Definition}[section]
\newtheorem{lemma}[deff]{Lemma}
\newtheorem{properties}[deff]{Properties}
\newtheorem{theorem}[deff]{Theorem}
\newtheorem{notation}[deff]{Notation}
\newtheorem{corollary}[deff]{Corollary}
\newtheorem{proposition}[deff]{Proposition}
\newtheorem{em-example}[deff]{Example}
\newtheorem{em-def}[deff]{Definition}        
\newtheorem{em-remark}[deff]{Remark}         
\newtheorem{em-question}[deff]{Question}
\newenvironment{example}{\begin{em-example} \em }{ \end{em-example}}
\newenvironment{definition}{\begin{em-def} \em  }{ \end{em-def}}
\newenvironment{remark}{\begin{em-remark} \em }{\end{em-remark}}
\newenvironment{question}{\begin{em-question}\em }{\end{em-question}}
\newenvironment{proof}{\noindent {\textit Proof}:}{\QED \smallskip}
\newcommand{\tr}{^{\triangleright}}
\newcommand\QED{\hfill QED \medskip}
\def\ker{\textrm{ker}}
\def\CHom{\textrm{CHom}}
\def\id{\textrm{id}}
\def\Hom{\textrm {Hom}}
\def\:{\nobreak \hskip .1111em\mathpunct {}\nonscript \mkern
-\thinmuskip {:}\hskip .3333emplus.0555em\relax}
\def\T{{\mathbb T}}
\def\Q{{\mathbb Q}}
\def\Z{{\mathbb Z}}
\def\N{{\mathbb N}}
\def\R{{\mathbb R}}
\def\Q{{\mathbb Q}}
\def\C{{\mathbb C}}
\title{Book's title}
\date{Date}
\author{Author's name}
\begin{document}



\Large{
 
\noindent    \centerline{ }

\noindent    \centerline{ }

\noindent    \centerline{ }

\noindent    \centerline{ }

\centerline{Proyecto Fin de M\'aster en Investigaci\'on Matem\'atica}

\centerline{Facultad de Ciencias Matem\'aticas, Universidad Complutense de Madrid}

\noindent   \centerline{ }

\noindent    \centerline{ }

\begin{figure}
  \centering
    \includegraphics[width=0.2\textwidth]{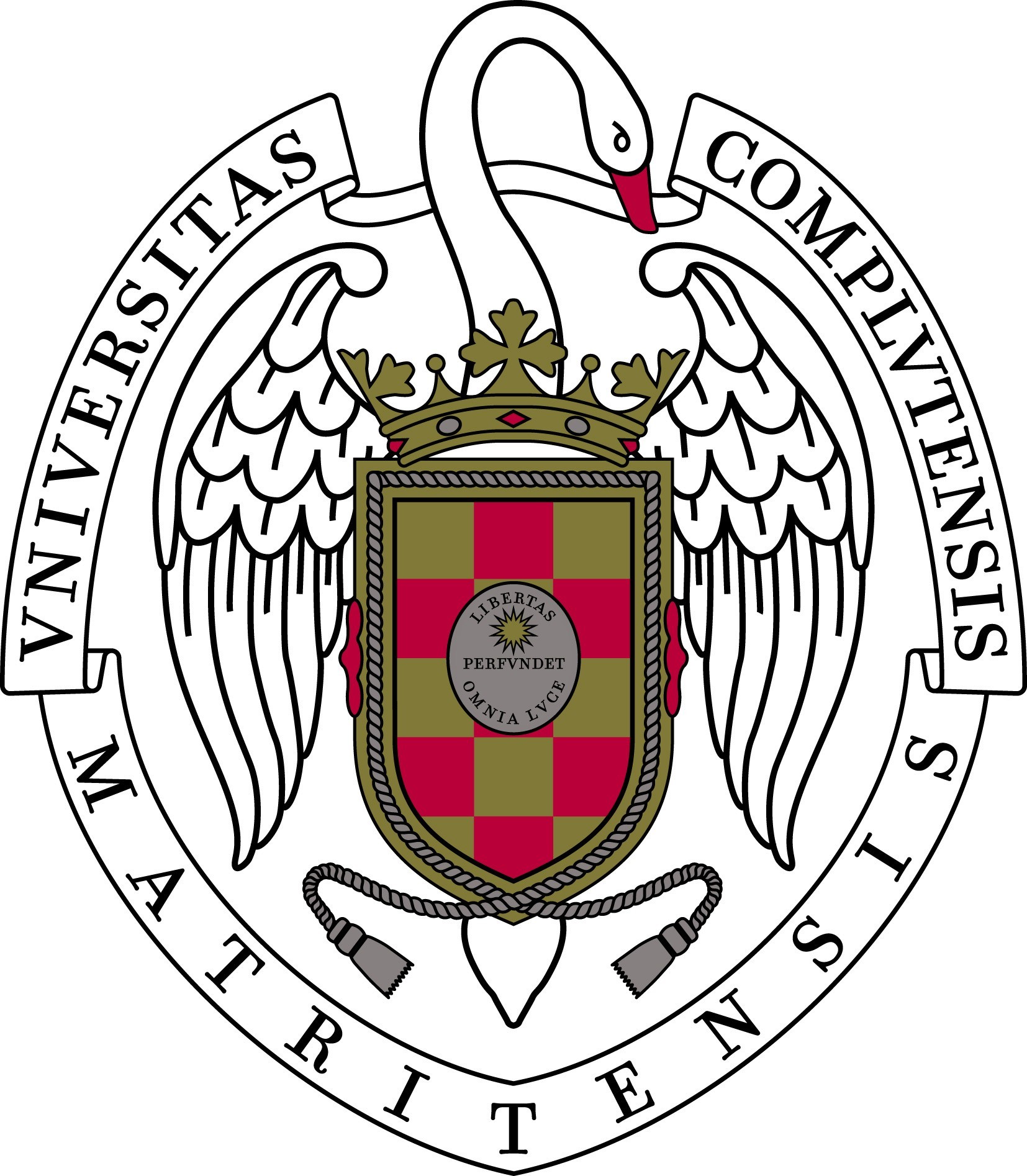}
\end{figure}

\noindent    \centerline{ }

\noindent    \centerline{ }

\noindent    \centerline{ }

\noindent    \centerline{ }

\centerline{{\huge {\textit{Locally quasi-convex topologies on the group of the integers}}}}

\noindent  \centerline{ }

\noindent   \centerline{ }

\noindent    \centerline{ }

\noindent    \centerline{ }

\noindent    \centerline{ }

\noindent    \centerline{ }

\noindent    \centerline{ }

\noindent    \centerline{ }

\centerline{Daniel de la Barrera Mayoral}

\centerline{Dirigido por: Elena Mart\'in Peinador}

\centerline {Curso acad\'emico: 2009/2010}

\newpage 

\noindent El abajo firmante, matriculado en el M\'aster en Investigaci\'on Matem\'atica de la Facultad de
Ciencias Matem\'aticas, autoriza a la Universidad Complutense de Madrid (UCM) a difundir y utilizar
con fines acad\'emicos, no comerciales y mencionando expresamente a su autor el presente Trabajo
Fin de M\'aster: "Locally quasi-convex group topologies on the group of the integers", realizado durante
el curso acad\'emico 2009-2010 bajo la direcci\'on de Elena Mart\'in Peinador, y la colaboración externa de Lydia Aussenhofer, en el Departamento de
Geometr\'ia y Topolog\'ia, y a la Biblioteca de la UCM a depositarlo en el Archivo Institucional EPrints
Complutense con el objeto de incrementar la difusi\'on, uso e impacto del trabajo en internet
y garantizar su preservaci\'on y acceso a largo plazo.

\noindent    \centerline{ }

\noindent    \centerline{ }

\noindent    \centerline{ }

\noindent    \centerline{ }

\noindent    \centerline{ }

\noindent    \centerline{ }

\noindent FDO: DANIEL DE LA BARRERA MAYORAL

\newpage 

\noindent \underline{Resumen}

\noindent La topolog\'ia m\'as natural en el grupo $\Z$ de los enteros, es la discreta. Son tambi\'en muy conocidas las topolog\'ias $p$-\'adicas en $\Z$, para cualquier n\'umero primo $p$. Otra topolog\'ia de grupo importante es la topolog\'ia d\'ebil asociada al grupo de los homomorfismos de $\Z$ en el c\'irculo unidad del plano complejo; es decir, la definida por los caracteres y que se conoce por "toplog\'ia de Bohr" en $\Z$.

\noindent En \cite{tesislorenzo}, se comprueba que tomando como base de entornos de cero los conjuntos $(W_n)$, donde $W_n:=\{k\in\Z\mid\forall x\in S$, $k\cdot x\in[-\frac{1}{4n},\frac{1}{4n}]+\Z\}$ con $S$ una sucesi\'on quasi-convexa en $\T$, se obtiene una toplog\'ia de grupo sobre los enteros, $\tau_S$. Sabemos que la topolog\'ia $\tau_S$ es metrizable y localmente cuasi-convexa.

\noindent En este trabajo caracterizamos las sucesiones convergentes en $\tau_S$, para ciertos subconjuntos $S\subset\T$. Damos condiciones sobre los elementos de $\Z$ para pertenecer a un cierto entorno $W_n$. Tomando como referencia una sucesi\'on $(b_n)$ de n\'umeros naturales, con ciertas restricciones, hemos considerado la "topolog\'ia lineal asociada" cuya base de entornos de $0$ es $\{b_n\Z\mid n\in\N\}$ y la de "la convergencia uniforme" en $S=\{\frac{1}{b_n}+\Z\mid n\in\N\}\subset \T$. Hacemos un estudio comparativo entre ambas clases de topolog\'ias.

\noindent    \centerline{ }

\noindent \underline{Palabras clave:} Topolog\'ia de grupo, subconjunto quasi-convexo, polar, dualidad, topolog\'ia lineal, car\'acter, topolog\'ias compatibles, topolog\'ia de la convergencia uniforme, topolog\'ia compacto-abierta.

\centerline{ }

\centerline{ }

\noindent \underline{Abstract}

\noindent The most natural group topology on $\Z$ is the discrete one. There are other well-known group topologies on $\Z$, like the $p$-adic, defined for any prime number $p$. It is also an important group topology the weak topology with respect to the group of homomorphisms from $\Z$ to the unit circle of the complex plane; that is, the one defined by the characters and which is known as "the Bohr topology" on $\Z$.

\noindent In \cite{tesislorenzo}, it is proved that taking as a neighbourhood basis at $0$ the subsets $\{W_n\mid n\in\N\}$, defined by $W_n:=\{k\in\Z\mid\forall x\in S$, $ k\cdot x\in[-\frac{1}{4n},\frac{1}{4n}]+\Z\}$, where $S$ is a quasi-convex sequence in $\T$, a group topology on $\Z$ is obtained, $\tau_S$. We know that the topology $\tau_S$ is metrizable and locally quasi-convex.

\noindent In this monograph we characterize convergent sequences in $\tau_S$, for some $S\subset\T$. We give sufficient conditions on the elements of $\Z$ in order that they belong to a neighbourhood $W_n$. For a fixed sequence $(b_n)$ of natural numbers, restricted to mild conditions, we have considered the "linear topology associated" whose neighbourhood basis at $0$ is $\{b_n\Z\mid n\in\N\}$ and the "topology of uniform convergence" on $S=\{\frac{1}{b_n}+\Z\mid n\in\N\}\subset\T$. We make a comparative study between both classes of topologies.

\noindent    \centerline{ }

\noindent \underline{Key words:} Group topology, neighbourhood basis for a group topology, quasi-convex subset, polar, duality, linear topology, character, compatible topologies, topology of the uniform convergence, compact-open topology.

\noindent    \centerline{ }

\noindent 2010 Mathematics Subject Classification: 54H11, 22A05.

\chapter*{Introduction}
\noindent The idea of a general theroy of continuous groups is due to Sophus Lie, who developed his theory in the decade 1874-1884. Lie's work is the origin of both modern theory of Lie groups and the general theory of topological groups. However, the topological considerations that are nowadays essential in both theories, are not part of his work.

\noindent A topological point of view in the theory of continuous groups was first introduced by Hilbert. Precisely, in his famous list of 23 problems, presented in the International Congress of Mathematicians of 1900, held in Paris, the Fifth Problem boosted investigations on topological groups.

\noindent In modern language, the Fifth Problem asked if any locally euclidean topological group can be endowed with a structure of analytic variety in such a way that it becomes a Lie group. 

\noindent In 1929 von Neumann using integration on compact general groups, introduced by himself, solved the problem for compact groups. In 1934, Pontryagin solved it for locally compact abelian groups, using the character theory introduced by himself.

\noindent The final resolution, at least in this interpretation of what Hilbert meant, came with the work of A. Gleason, D. Montgomery and L. Zippin in the 1950s.

\noindent In 1953, Hidehiko Yamabe obtained the final answer to Hilbert's Fifth Problem: A connected locally compact group $G$ is a projective limit of a sequence of Lie groups, and if $G$ "has no small subgroups", then $G$ is a Lie group.

\noindent In 1932, Stefan Banach, defined in Th\`{e}orie des Op\`{e}rations Lin\`{e}aires, the spaces that would be named after him as special cases of topological groups. Since then, both theories have been developed in a different, but somehow parallel, way.

\noindent Several theorems for Banach, or even for locally convex spaces have been reformulated for abelian topological groups \cite{completeness}, \cite{eberlein-smulian}, \cite{bohrcompactification}, \cite{Schurproperty}, \cite{Dunford-Pettis}. The main obstacle for this task is the lack of the notion of convexity for topological groups. However, Vilenkin gave the definition of quasi-convex subset for a topological abelian group inspired in the Hahn-Banach theorem. In order to deal with it, many convenient tools had to be developed and this opened the possibility of a fruitful treatment of topological groups. The notion of quasiconvexity depends on the topology, in contrast with convexity, which is a purely algebraic notion.

\noindent After this, with quasi-convex subsets at hand, it was quite natural to define locally quasi-convex groups, which was also done by Vilenkin in \cite{vilenkin}.

\noindent Duality theory for locally convex spaces was mainly developed in the mid of twentieth century, and by now is a well-known rich theory.

\noindent There is a very natural way to extend it to locally quasi-convex abelian groups. From now on, we only speak of abelian groups although not explicitly mentioned. Fix first the dualizing object as the unit circle of the complex plane $\T$. The continuous homomorphisms from a topological group into $\T$ play the role of the continuous linear forms and they will be named continuous characters.

\noindent The set of continuous characters defined on a group $G$ has a natural group structure provided by the group structure of $\T$. Thus, we can speak of the group of characters of $G$. If it is endowed with the compact-open topology, the topological group obtained is called the dual group of $G$.

\noindent The research we have done in this memory deals with the following problem: Let $(G,\tau)$ be an abelian topological group and let $G^\wedge$ be its dual group. Consider the set of all locally quasi-convex group topologies in $G$ whose dual group coincides with $G^\wedge$. They will be called compatible topologies.

\noindent It is known that there is a minimum for this set, which is the weak topology induced by $G^\wedge$. 

\noindent However, it is not known if this set has a maximum element; whenever it exists, it will be called the Mackey topology for $(G,\tau)$. The problem of finding the largest compatible locally quasi-convex topology for a given topological group $G$, with this degree of generality (i.e. in the framework of locally quasi-convex groups) was first settled in 1999 in \cite{mackey}. Previously in 1964 Varopoulos \cite{varopoulos} had studied the question for the class of locally precompact group topologies.

\noindent The current state of the question is as follows: in general, it is not known if there is a maximum for the set of all locally quasi-convex topologies in $G$. Partial answers are the following: if $G$ is complete and metrizable, then there exists the Mackey topology \cite{mackey}. If $G$ is metrizable (but not complete) the original topology may not coincide with the Mackey topology \cite{unpublished}. 

\noindent It is known that there is always at least one locally quasi-convex compatible topology, namely the Bohr topology. This happens to be minimum. If $G$ is a locally compact abelian group, then the original topology coincides with the Mackey topology. Furthermore, the set of all locally quasiconvex compatible group topologies has cardinal greater or equal to $3$ \cite{unpublished}. In our future work we are looking forward to finding the general solution of existence (or non-existence) of the Mackey topology, as posed in \cite{mackey}. The conjecture is that it does not exist, in general, and the solution will pass through describing some topologies on $\Z$, the group of the integers, which may give us the same dual, but they may have no maximum (or the maximum generated by them, may not be compatible). So we start in this monograph by studying thoroughly different topologies on $\Z$.


\noindent We now explain the contents of the memory: In the first three chapters we introduce the fundamental notion of a topological group, give a survey of the main known result concerning topological groups, duality and reflexivity. Chapters 4 and 5 contain the \textbf{new} results obtained.

\noindent We summarize the contens of each chapter.

\noindent The basic definitions of topological groups, as well as general results on them and standard notation on duality are given in \textbf{chapter 1}.

\noindent \textbf{Chapter 2} deals with the relationship between duality and quasi-convex groups. It is important to observe that any dual group is locally quasi-convex, and, hence any reflexive group is locally quasi-convex. 

\noindent In section 5, we provide some results on embeddings of locally quasi-convex groups in a product of locally quasi-convex metrizable groups, which were stablished by Lydia Aussenhofer in \cite{tesislydia}.

\noindent In \textbf{chapter 3}, we tackle the group of the integers with different group topologies. First, we study the $2$-adic topology, denoted ussually by $\tau_2$, as a particular case of $p$-adic topologies. 

\noindent Then, we study topologies of uniform convergence on a \textbf{fixed} subset $S$ of $\T$. Here $\T$ is considered as the dual group of the integers. Clearly, the mentioned topology (we shall denote it by $\tau_S$) depends on the set $S$. We have started by taking $S$ to be a sequence. We have in mind to continue with other types of subsets $S$.

\noindent In this chapter, we describe the convergent sequences in $\tau_2$ and $\tau_S$ (for particular sets $S$), and as a consequence of the criterion found we can claim that the topologies $\tau_S$ and $\tau_2$ are related.

\noindent In \textbf{chapter 4}, we try to obtain the dual group of $\Z$ endowed with $\tau_S$ (for particular cases).

\noindent We remark the result obtained in \ref{bn}. It provides us with a development of an integer number as a sum pivoted by a particular -previously fixed- sequence of natural numbers. This tool, which is close to Number Theory, gives a constructive way to obtain the coefficients for this sum, and it is important is the sequel.

\noindent We characterize null sequences in $\tau_S$ in terms of the mentioned development. We also try to characterize the elements of a fixed neighbourhood at $0$ for $\tau_S$ through their development as a series pivoted by the elements in $S$. 

\noindent In theorem \ref{tau2noestauS}, we prove that for our particular type of set $S$, the topology $\tau_S$ does not coincide with the $2$-adic topology.

\noindent In \textbf{chapter 5}, we consider linear topologies on $\Z$, which are those whose neighbourhood basis at $0$ consists of subgroups of $\Z$.

\noindent For a fixed sequence $(b_n)$ of natural numbers we assign two different topologies, namely the linear topology whose neighbourhood basis is $\{b_n\Z\mid n\in\N\}$, denoted by $\tau_{(b_n\Z)}$, and the "associated $S$-topology generated by $(b_n)$" which is the topology of the uniform convergence on $\{\frac{1}{b_n}+\Z\mid n\in\N\}$, where $\T$ is considered to be the quotient $\R/\Z$. The latter will be denoted (as usual) by $\tau_S$.

\noindent We characterize the null sequences of $\tau_S$
and the elements of a fixed $0$-neighbourhood. 

\noindent We do also characterize the null sequences in $\tau_{(b_n\Z)}$.

\noindent Finally, we prove that for any $(b_n)$, the linear topology and the associated $S$-topology generated by $(b_n)$ are different.

\noindent These linear group topologies are in certain sense the natural extensions of the $p$-adic topology. In this line, the dual of $\Z$ endowed with these topologies is a natural extension of the Pr\"ufer's group.

\newpage
\tableofcontents 

\chapter{Preliminaries}
\section{General definitions and results on topological groups}

\begin{definition}
Let $G$ be the supporting set of a group and of a topological structure. Suppose that:

(i) the mapping $(x,y)\mapsto xy$ of $G\times G\rightarrow G$ is a continuous mapping.

(ii) the mapping $x\mapsto x^{-1}$ of $G\rightarrow G$ is continuous.

\noindent Then $G$ is called a topological group.
\end{definition}

\noindent \textbf{We will deal only with abelian groups.}

\noindent For  a topological group $G$,   the traslation mapping
$t_a:G\rightarrow G$ defined by $t_a(x)=x\cdot a$ (for $a \in G$) is
a homeomorphism (See proposition 1 in \cite{morris}).

\noindent As a consequence of this fact, the topology of a topological group can be defined just giving a neighbourhood basis at the neutral element.

\begin{lemma}\label{ctg}
Let $G$ be a topological group and let $\mathcal{U}$ be a non-empty subset of the power set of $G$ satisfying:

(i) $e\in U$ for all $U\in\mathcal{U}$.

(ii) For all $U\in\mathcal{U}$ there exists $V\in\mathcal{U}$ such that $-V\subseteq U$.

(iii) For all $U\in\mathcal{U}$, there exists $V\in\mathcal{U}$ such that $V+V\subseteq U$.

(iv) For every pair $U,V\in\mathcal{U}$, there exists $W\in\mathcal{U}$ such that $W\subseteq U\cap V$.

\noindent Then there exists a unique group topology $\mathcal{O}$ on $G$ such that $\mathcal{U}$ is a neighbourhood basis at the unit element. $(G,\mathcal{O})$ is a Hausdorff space if and only if $\displaystyle{\bigcap_{U\in\mathcal{U}}U=\{e\}}$.
\end{lemma}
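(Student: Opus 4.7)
The plan is to define the topology $\mathcal{O}$ explicitly from $\mathcal{U}$ and its translates, then check in order that $\mathcal{O}$ is a topology, that $\mathcal{U}$ is a neighbourhood basis at $e$, that both group operations are continuous, uniqueness, and finally the Hausdorff criterion. Concretely, I declare a set $A \subseteq G$ to be open if for every $a \in A$ there exists $U \in \mathcal{U}$ with $a + U \subseteq A$ (writing additively, as the paper restricts to abelian groups).

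Closure of $\mathcal{O}$ under arbitrary unions is immediate, and closure under finite intersections follows from condition (iv): if $A_1, A_2 \in \mathcal{O}$ and $a \in A_1 \cap A_2$, pick $U_i \in \mathcal{U}$ with $a + U_i \subseteq A_i$ and then $W \in \mathcal{U}$ with $W \subseteq U_1 \cap U_2$. To see that $\mathcal{U}$ is a neighbourhood basis at $e$, one direction is built into the definition. For the other, given $U \in \mathcal{U}$, the set $\mathrm{int}(U) := \{x : \exists V \in \mathcal{U},\ x + V \subseteq U\}$ contains $e$ (take $V = U$) and lies in $\mathcal{O}$: given $x \in \mathrm{int}(U)$ with $x + V \subseteq U$, use (iii) to choose $W$ with $W + W \subseteq V$, and then $x + W \subseteq \mathrm{int}(U)$.

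Continuity of the operations is where (ii) and (iii) pay off. For addition at $(a,b)$, a basic neighbourhood $a + b + U$ contains $(a + V) + (b + V)$ for the $V$ from (iii) with $V + V \subseteq U$. For inversion at $a$, a basic neighbourhood $-a + U$ contains $-(a + V)$ for the $V$ from (ii) with $-V \subseteq U$. Uniqueness is painless: any group topology for which $\mathcal{U}$ is a neighbourhood basis at $e$ must, by translation invariance, have $\{a + U : U \in \mathcal{U}\}$ as a basis at every $a$, and this pins down the open sets.

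For the Hausdorff part, one direction is routine: separating $e$ from any $x \neq e$ produces some $U \in \mathcal{U}$ with $x \notin U$. For the converse, given $x \neq y$ one applies $\bigcap_{U \in \mathcal{U}} U = \{e\}$ to $x - y$ to obtain $U \in \mathcal{U}$ with $x - y \notin U$, then combines (ii), (iii), and (iv) to produce $V \in \mathcal{U}$ with $V - V \subseteq U$, so that $x + V$ and $y + V$ are disjoint. The only mildly delicate point in the whole argument is the verification that each $U \in \mathcal{U}$ genuinely has $e$ in its interior; the remaining manipulations are the standard pattern for defining a group topology from a filter base at the identity, and I expect no real obstacle beyond bookkeeping.
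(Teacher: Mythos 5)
Your argument is correct and complete in its essentials; the paper itself gives no proof of this lemma, deferring to Bourbaki (Ch.~III, \S 1.2, Prop.~1), and what you have written is precisely the standard construction that citation points to, including the one genuinely delicate step (showing each $U\in\mathcal{U}$ contains the open set $\mathrm{int}(U)=\{x:\exists V\in\mathcal{U},\ x+V\subseteq U\}$, which needs (iii) for openness and (i) for $\mathrm{int}(U)\subseteq U$). The only point worth flagging is that your continuity verifications silently use commutativity (e.g.\ $(a+V)+(b+V)=a+b+V+V$ and $-(a+V)=-a+(-V)$); this is legitimate here because the paper restricts to abelian groups, but in the non-abelian setting the stated hypotheses are insufficient and Bourbaki's additional conjugation-invariance axiom would be required.
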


\begin{proof}
See chapter III, $\S 1.2$, proposition 1 in \cite{bourbaki}.
\end{proof}

\begin{lemma}\label{subgrupoabierto}
Let $G$ be a topological group. If a subgroup $H\subset G$ contains
a neighbourhood of the neutral element, then $H$ is open.
\end{lemma}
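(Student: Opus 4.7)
The plan is to leverage the fact, stated in the excerpt just before the lemma, that for every $a \in G$ the translation $t_a:G\to G$ is a homeomorphism. This homogeneity lets us transport the single neighbourhood of the neutral element sitting inside $H$ to a neighbourhood of every other point of $H$.

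First I would unpack the hypothesis: since $H$ contains a neighbourhood of the neutral element, by definition of neighbourhood there is an open set $U\subseteq G$ with $0\in U\subseteq H$. Next, for each $h\in H$ I would consider the translate $h+U = t_h(U)$. Because $t_h$ is a homeomorphism, $h+U$ is open in $G$; it contains $h = h+0$; and because $U\subseteq H$ and $h\in H$, the fact that $H$ is closed under the group operation yields $h+U\subseteq H$. Thus $H$ is a neighbourhood of every one of its points.

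Finally I would conclude by writing
\[
H \;=\; \bigcup_{h\in H}\,(h+U),
\]
which displays $H$ as a union of open subsets of $G$ and therefore proves that $H$ is open. There is no genuine obstacle here: the argument is a direct application of the translation-is-a-homeomorphism principle, and the only thing one really has to notice is that the subgroup hypothesis is exactly what makes each translate $h+U$ stay inside $H$.
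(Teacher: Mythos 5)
Your proof is correct; the paper states this lemma without giving any proof, and your argument is the standard one it implicitly relies on: writing $H=\bigcup_{h\in H}(h+U)$ with $U$ open, $0\in U\subseteq H$, and using that each translate $t_h$ is a homeomorphism so that $h+U$ is open, while the subgroup property keeps $h+U$ inside $H$. No gaps.
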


\begin{definition}
If $G$ is a group, and $S\subset G$, we denote by $\langle S\rangle$ the subgroup generated by $S$.
\end{definition}

\begin{definition}
A group topology is called linear if it has a neighbourhood basis at $0$ consisting of subgroups.
\end{definition}

\section{Basic definitions on duality}

\noindent In this section we describe the dual group of a
topological group. We omit  the  proofs which are simple
verifications.

\begin{notation}
The unit circle of the complex plane is denoted by  $\T:
=\{z\in\C:\mid z\mid=1\}$ and let $\T_+ : =\T\cap\{\mbox{Re}z\geq
0\}$
\end{notation}

\begin{remark}
We can also consider $\T$ as the quotient of $\R$ over $\Z$. In this way we shall consider $\T\approx[-\frac{1}{2},\frac{1}{2})$. We define $\T_+=[-\frac{1}{4},\frac{1}{4}]$ and $\T_m=[-\frac{1}{4m},\frac{1}{4m}]$ for any integer $m$.
\end{remark}

\begin{notation}
 Let $G$ be a topological group. The set of all continuous homomorphisms from $G$ to
 $\T$ will be denoted by $\CHom(G,\T)$.
 \end{notation}

 \noindent Clearly $\CHom(G,\T)$ endowed with the natural multiplication:
 $\varphi_1\varphi_2:x\mapsto\varphi_1(x)\varphi_2(x)$ is an abelian
 group.

\begin{deff}\label{definiciondual}
The dual of a topological group $G$ is the group
$\CHom(G,\tau)$ equipped with the compact open topology.
 We shall denote it by $G^\wedge$ (or by $(G,\tau)^\wedge$
it we want to stress the topology on $G$.)
\end{deff}

\begin{definition}
Let $(G,\tau)$ a topological group. The polar  of a subset $S\subset
G$ is defined as $S\tr: =\{\chi\in G^\wedge\mid\chi(S)\subset\T_+\}$.
\end{definition}

\begin{lemma}
Let $(G,\tau)$ be a topological group. The compact-open topology in
$G^\wedge$ can be described as the family of sets $\mathcal{U}_{G^\wedge}(1)=\{K\tr
\mid K\subset G \mbox{  is compact } \}$ taken as a neighborhood basis at $0$.
\end{lemma}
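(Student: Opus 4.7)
The plan is to prove both directions of the statement: that each polar $K\tr$ of a compact set is a compact-open neighborhood of the trivial character, and conversely that every compact-open neighborhood of the trivial character contains some $K\tr$.

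For the first direction, recall that the compact-open topology on $G^\wedge$ is generated by subbasic sets of the form $W(K,V) = \{\chi\in G^\wedge : \chi(K)\subseteq V\}$ with $K\subseteq G$ compact and $V\subseteq \T$ open. Writing $\T_+ = [-\tfrac14,\tfrac14]$ in the additive picture, its interior $V_0 = (-\tfrac14,\tfrac14)$ is open and contains $0$, so $W(K,V_0)\subseteq K\tr$. Thus each $K\tr$ contains a subbasic compact-open neighborhood of the trivial character, hence is itself such a neighborhood.

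For the converse, it is enough to show that every subbasic neighborhood $W(K,V)$ of the trivial character (so $V$ is open in $\T$ and $0\in V$) contains some $K'\tr$ for a compact $K'\subseteq G$. Choose $m\in\N$ with $\T_m = [-\tfrac{1}{4m},\tfrac{1}{4m}]\subseteq V$, and set
\[
K' \;=\; K\cup 2K\cup\cdots\cup mK,
\]
which is compact as a finite union of continuous images of the compact set $K$. If $\chi\in K'\tr$, then for every $x\in K$ and every $j\in\{1,\dots,m\}$ we have $j\chi(x) = \chi(jx)\in \T_+$. The key arithmetic fact in $\T$ is that the simultaneous condition $j t \in \T_+$ for $j=1,\dots,m$ forces $t\in \T_m$: if $t\in [-\tfrac12,\tfrac12)$ satisfies $|t|>\tfrac{1}{4m}$, then choosing $j$ minimal with $|jt|>\tfrac14$ gives $|jt|\le |(j-1)t|+|t|<\tfrac14+\tfrac12=\tfrac34$, so $jt\bmod \Z$ falls in the open arc $(\tfrac14,\tfrac12]\cup[-\tfrac12,-\tfrac14)$, contradicting $jt\in\T_+$. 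Hence $\chi(x)\in\T_m\subseteq V$ for all $x\in K$, i.e.\ $\chi\in W(K,V)$. This shows $K'\tr\subseteq W(K,V)$, completing the proof.

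The one genuinely nontrivial step is the arithmetic lemma in $\T$ relating membership of $t,2t,\dots,mt$ in $\T_+$ to membership of $t$ in $\T_m$; the rest is unwinding definitions and the elementary observation that finite intersections of subbasic compact-open sets reduce to a single one by taking unions of the compacta and intersections of the opens.
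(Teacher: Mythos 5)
Your argument is correct and complete; note that the paper states this lemma without any proof (it only remarks later that the claim ``can be easily seen''), so there is no in-text argument to compare against. Both directions are handled properly: the inclusion $W(K,(-\tfrac14,\tfrac14))\subseteq K\tr$ shows each polar is a compact-open neighbourhood of the trivial character, and your construction $K'=K\cup 2K\cup\cdots\cup mK$ together with the arithmetic fact that $t,2t,\dots,mt\in\T_+$ forces $t\in\T_m$ gives the converse. That arithmetic fact is exactly the identity $\left(\frac{1}{m}\right)\T_+=\T_m$ which the paper itself uses later (in the definition preceding Lemma \ref{caracterizacion1/nU}), so your proof is consistent with the paper's toolkit. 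Two harmless imprecisions: the reduction of a basic neighbourhood $\bigcap_i W(K_i,V_i)$ to $W(\bigcup_i K_i,\bigcap_i V_i)$ is a containment, not an equality, but containment is all you need; and the strict bound $|t|<\tfrac12$ fails only at $t=-\tfrac12$, which is already excluded by the case $j=1$. Neither affects the validity of the proof.
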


\noindent It is straightforward to prove that the compact open topology on
  $G^\wedge$   is in fact  a group topology.

\begin{definition}
For a dual group $G^\wedge$, the inverse  polar of a subset
$S\subset G^\wedge$  is defined as $S^{\triangleleft}: = \{x\in
G\mid \chi(x)\in\T_+ \mbox{ for all }\chi\in S\}$.
\end{definition}

\begin{definition}
Let $(G,\tau)$ be a topological group. We define $G^{\wedge\wedge}=(G^\wedge)^\wedge$ and endow it with the corresponding compact-open topology. That is, the topology is defined by $\mathcal{U}_{G^{\wedge\wedge}}(1)=\{K\tr\mid K\subset G^\wedge \mbox{ compact}\}$.
\end{definition}

\chapter{Duality and locally quasi-convex groups}

\noindent Duality and locally quasi-convexity are two related notions. As we will see, for any topological group $(G,\tau)$, its dual is locally quasi-convex.

\section{General results on duality}

\noindent We now state the main definitions and elementary results concerning duality of topological groups.

\noindent Here, instead of considering $\T$ as a subgroup of $\C$ we are going to consider $\T$ as the quotient of $\R$ over $\Z$.

\noindent The torsion subgroup of $\T$ is $\T_t=\{\xi_n^k\mid k\in\N_0, n\in\N\}=\{e^{2\pi i\frac{k}{n}}\mid n\in\N, k\in\{0,\dots n-1\}\}=\{e^{2\pi iq}\mid q\in\Q\}\approx \Q/\Z$.

\begin{proposition}
If $\beta\in \R\setminus\Q$ then $L=\langle e^{2\pi i \beta}\rangle$ is a dense subgroup in $\T$
\end{proposition}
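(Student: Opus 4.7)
The plan is to invoke the classical Kronecker/Dirichlet-type argument: show that $L$ contains elements arbitrarily close to $0$ in $\T$, and then conclude that any subgroup of $\T$ with this property must be dense.

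First I would observe that $L = \{e^{2\pi i n \beta} \mid n \in \Z\}$ is a subgroup by construction, and that the irrationality of $\beta$ ensures all its elements are distinct: if $e^{2\pi i n \beta} = e^{2\pi i m\beta}$ with $n \neq m$, then $(n-m)\beta \in \Z$, contradicting $\beta \in \R \setminus \Q$. So $L$ is infinite.

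Next, using the identification $\T \approx [-1/2, 1/2)$, I would apply a pigeonhole argument. Fix an integer $N \geq 1$ and partition the fundamental domain $[0,1)$ into $N$ subintervals of length $1/N$. Among the $N+1$ representatives of $0, \beta, 2\beta, \ldots, N\beta$ modulo $1$, two must lie in the same subinterval; their difference $(n-m)\beta \mod 1$ (with $n > m$) then has distance less than $1/N$ from $0$ in $\T$. Thus for every $\eps > 0$, $L$ contains a nonzero element $g_\eps$ with $|g_\eps| < \eps$ in the circle metric.

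Finally, I would show that any subgroup of $\T$ containing nonzero elements of arbitrarily small modulus is dense: given an open arc $I \subset \T$ of length $\ell > 0$ and a target point $z \in I$, choose $\eps < \ell$ and a nonzero $g \in L$ with $|g| < \eps$; then the cyclic subgroup generated by $g$ is an $|g|$-net in $\T$, so some multiple of $g$ falls in $I$, proving density.

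The argument is essentially routine; the only subtle point is the final step, where one must verify cleanly that elements of modulus $< \eps$ in a subgroup force hits in every arc of length $> \eps$. I would expect this to be the mild technical obstacle, handled by noting that consecutive multiples of $g$ in $\T$ are at distance exactly $|g|$ apart, so they cannot skip over an arc of length strictly greater than $|g|$.
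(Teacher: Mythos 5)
Your proof is correct, but it follows a genuinely different route from the one in the paper. You use the classical Dirichlet pigeonhole argument entirely inside $\T$: the irrationality of $\beta$ makes $L$ infinite with all multiples $n\beta$ distinct mod $1$, the pigeonhole principle among $0,\beta,\dots,N\beta$ produces nonzero elements of $L$ within $1/N$ of $0$, and a subgroup containing arbitrarily small nonzero elements meets every arc because consecutive multiples of a small generator $g$ advance in steps of length $|g|$ and hence cannot skip an arc longer than $|g|$. The paper instead argues by contradiction through the covering map $p:\R\rightarrow\T$: if $L$ were not dense, then $p^{-1}(L)=\gp\langle\beta,1\rangle$ would be a closed proper subgroup of $\R$, and by the classification of closed subgroups of $\R$ (only $\{0\}$, $\R$, and copies of $c\Z$) this forces $1$ and $\beta$ to be rationally dependent, contradicting $\beta\notin\Q$. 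Your argument is more elementary and self-contained, requiring no structure theory of $\R$; the paper's argument is shorter once the classification of closed subgroups of $\R$ is available, and it fits the duality-theoretic toolkit used throughout the rest of the text. Both are complete proofs; your final step (that an $|g|$-net cannot miss an arc of length $>|g|$) is indeed the only point needing care, and you handle it correctly.
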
 

\begin{proof} Suppose that $L$ is a non dense subgroup of $\T$.

\noindent Consider the following mapping $\varphi:\Z\rightarrow\T$, where $n\mapsto e^{2\pi in\beta}$.

\noindent First, we show that $\varphi$ is injective. Suppose there exist $n,m\in\N$ such that $\varphi(n)=\varphi(m)$. This means that $e^{2\pi in\beta}=e^{2\pi im\beta}$. This implies that $n\beta-m\beta\in\Z$. Since $\beta\notin\Q$, we get that $n\beta-m\beta=0$; or equivalently, $n=m$.

\noindent Since, $\varphi$ is injective $L$ is an infinite subset of $\T$, and $p^{-1}(L)$ is not dense.

\noindent We consider now $p:\R\rightarrow \T$. $p^{-1}(L)=gp\langle \beta,1\rangle$. We know that $p^{-1}(L)$ is closed (proposition 19 in \cite{morris}).

\noindent Choose $a,b\in\R$. We see, now, that $gp\rangle a,b\langle$ is closed if and only if $a$ and $b$ are rationally dependent.

\noindent The only closed subgroups in $\R$ are $\R$, the empty set and those isomorphic to $\Z$ (proposition 20 in \cite{morris}). Hence $gp\langle a, b\rangle$ is closed implies $gp\langle a, b\rangle\approx c\Z$. Obviously, $a=z_1c$ and $b=z_2c$ and they are rationally dependent.

\noindent Conversely, suppose that $a=\frac{m}{n}b$, where mcd$(m,n)=1$. Hence therer exist $p,q$ such that $mp+nq=1$. Multiplying by $a$, we get $amp+anq=a$. Equivalently, $amp+mbq=a=m(ap+bq)$. Multiplying $mp+nq=1$ by b we get $b=n(ap+bq)$. Hence $gp\langle a,b\rangle=gp\langle ap+bq\rangle\approx\Z$, and as consequence closed.

\noindent Since our $p^{-1}(L)=gp\langle\beta,1\rangle$ is closed $1$ and $\beta$ are rationally dependent. Then $\beta\in\Q$, which contradicts the hypothesis.

\end{proof}


\noindent Another easy result is: 

\begin{proposition}
If $G$ has order $m$, then $G^\wedge$ has order $m$.
\end{proposition}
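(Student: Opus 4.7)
The plan is to reduce the statement to the cyclic case via the structure theorem for finite abelian groups, together with the multiplicativity of the dual under finite products.

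First I would observe that since $G$ is finite, its topology must be discrete (this is the only Hausdorff group topology on a finite group, and one may take it as implicit), so every group homomorphism $G\to\T$ is automatically continuous. Thus $G^\wedge$ coincides with $\Hom(G,\T)$ as an abstract group, and since every compact subset of $G$ is finite, the compact-open topology on $G^\wedge$ is also discrete; the statement therefore reduces to counting abstract homomorphisms $G\to\T$. Moreover, if $|G|=m$ then $mx=0$ for every $x\in G$, so $m\chi(x)=0$ for every character $\chi$; equivalently, the image of any $\chi$ lies in the subgroup $C_m$ of $m$-th roots of unity of $\T$. Hence $G^\wedge = \Hom(G,C_m)$.

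Next I would invoke the structure theorem: $G\cong \Z/n_1\Z\oplus\cdots\oplus\Z/n_k\Z$ with $n_1\cdots n_k=m$. The dual functor $\Hom(-,\T)$ turns finite direct sums into finite direct products, so
\[
G^\wedge \cong (\Z/n_1\Z)^\wedge\times\cdots\times(\Z/n_k\Z)^\wedge.
\]
It therefore suffices to handle the cyclic case: show that $(\Z/n\Z)^\wedge$ has exactly $n$ elements. For this, a character $\chi$ on $\Z/n\Z$ is determined by $\chi(1)$, and the only constraint is $n\chi(1)=0$ in $\T$, i.e.\ $\chi(1)\in C_n$. Conversely, every $n$-th root of unity gives a well-defined character. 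Hence $|(\Z/n\Z)^\wedge|=n$, and multiplying gives $|G^\wedge|=n_1\cdots n_k=m$.

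There is no real obstacle; the only delicate points are the implicit assumption that a finite topological group here carries the discrete topology (so that "order" in the statement really coincides with the cardinality of $\CHom(G,\T)$) and the invocation of the classification of finite abelian groups, both standard. If one wanted to avoid the structure theorem, an alternative route is to prove directly that for any short exact sequence $0\to H\to G\to G/H\to 0$ of finite abelian groups the dualized sequence $0\to (G/H)^\wedge\to G^\wedge \to H^\wedge\to 0$ is again exact (using divisibility of $\T$ for surjectivity on the right) and then induct on $|G|$; this is the step I would expect to require the most care, as it needs the injectivity of $\T$ as an abelian group.
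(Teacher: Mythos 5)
Your proof is correct, but it establishes a different (and stronger) statement than the one the paper actually proves, because the two of you read the word ``order'' differently. The paper's entire proof is the one-line computation $\chi^m(x)=\chi(x)\cdots\chi(x)=\chi(mx)=\chi(0)=1$, which shows only that every character of $G$ is annihilated by $m$; in other words, the paper is reading ``$G$ has order $m$'' as ``$mG=0$'' (exponent dividing $m$) and concluding the same for $G^\wedge$. It says nothing about cardinalities and uses no structure theory. You instead read ``order'' as cardinality and prove the genuinely stronger fact $|G^\wedge|=|G|=m$, reducing to the cyclic case via the classification of finite abelian groups and the compatibility of $\Hom(-,\T)$ with finite direct sums; your preliminary observations (a finite Hausdorff group is discrete, so $G^\wedge=\Hom(G,\T)$, and every character lands in the $m$-th roots of unity) are all sound, and your alternative route via exactness of the dualized short exact sequence, resting on the divisibility of $\T$, is also valid. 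What the paper's argument buys is brevity and complete elementarity -- it is a single identity -- at the cost of proving only an exponent statement; what yours buys is the actual equality of orders (indeed $G^\wedge\cong G$ for finite $G$), at the cost of invoking the structure theorem or the injectivity of $\T$. Both are correct for their respective readings, but be aware that your argument does considerably more work than the source intends at this point of the text.
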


\begin{proof}
$\chi^m(x)=\stackrel{(m)}{\chi(x)\cdots \chi(x)}=\chi(mx)= \chi(0)=1$ for all $\chi\in G^\wedge$.
\end{proof}

\noindent In $G^\wedge$ we consider the compact-open topology, $\tau_{CO}$. We define the compact-open topology in the following way: $V_{K,U}=\{\varphi\in\CHom(G,\T)\mid \varphi(K)\subset U\}$. We take as subbasis for the compact-open topology the family $V_{K,U}$ where $K\subset G$ is compact and $U$ is an open subset of $\T$.

\noindent It can be easily seen that the family $\{K^\triangleright\mid K\subset G\mbox{ is compact}\}$ is a neighbourhood basis at $1$ for $\tau_{CO}$.

\noindent In particular for $G=\R_u$, every compact $K\subset\R$ is contained in $[-n,n]$ for a suitable natural number, in addition, $[-n,n]^\triangleright \subset K^\triangleright$. Therefore a neighbourhood basis at the null character $1\in\R^\wedge$ is given by the sets: $[-n,n]\tr=\{\chi_t\mid \chi_t(x)\in\T_+$ $\forall \mid x\mid \leq n\}= \{\chi_t\mid e^{2\pi itx}\in\T_+$ $\forall \mid x\mid\leq n\}=[-\frac{1}{4n},\frac{1}{4n}]$.

\noindent Thus, $\R^\wedge$ can be algebraical and topologically identified with $(\R,\tau_u)$


\noindent We also know that $\T^\wedge=\Z$ and $\Z^\wedge=\T$


\noindent Clearly: a topological group is discrete if and only if $\{0\}$ is open.

\noindent The first statement about duality is:

\begin{proposition}
\noindent Let $K$ be a compact abelian topological group. Then $K^\wedge$ is discrete. 
\end{proposition}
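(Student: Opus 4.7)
The plan is to exhibit a singleton neighbourhood of the trivial character in $K^\wedge$, which immediately yields discreteness. Because $K$ is compact, $K$ is itself a compact subset of $K$, so the polar $K^\triangleright = \{\chi \in K^\wedge \mid \chi(K) \subset \T_+\}$ is a basic neighbourhood of the identity character for the compact-open topology on $K^\wedge$, by the lemma describing $\mathcal{U}_{G^\wedge}(1)$. Thus it suffices to prove that $K^\triangleright = \{1\}$, i.e.\ that the only continuous character $\chi$ with $\chi(K) \subset \T_+$ is the trivial one.

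For this I would use the key property of $\T$ that $\T_+ = [-\tfrac{1}{4},\tfrac{1}{4}]$ contains no nontrivial subgroup of $\T$. Fix any $\chi \in K^\triangleright$. Since $K$ is a group and $\chi$ is a homomorphism, $\chi(K)$ is a subgroup of $\T$, and by hypothesis $\chi(K) \subset \T_+$. So it is enough to prove the following claim: if $H \leq \T$ and $H \subset \T_+$, then $H = \{0\}$. Once this is established, $\chi(K) = \{0\}$, hence $\chi$ is the trivial character, and $K^\triangleright = \{1\}$.

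To prove the claim I would split into two cases according to the order of a putative nonzero element $h \in H$. If $h$ has infinite order, then $\langle h \rangle$ is a nontrivial subgroup of $\T$ generated by an element of irrational angle (in the $\R/\Z$ picture), hence by the first proposition of Chapter~2 it is dense in $\T$; but then $\langle h \rangle \not\subset \T_+$, a contradiction. If $h$ has finite order $n \geq 2$, then $\langle h \rangle$ is the cyclic group of $n$-th roots of $1$, which in the quotient representation contains an element of absolute value $\geq 1/n \cdot \lfloor n/2\rfloor \geq 1/4$ actually strictly exceeding $1/4$ for every $n \geq 2$ (the cases $n=2,3$ are checked directly and for $n\geq 4$ the element $\lfloor n/2\rfloor/n$ lies in $[1/3,1/2]$); so again $\langle h\rangle \not\subset \T_+$.

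The main subtlety is the claim that no nontrivial subgroup of $\T$ fits inside $\T_+$; everything else is routine once the compact-open description of neighbourhoods is in place. This ``no small subgroups'' property of $\T$ is exactly what forces compactness on one side to translate to discreteness on the dual side, and it is the only real input beyond the definitions.
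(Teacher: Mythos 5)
Your proof is correct and follows essentially the same route as the paper: identify $K^{\triangleright}$ as a basic neighbourhood of the trivial character and observe that a character image is a subgroup of $\T$ contained in $\T_+$, hence trivial. The only difference is that you actually prove the key fact that $\T_+$ contains no nontrivial subgroup (via the density/torsion dichotomy), whereas the paper simply asserts it; your case analysis of that claim is sound.
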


\begin{proof}
$K\tr=\{\chi\in K^\wedge\mid\chi(K)\subseteq\T_+\}$. Since $\chi(K)$ is a subgroup of $\T$, $\chi(K)\subset\T_+ \Longleftrightarrow \chi(K)=\{1\}$. Hence $K\tr=\{\chi\in K^\wedge\mid\chi(K)=\{1\}\}$. Thus, $K\tr$ consists only of the null character, $1$. Hence $\{1\}$ is open, and $K^\wedge$ is discrete. 
\end{proof}

\begin{proposition}
Let $(G,\tau_{dis})$ be a discrete topological group. Then $\Hom(G,\T)$ is a compact subgroup of $\T^G$.
\end{proposition}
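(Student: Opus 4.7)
The plan is to realize $\Hom(G,\T)$ as a closed subgroup of the compact product $\T^G$ and then invoke Tychonoff.

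First I would observe that since $G$ carries the discrete topology, every homomorphism $G\to\T$ is automatically continuous, so $\Hom(G,\T)=\CHom(G,\T)$ and this set sits naturally inside $\T^G$ via $\varphi\mapsto(\varphi(x))_{x\in G}$. The group operation on $\T^G$ is pointwise multiplication, and this embedding is clearly a group homomorphism, so $\Hom(G,\T)$ is an algebraic subgroup of $\T^G$. Endow $\T^G$ with the product (Tychonoff) topology; since $\T$ is compact Hausdorff, $\T^G$ is compact Hausdorff by Tychonoff's theorem.

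Next I would check that $\Hom(G,\T)$ is closed in $\T^G$. For each pair $(x,y)\in G\times G$, let
\[
A_{x,y}=\{f\in\T^G\mid f(x+y)=f(x)f(y)\}.
\]
The evaluation maps $\mathrm{ev}_z:\T^G\to\T$, $f\mapsto f(z)$, are continuous by definition of the product topology, and multiplication in $\T$ is continuous, so the map $f\mapsto f(x)f(y)\cdot f(x+y)^{-1}$ is a continuous map $\T^G\to\T$; hence $A_{x,y}$ is the preimage of the closed singleton $\{1\}\subset\T$ and is therefore closed. Then
\[
\Hom(G,\T)=\bigcap_{(x,y)\in G\times G}A_{x,y}
\]
is an intersection of closed sets, hence closed.

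Finally, a closed subset of a compact space is compact, so $\Hom(G,\T)$ is compact in the topology inherited from $\T^G$, and being an algebraic subgroup it is a compact subgroup. No step here looks like a real obstacle; the only point to be mildly careful about is the automatic continuity of characters on a discrete domain, which makes the identification $\Hom(G,\T)=\CHom(G,\T)$ legitimate so that the statement is meaningful in the duality framework introduced earlier.
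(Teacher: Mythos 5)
Your proof is correct and follows essentially the same route as the paper: compactness of $\T^G$ via Tychonoff, closedness of $\Hom(G,\T)$ in $\T^G$, and then closed-in-compact implies compact. The only cosmetic difference is that you verify closedness by writing $\Hom(G,\T)$ as an intersection of preimages of $\{1\}$ under continuous maps, whereas the paper uses a convergent net of characters; these are interchangeable arguments.
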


\begin{proof}
\noindent By Tychonoff theorem, we know that $\T^G$ is compact. Therefore, it suffices to prove that $\Hom(G,\tau)$ is a closed subgroup of $\T^G$.

\noindent In order to prove that $\Hom(G,\T)$ is closed in $\T^G$, we take $(\chi_j)_{j\in J}$ a net in $\Hom(G,\T)$ converging to $\chi:G\rightarrow \T$. We must see that $\chi$ is a homomorphism.

\noindent Let $x,y\in G$. It suffices to show that $\chi(x+y)=\chi(x)\chi(y)$. 

\noindent $\chi(x+y)=\lim_j\chi_j(x+y)=\lim_j(\chi_j(x)\chi_j(y))=\lim_j\chi_j(x)\lim_j\chi_j(y)=\chi(x)\chi(y)$

\noindent Hence $\Hom(G,\T)$ is closed.

\end{proof}

\begin{corollary}
Let $(G,\tau)$ be a discrete group. Then $(G,\tau)^\wedge$ is compact.
\end{corollary}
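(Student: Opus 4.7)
The plan is to deduce this directly from the preceding proposition, which gives that $\Hom(G,\T)$ is a compact subgroup of $\T^G$. The only gap to bridge is that the topology appearing in that proposition is the product topology (i.e.\ pointwise convergence), whereas by Definition \ref{definiciondual} the dual group $(G,\tau)^{\wedge}$ carries the compact-open topology. So the core task is to identify these two topologies on $\Hom(G,\T)$ under the hypothesis that $G$ is discrete.

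First I would observe that since $\tau=\tau_{dis}$ every homomorphism $G\to\T$ is automatically continuous, so as sets $(G,\tau)^{\wedge}=\CHom(G,\T)=\Hom(G,\T)$. Next I would point out that in a discrete space the compact subsets are precisely the finite subsets. Consequently a neighbourhood subbasis at the trivial character for the compact-open topology is given by sets of the form $V_{F,U}=\{\varphi\in\Hom(G,\T)\mid \varphi(F)\subseteq U\}$ with $F\subset G$ finite and $U$ open in $\T$; but this is exactly a subbasis for the topology of pointwise convergence, i.e.\ for the subspace topology induced by the product topology on $\T^G$.

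Having identified the two topologies, the conclusion is immediate: by the previous proposition $\Hom(G,\T)$ is a closed subgroup of $\T^G$, and $\T^G$ is compact by Tychonoff's theorem, so $\Hom(G,\T)$ is compact in the product topology and hence in the compact-open topology. Therefore $(G,\tau)^{\wedge}$ is a compact topological group.

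I do not expect any genuine obstacle here; the statement is essentially a repackaging of the preceding proposition. The only subtle point worth stating explicitly is the characterisation of compact subsets of a discrete group as the finite ones, since this is what forces the compact-open topology to collapse to pointwise convergence; everything else is routine.
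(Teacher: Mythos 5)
Your proof is correct and follows the same route as the paper, whose own proof is the single line ``Since $\Hom(G,\tau)=(G,\tau)^\wedge$ the result follows,'' resting entirely on the preceding proposition. The step you supply explicitly --- that for discrete $G$ the compact subsets are exactly the finite ones, so the compact-open topology on $\Hom(G,\T)$ coincides with the topology of pointwise convergence inherited from $\T^G$ --- is precisely the point the paper leaves implicit, and making it explicit is a genuine improvement rather than a deviation.
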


\begin{proof} 
\noindent Since $\Hom(G,\tau)=(G,\tau)^\wedge$ the result follows.

\end{proof}


\begin{definition}
The natural embedding $\alpha_G:G\rightarrow G^{\wedge\wedge}$, is defined by $x\mapsto\alpha_G(x):G^\wedge\rightarrow\T$, where $\chi\mapsto\chi(x)$
\end{definition}

\begin{proposition}
$\alpha_G$ is a homomorphism.
\end{proposition}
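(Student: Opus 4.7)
The plan is a direct unwinding of the definitions: to verify that $\alpha_G(x+y) = \alpha_G(x) \cdot \alpha_G(y)$ as elements of $G^{\wedge\wedge} \subseteq \T^{G^\wedge}$, it suffices to check the equality pointwise on $G^\wedge$. So I would fix $x,y \in G$ and an arbitrary $\chi \in G^\wedge$, then evaluate both sides.

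First I would compute the left-hand side by the definition of $\alpha_G$: $\alpha_G(x+y)(\chi) = \chi(x+y)$. Then, since $\chi$ is by construction a continuous homomorphism from $G$ to $\T$, I may use multiplicativity of $\chi$ to rewrite this as $\chi(x)\chi(y)$. Finally, again applying the definition of $\alpha_G$ in reverse, this equals $\alpha_G(x)(\chi) \cdot \alpha_G(y)(\chi)$, which by the pointwise group structure on $\CHom(G^\wedge, \T)$ is precisely $(\alpha_G(x) \cdot \alpha_G(y))(\chi)$. Since $\chi$ was arbitrary, the two elements of $G^{\wedge\wedge}$ agree, and $\alpha_G$ preserves the group operation.

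There is essentially no obstacle here: the only subtlety worth stating explicitly is that the group operation on $G^{\wedge\wedge}$ is the natural pointwise multiplication inherited from $\T$, exactly as was noted earlier in the text for $\CHom(G,\T)$. Once this is in place, the computation is a one-line chain of equalities. I would not bother checking that $\alpha_G(x)$ is actually a (continuous) character of $G^\wedge$ in this proof, since that is a separate statement; here we need only that $\alpha_G$ is a group homomorphism into the abstract group $\CHom(G^\wedge, \T)$.
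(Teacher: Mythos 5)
Your proof is correct and follows exactly the same route as the paper's: evaluate $\alpha_G(x+y)$ at an arbitrary $\chi\in G^\wedge$, use that $\chi$ is a homomorphism, and conclude pointwise. The remark that one need not verify here that $\alpha_G(x)$ is a continuous character is a reasonable clarification, but the core computation is identical.
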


\begin{proof}
$\alpha_G(x+y)(\chi)=\chi(x+y)=\chi(x)\chi(y)=\alpha_G(x)(\chi)\alpha_G(y)(\chi)= (\alpha_G(x)\alpha_G(y))(\chi)$, for all $\chi\in G^\wedge$.

\noindent Thus, $\alpha_G(x+y)=\alpha_G(x)\alpha_G(y)$

\end{proof}

\begin{definition}
A topological group $(G,\tau)$ is reflexive if $\alpha_G$ is a topological isomorphism
\end{definition}

\begin{proposition}
The canonical mapping $\alpha_G$ is injective if and only if $G^\wedge$ separates points; that is, for every $x\neq 0$, there exists $\chi\in G^\wedge$ such that $\chi(x)\neq 1$.
\end{proposition}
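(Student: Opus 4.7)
The plan is to reduce injectivity of $\alpha_G$ to the triviality of its kernel, and then unpack what membership in the kernel means in terms of the characters. Since the preceding proposition establishes that $\alpha_G$ is a group homomorphism, I can use the standard fact that a homomorphism is injective if and only if its kernel is trivial.

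First, I would explicitly compute $\ker\alpha_G$. By the definition of $\alpha_G$, an element $x\in G$ lies in $\ker\alpha_G$ precisely when $\alpha_G(x)$ is the neutral element of $G^{\wedge\wedge}$, i.e., the constant character $1$ on $G^\wedge$. Unwinding, this means $\alpha_G(x)(\chi)=\chi(x)=1$ for every $\chi\in G^\wedge$. Therefore
\[
\ker\alpha_G=\{x\in G\mid \chi(x)=1\text{ for all }\chi\in G^\wedge\}.
\]

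From this description the equivalence is essentially immediate. If $\alpha_G$ is injective then $\ker\alpha_G=\{0\}$, so for any $x\neq 0$ the condition ``$\chi(x)=1$ for all $\chi\in G^\wedge$'' fails, which provides some $\chi\in G^\wedge$ with $\chi(x)\neq 1$; that is, $G^\wedge$ separates points. Conversely, if $G^\wedge$ separates points, then for every $x\neq 0$ there is $\chi\in G^\wedge$ with $\chi(x)\neq 1$, so $x\notin\ker\alpha_G$; hence $\ker\alpha_G=\{0\}$ and $\alpha_G$ is injective.

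There is no real obstacle here; the entire argument is a direct translation of the defining formula $\alpha_G(x)(\chi)=\chi(x)$. The only point worth being careful about is distinguishing the neutral element $1\in\T$ (target of characters) from the neutral element of $G^{\wedge\wedge}$ (the constant character $\chi\mapsto 1$), so that the statement ``$\alpha_G(x)=1_{G^{\wedge\wedge}}$'' is correctly interpreted pointwise.
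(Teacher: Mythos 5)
Your argument is correct and complete: reducing injectivity of the homomorphism $\alpha_G$ to triviality of its kernel and then unwinding $\alpha_G(x)(\chi)=\chi(x)$ is exactly the intended verification. The paper in fact states this proposition without proof (it is one of the "simple verifications" omitted in the preliminaries), so your write-up supplies the standard argument the author had in mind, including the right caution about distinguishing $1\in\T$ from the neutral element of $G^{\wedge\wedge}$.
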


\noindent The following highly non-trivial assertion was proved by Weyl and is the corner-stone for the duality Theorem of Pontryagin-Van Kampen.

\begin{theorem}
Let $G$ be a compact abelian topological group, then $\alpha_G$ is injective.
\end{theorem}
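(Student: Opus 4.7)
\medskip

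\noindent\emph{Proof plan.} By the preceding proposition, $\alpha_G$ is injective if and only if $G^\wedge$ separates points, so given $x\neq 0$ in $G$ it suffices to produce $\chi\in G^\wedge$ with $\chi(x)\neq 1$. The strategy is the Peter--Weyl argument specialized to the abelian setting: the normalized Haar measure on $G$ lets one build a compact self-adjoint operator on $L^2(G)$ whose spectral decomposition forces the existence of enough continuous characters.

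\medskip

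\noindent The main steps would run as follows. First, equip $G$ with its normalized Haar measure $\mu$ and consider the Hilbert space $L^2(G,\mu)$; since $G$ is abelian, the translations $\tau_t\colon f\mapsto f(\,\cdot\,-t)$ form a commuting family of unitary operators on it. Second, pick a continuous $F\colon G\to\R$ with $F(0)\neq 0$ supported in a neighbourhood $U$ of $0$ small enough that $U\cap(U-x)=\emptyset$, and form $h=F*\widetilde F$ with $\widetilde F(y)=\overline{F(-y)}$; then $h$ is continuous and hermitian (so that the convolution operator $T_h g=h*g$ is self-adjoint), compact on $L^2(G,\mu)$, and commutes with every $\tau_t$. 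Moreover, by construction $h(0)=\|F\|_2^2>0$ whereas $h(x)=\int F(s+x)\overline{F(s)}\,d\mu(s)=0$, so $h(0)\neq h(x)$. Third, the spectral theorem for compact self-adjoint operators gives $L^2(G,\mu)=\ker T_h\oplus\bigoplus_{\lambda\neq 0}E_\lambda$ with each eigenspace $E_\lambda$ finite-dimensional and $\tau_t$-invariant. Fourth, on each $E_\lambda$ the commuting family $\{\tau_t\}_{t\in G}$ of unitary operators admits a simultaneous diagonalization, yielding one-dimensional joint eigenspaces; a unit joint eigenvector $v$ then satisfies $\tau_t v=\chi(t)v$ for a map $\chi\colon G\to\T$ which is multiplicative because $\tau_{t+s}=\tau_t\tau_s$ and continuous because $v$, lying in the image of $T_h$, has a continuous representative, so that $\chi(t)=v(y_0-t)/v(y_0)$ at any $y_0$ with $v(y_0)\neq 0$.

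\medskip

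\noindent Since $h$ is hermitian, one has $\langle g,h\rangle=(T_h g)(0)=0$ for every $g\in\ker T_h$, whence $h\perp\ker T_h$, so $h$ belongs to the closed span of the joint eigenvectors constructed above, i.e.\ to the closed span in $L^2(G,\mu)$ of continuous characters. If every continuous character satisfied $\chi(x)=1$, then $\tau_x\chi=\chi$ for all $\chi\in G^\wedge$, hence $\tau_x h=h$ by continuity of $\tau_x$; this yields $h(y-x)=h(y)$ for every $y\in G$, and specialization to $y=x$ gives $h(0)=h(x)$, contradicting the choice of $F$. So some continuous character must satisfy $\chi(x)\neq 1$, proving injectivity of $\alpha_G$.

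\medskip

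\noindent The crux lies in steps three and four: verifying compactness and self-adjointness of $T_h$ on $L^2(G,\mu)$, and then carrying out the simultaneous diagonalization of $\{\tau_t\}_{t\in G}$ on a finite-dimensional eigenspace carefully enough to extract a bona fide \emph{continuous} character rather than merely a measurable homomorphism of $G$ into $\T$. Once this spectral-theoretic core is in place, the remaining separation argument reduces to the short computation above.
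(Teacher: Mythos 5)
The paper offers no proof of this theorem at all: it is stated as a known result of Weyl, the ``corner-stone'' for Pontryagin--van Kampen duality, so there is nothing internal to compare your argument against. What you have written is the classical Peter--Weyl argument specialized to the abelian case, and the outline is sound: the identity $h(y)=\int F(y+s)\overline{F(s)}\,d\mu(s)$ does give $h(0)=\|F\|_2^2>0$ and $h(x)=0$ under $U\cap(U-x)=\emptyset$; the kernel $K(y,s)=h(y-s)$ is continuous on the compact space $G\times G$, hence Hilbert--Schmidt, which supplies the compactness you defer; hermitianness of $h$ gives self-adjointness and also the computation $\langle g,h\rangle=(h*g)(0)$ behind the claim $h\perp\ker T_h$; and the continuity of a joint eigenvector $v=\lambda^{-1}h*v$ correctly upgrades $\chi$ from a measurable to a continuous character. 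Two caveats. First, your text is explicitly a plan: existence and translation-invariance of Haar measure, the Hilbert--Schmidt estimate, the simultaneous diagonalization, and the passage from almost-everywhere to everywhere identities (needed both for $\chi(t)=v(y_0-t)/v(y_0)$ and for the final step $\tau_xh=h$ in $L^2$ implying $h(0)=h(x)$ pointwise, which uses that Haar measure has full support and that both sides are continuous) are asserted rather than carried out, as you yourself acknowledge; as submitted this is a correct roadmap, not a finished proof. Second, the construction of $U$ with $U\cap(U-x)=\emptyset$ and of a continuous $F$ supported in $U$ with $F(0)\neq 0$ silently uses that $G$ is Hausdorff (hence completely regular); without that hypothesis the theorem is false, so you should state it. None of these points invalidates the approach --- every deferred step is standard and goes through --- but they are exactly where the ``highly non-trivial'' content that the paper attributes to Weyl lives.
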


\noindent An important tool in our subsequent work is the Pontryagin-Van Kampen theorem, which states the following.

\begin{theorem} [Pontryagin-Van Kampen]\label{PVK} Let $(G,\tau)$ be a locally compact abelian topological group. Then $G$ is reflexive.
\end{theorem}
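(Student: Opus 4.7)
The plan is to combine the structure theorem for locally compact abelian (LCA) groups with direct verification on a handful of elementary building blocks. I split the task of showing that $\alpha_G$ is a topological isomorphism into three claims: (i) $\alpha_G$ is a continuous homomorphism, (ii) $\alpha_G$ is injective, and (iii) $\alpha_G$ is surjective and open onto $G\we\we$. Claim (i) is formal and has essentially been carried out already in the excerpt. For (ii) I would show that $G\we$ separates points of $G$: given $x \neq 0$, pick a compact neighborhood $U$ of $0$ missing $x$, replace $G$ by the compactly generated open subgroup generated by $U \cup \{x\}$ (both reflexivity and point-separation pass between $G$ and its open subgroups), apply the structure theorem to write this subgroup as $\R^n \times \Z^m \times K$ with $K$ compact, and separate points on each factor using the identifications $\R\we \cong \R$, $\Z\we \cong \T$, $\T\we \cong \Z$ from the excerpt together with Weyl's theorem for the compact factor $K$.

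For the elementary building blocks the excerpt essentially settles that $\alpha_\R$, $\alpha_\Z$, and $\alpha_\T$ are topological isomorphisms via those identifications. Finite cyclic groups reduce to the direct computation $(\Z/n\Z)\we \cong \Z/n\Z$, and the cases $\R^n$, $\T^n$ and finite products of all of the above follow because finite products of reflexive groups are reflexive: the compact-open topology on a finite product dual factors through the product of the duals.

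The substantive content of claim (iii) rests on two further pillars. First, the structure theorem for LCA groups furnishes an open subgroup $H \cong \R^n \times K$ of $G$ with $K$ compact. Second, duality behaves well under open subgroups and discrete quotients: from the short exact sequence $0 \to H \to G \to G/H \to 0$, with $G/H$ discrete, one shows $G$ is reflexive iff $H$ is. Combined with reflexivity of $\R^n$, everything hinges on the \emph{compact} case. For compact $K$, the dual $K\we$ is discrete, so $K\we\we$ is compact; $\alpha_K$ is continuous and, by Weyl, injective; its image is a compact (hence closed) subgroup of $K\we\we$ that separates the points of $K\we$; by a Stone--Weierstrass or Peter--Weyl density argument, this image is all of $K\we\we$; and a continuous bijection from a compact group onto a Hausdorff group is automatically a homeomorphism, closing the argument.

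The two steps I expect to be the technical bottlenecks are the LCA structure theorem itself, which is decidedly non-elementary and would typically be imported from a reference such as Hewitt--Ross, and the density argument in the compact case, which is the technical heart of Pontryagin--Van Kampen. Once these are in hand, the remainder of the reduction is essentially bookkeeping on short exact sequences supported by the elementary identifications already recorded in the excerpt.
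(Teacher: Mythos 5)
The paper does not prove this theorem at all: it is stated as a classical tool and imported from the literature (the introduction attributes it to Pontryagin and Van Kampen, with \cite{morris} and \cite{hewitt-ross} among the references), so there is no in-paper argument to compare yours against. Your outline is the standard textbook reduction --- continuity and injectivity of $\alpha_G$ first, then the structure theorem to produce an open subgroup $\R^n\times K$, the lemma that reflexivity passes across an open subgroup with discrete quotient, and finally the compact case --- and as a roadmap it is sound; the identifications $\R^\wedge\cong\R$, $\Z^\wedge\cong\T$, $\T^\wedge\cong\Z$ and the compact/discrete duality from the excerpt do cover the building blocks, and finite products behave as you say. One caveat worth making explicit: in the compact case your density step is delicate. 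To conclude that a character of $K^{\wedge\wedge}$ vanishing on the closed subgroup $\alpha_K(K)$ is trivial, you need to know that the characters of $K^{\wedge\wedge}$ are exactly the evaluations at elements of $K^\wedge$ --- which is surjectivity of $\alpha_{K^\wedge}$ for the \emph{discrete} group $K^\wedge$, i.e.\ another instance of the theorem you are proving. The classical fix is to run the compact and discrete cases in tandem, or to argue via orthogonality of characters in $L^2(K)$ (Peter--Weyl) rather than through $(K^{\wedge\wedge})^\wedge$; as written, ``Stone--Weierstrass or Peter--Weyl density'' papers over this circularity risk. With that step done carefully, and the structure theorem and open-subgroup lemma imported from Hewitt--Ross or Morris as you propose, the argument closes.
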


\section{On the continuity of $\alpha_G$ and equicontinuity}

\noindent The general definition of equicontinuity in the framework of uniform spaces and continuous mappings can be restated in a simpler form in the particular case of topological groups and continuous homomorphisms as we do next:

\begin{definition}
Let $S\subset G^\wedge$, $S$ is equicontinuous if for every $W\in\mathcal{U}_\T(0)$ there exists $V\in\mathcal{U}_G(0)$ such that $\varphi(V)\subset W$ for every $\varphi\in S$.
\end{definition}

\noindent $\alpha_G$ is continuous if and only if for every compact $K\subset G^\wedge$, $\alpha_G^{-1}(K\tr)$ is a neighbourhood of $0$ in $G$.

\noindent $\alpha_G^{-1}(K\tr)=\{x\in G\mid \alpha_G(x)\in K\tr\}=\{x\in G\mid\alpha_G(x)(\chi)\in\T_+,\ \forall\chi\in K\}=\{x\in G\mid\chi(x)\in\T_+,\ \forall\chi\in K\}$.

\noindent We have that $\alpha_G$ is continuous if and only if for every $K\subset G^\wedge$ compact, there exists $U\in\mathcal{U} _G(0)$ such that  $U\subset\{x\mid \chi(x)\in\T_+,\ \forall\chi\in K\}$

\noindent That is to say, every compact of $G^\wedge$ is equicontinuous.

\noindent A further simplification is given in the following proposition.

\begin{proposition}
$S\subset G^\wedge$ is equicontinuous if there exists $U\in\mathcal{U}_G(0)$ such that for every $x\in U$ and for every $\chi \in S$, $\chi(x)\in\T_+$, that is $S\subset U^\triangleright$. 
\end{proposition}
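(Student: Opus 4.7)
The plan is to unfold the definition of equicontinuity and produce, for each neighbourhood $W$ of $0$ in $\T$, a neighbourhood $V$ of $0$ in $G$ such that $\chi(V) \subset W$ for all $\chi \in S$. The route is: fix $W$, approximate it from inside by one of the sets $\T_m = [-\frac{1}{4m},\frac{1}{4m}]$ (which form a neighbourhood basis at $0$ in $\T$), then shrink $U$ enough that $\chi(V)$ is trapped in $\T_m$ using only the hypothesis $\chi(U) \subset \T_+$.

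First I would fix $W \in \mathcal{U}_\T(0)$ and choose $m \in \N$ with $\T_m \subset W$. Next, using continuity of addition in $G$ (iterating Lemma \ref{ctg}(iii)), I would pick $V \in \mathcal{U}_G(0)$ satisfying $V + V + \cdots + V \subset U$ with at least $m$ summands. Then for any $x \in V$ and any integer $k$ with $1 \leq k \leq m$, the element $kx$ lies in $U$, so by hypothesis $\chi(kx) = k\chi(x) \in \T_+$ for every $\chi \in S$.

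The main step, and the only non-formal part of the argument, is the elementary observation: if $t \in \T$ satisfies $kt \in \T_+$ for all $k = 1,\dots,m$, then $t \in \T_m$. I would justify this by representing $\T$ as $[-\frac12, \frac12)$ and arguing by contradiction: if $|t| > \frac{1}{4m}$, let $k_0$ be the smallest integer in $\{1,\dots,m\}$ for which the representative of $k_0 t$ in $[-\frac12,\frac12)$ falls outside $[-\frac14,\frac14]$. The minimality of $k_0$ together with $|t| \leq \frac14$ forces $k_0 t$ (in absolute value, as a real number before reduction) to lie in the "forbidden" band $(\frac14, \frac34)$, so its class modulo $\Z$ is not in $\T_+$, contradicting the hypothesis on $k_0 t$.

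Applying this to $t = \chi(x)$ yields $\chi(x) \in \T_m \subset W$ for every $x \in V$ and every $\chi \in S$, which is exactly equicontinuity. The only genuine obstacle is the "no wrap-around" lemma of the previous paragraph; everything else is a direct rewriting of definitions together with the standard iterated-division trick available in any topological group.
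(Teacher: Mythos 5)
Your proposal is correct and follows essentially the same route as the paper's proof: iterate condition (iii) of Lemma \ref{ctg} to find $V$ with an $m$-fold sum contained in $U$, deduce $\chi(kx)\in\T_+$ for $1\leq k\leq m$ from $S\subset U^\triangleright$, and conclude $\chi(x)\in\T_m$. The only difference is that you explicitly justify the arithmetic step ``$kt\in\T_+$ for $k=1,\dots,m$ implies $t\in\T_m$'' (correctly), whereas the paper asserts it without proof.
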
 

\begin{proof}
\noindent Let $[-\frac{1}{4n},\frac{1}{4n}]+\Z$ be a neighbourhood of $0+\Z$ in $\T$.

\noindent By \ref{ctg} (iii), there exists $V$ where $V+\stackrel{(n)}{\cdots} +V\subset U$. Hence, $\chi(V+\stackrel{(n)}{\cdots}+V)\subset\chi(U)\subset\T_+$.

\noindent Choose $x\in V$; $\chi(x),\chi(2x),\dots ,\chi(nx)\in\T_+\Rightarrow \chi(x)\in[-\frac{1}{4n},\frac{1}{4n}]+\Z\Rightarrow \chi(V)\subset [-\frac{1}{4n},\frac{1}{4n}]+\Z$.

\noindent $U\tr\supset S\Longleftrightarrow \forall n,\ \exists V$ such that $\forall\chi\in S,\ \forall x\in V$ $\chi(x)\in[-\frac{1}{4n},\frac{1}{4n}]+\Z$

\end{proof}

\begin{proposition}
If $G$ is metrizable, then $\alpha_G$ is continuous.
\end{proposition}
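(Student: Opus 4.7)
The plan is to invoke the discussion just preceding the proposition: $\alpha_G$ is continuous if and only if every compact subset $K \subset G^\wedge$ is equicontinuous, which by the previous proposition amounts to $K \subset U\tr$ for some $U \in \mathcal{U}_G(0)$. So I fix an arbitrary compact $K \subset G^\wedge$ and aim to produce such a $U$.

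I argue by contradiction. Assume no such $U$ exists. Metrizability of $G$ lets me fix a decreasing countable neighbourhood basis $(U_n)_{n \in \N}$ at $0$, and the failure of equicontinuity yields, for every $n$, a character $\chi_n \in K$ together with a point $x_n \in U_n$ such that $\chi_n(x_n) \notin \T_+$. Because $U_n$ decreases to $\{0\}$, the sequence $(x_n)$ converges to $0$, so the set $C := \{0\} \cup \{x_n : n \in \N\}$ is compact in $G$.

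By compactness of $K$, the net $(\chi_n)$ has a subnet $(\chi_{h(\beta)})_{\beta \in B}$ converging to some $\chi \in K$. The compact-open topology on $G^\wedge$ coincides with uniform convergence on compact subsets of $G$, hence $\chi_{h(\beta)} \to \chi$ uniformly on $C$. I then pick a symmetric neighbourhood $V$ of $0$ in $\T$ with $V + V \subset \T_+$. Continuity of the limit $\chi$ together with $x_m \to 0$ gives an index $m_0$ with $\chi(x_m) \in V$ for every $m \geq m_0$, while the uniform convergence supplies $\beta_0 \in B$ with $\chi_{h(\beta)}(x) - \chi(x) \in V$ for all $x \in C$ and all $\beta \geq \beta_0$. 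Choosing $\beta$ large enough that in addition $h(\beta) \geq m_0$, both estimates apply at $x = x_{h(\beta)}$, so $\chi_{h(\beta)}(x_{h(\beta)}) \in V + V \subset \T_+$, contradicting the choice $\chi_n(x_n) \notin \T_+$.

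The main obstacle is the passage to a subnet: $G^\wedge$ need not be first countable, so I cannot replace the subnet by an ordinary subsequence and must genuinely argue at the level of nets. The careful point is using the cofinality of the indexing map $h:B \to \N$, which ensures $h(\beta) \geq m_0$ eventually, so that the estimate \emph{$\chi(x_m) \in V$ for $m \geq m_0$} can be invoked along the subnet at the diagonal index $m = h(\beta)$ to close the argument.
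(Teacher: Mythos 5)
The paper states this proposition without any proof, so there is nothing to compare against line by line; your argument is correct and is exactly the one the surrounding text sets up (reduce continuity of $\alpha_G$ to equicontinuity of compact subsets of $G^\wedge$, then use a countable decreasing neighbourhood basis $(U_n)$ to extract $\chi_n\in K$ and $x_n\in U_n$ with $\chi_n(x_n)\notin\T_+$). The two points that genuinely need care are the ones you flag: that the compact-open topology on $G^\wedge$ agrees with uniform convergence on the compact set $C=\{0\}\cup\{x_n\}$, and that the cofinality of the subnet map $h$ lets you reach an index with $h(\beta)\geq m_0$ while staying past $\beta_0$; both are handled correctly. For the record, one can avoid subnets altogether: take the neighbourhood $N=\{\chi\in G^\wedge\mid \chi(C)\subset\T_2\}$ of $0$ in $G^\wedge$, cover the compact $K$ by finitely many translates $\chi^{(1)}+N,\dots,\chi^{(r)}+N$, choose $n_i$ with $\chi^{(i)}(U_{n_i})\subset\T_2$, and observe that for $n\geq\max_i n_i$ one gets $\chi_n(x_n)=\chi^{(i)}(x_n)+(\chi_n-\chi^{(i)})(x_n)\in\T_2+\T_2=\T_+$, the same contradiction. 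That variant is slightly more elementary (no nets, no uniform-convergence identification), but yours is equally valid and arguably closer in spirit to the equicontinuity discussion preceding the proposition.
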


\section{Introduction to locally quasi-convex groups}

\noindent The notion of convexity is one of the most fruitful tools in Mathematics. A convex set is defined only in the context of vector spaces in a pure algebraic mode.

\noindent For a topological group $G$ the notion of convexity is not available, since there is no scalar multiplication. However an analogous definition could be obtained modeled in the separation theorem, which is obtained as a corollary to Hahn-Banach theorem, which states the following: let $E$ be a locally convex topological vector space. A subset $M\subset E$ is convex if for every $x\in E\setminus M$, there exists a continuous linear mapping $\varphi$ such that $\varphi (x)>\varphi(y)$ for every $y\in M$.

\noindent For a topological group $(G,\tau)$ the analogous definition would rely on  continuous homomorphisms, instead of linear forms.

\noindent However, the dualizing object cannot be $\R$ as we make evident in the following paragraph: 

\noindent Let $G$ be a compact group and let $\varphi:G\rightarrow\R$ be a continuous group homomorphism. Since $G$ is compact and $\varphi$ is a continuous homomorphism, $\varphi(G)$ is a compact subgroup of $\R$. Hence $\varphi(G)=\{0\}$.

\noindent We must choose another group instead of $\R$. In 25.36 of \cite{hewitt-ross} it is proved that for results on duality, this group must be $\T$. Hence we pick characters (that is continuous group homomorphisms into $\T$) $\chi:G\rightarrow \T$, instead of continuous linear forms.

\noindent Now, there exists no order in $\T$, so it has no sense to state $\chi(x)<\chi(y)$. The "modified definition" given by Vilenkin in the 50's, see \cite{vilenkin}, goes as follows:

\begin{definition}
Let $(G,\tau)$ be a topological group. A subset $S$ is quasi-convex if for every $x\in G\setminus S$ there exists $\chi\in G^\wedge$ such that $\chi(S)\subset\T_+$ and $\chi(x)\notin\T_+$, where $G^\wedge=\{\chi\mid \chi:G\rightarrow \T\mbox{ continuous}\}$ and $\T_+=\T \cap \{Re(z)\geq 0\}$.
\end{definition}

\begin{definition}
A topological group $G$ is said to be locally quasi-convex if the neutral element $e_G$ has a neighbourhood basis formed by quasi-convex sets.
\end{definition}

%



\section{Properties of quasi-convex sets}

\noindent In this section we prove some properties of quasi-convex sets, and also we prove that the dual of any topological group is locally quasi-convex. This fact will be a easy prove of the fact that any reflexive toplogical group is locally quasi-convex.

\begin{properties}

\noindent Let $(G,\tau)$ be a topological group and $A\subset G$ be a quasi-convex set, then:

(i) $A$ is symmetric. 

(ii) $0\in A$.

(iii) $\displaystyle{A=\bigcap_{\chi\in A\tr}\chi^{-1}(\T_+)=A^{\triangleright\triangleleft}}$

(iv) $A$ is closed.

\end{properties}

\begin{proposition}
Let $(G,\tau)$ be a topological group and let $M\subset G$. Then $M\tr$ is quasi-convex
\end{proposition}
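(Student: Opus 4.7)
The plan is to exhibit, for every $\chi_0 \in G^\wedge \setminus M^\triangleright$, an explicit character of $G^\wedge$ that separates $\chi_0$ from $M^\triangleright$ in the sense required by the definition of quasi-convexity. The natural candidates live among the evaluation maps $\alpha_G(x) : G^\wedge \to \T$, $\chi \mapsto \chi(x)$, for $x$ ranging over $M$.

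First I would unpack the hypothesis $\chi_0 \notin M^\triangleright$: by definition of the polar, this means there exists some $x_0 \in M$ with $\chi_0(x_0) \notin \T_+$. Fix such an $x_0$. I would then check that $\alpha_G(x_0)$ is a genuine element of $(G^\wedge)^\wedge$; this is immediate because $\{x_0\}$ is compact in $G$, so evaluation at $x_0$ is continuous with respect to the compact-open topology on $G^\wedge$ (and it is clearly a homomorphism into $\T$).

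Next I would verify the two defining conditions. For any $\chi \in M^\triangleright$ we have $\chi(M) \subset \T_+$, and in particular $\alpha_G(x_0)(\chi) = \chi(x_0) \in \T_+$; thus $\alpha_G(x_0) \in (M^\triangleright)^\triangleright$, which gives $\alpha_G(x_0)(M^\triangleright) \subset \T_+$. On the other hand, $\alpha_G(x_0)(\chi_0) = \chi_0(x_0) \notin \T_+$ by the choice of $x_0$. Hence $\alpha_G(x_0)$ witnesses the quasi-convexity requirement for the arbitrarily chosen $\chi_0$.

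There is no real obstacle here: the statement is essentially a tautology once one notices that polars automatically contain the evaluations at the points of $M$. The only point requiring any care is making sure that we are testing quasi-convexity with characters of $G^\wedge$ (elements of $G^{\wedge\wedge}$) rather than with elements of $G$ itself, and this is handled by invoking $\alpha_G$. If desired, one can phrase the whole argument compactly as the inclusion $\alpha_G(M) \subset (M^\triangleright)^\triangleright$, from which $M^\triangleright = M^{\triangleright\triangleleft\triangleright}$ (via property (iii) applied in $G^\wedge$) follows and yields quasi-convexity directly.
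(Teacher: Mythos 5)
Your argument is correct and is essentially the same as the paper's: both take $\chi_0\notin M\tr$, extract $x_0\in M$ with $\chi_0(x_0)\notin\T_+$, and use the evaluation $\alpha_G(x_0)\in G^{\wedge\wedge}$ as the separating character. Your additional remark that $\alpha_G(x_0)$ is continuous on $(G^\wedge,\tau_{CO})$ because $\{x_0\}$ is compact is a worthwhile detail the paper leaves implicit.
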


\begin{proof}
Take $\chi_0\in G^\wedge\setminus M\tr$; that is, $\exists x\in M$ such that $\chi_0(x)\notin\T_+$. $\alpha_G(x)\in G^{\wedge\wedge}$, and for all $\chi\in M\tr$ we have $\alpha_G(x)(\chi)=\chi(x)\in\T_+$ but $\alpha_G(x)(\chi_0)=\chi_0(x)\notin\T_+$

\end{proof}

\begin{proposition}
\noindent Let $(G,\tau)$ be a topological group; then $(G^\wedge,\tau_{CO})$ is locally quasi-convex.
\end{proposition}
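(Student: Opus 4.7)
The plan is to combine two facts that have already been established in the preceding material and observe that together they give the statement essentially for free. To show $(G^\wedge,\tau_{CO})$ is locally quasi-convex, I need to exhibit a neighbourhood basis at the identity character $1\in G^\wedge$ consisting of quasi-convex subsets of $G^\wedge$.

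First, I would recall the description of the compact-open topology given earlier in the excerpt: the family
\[
\mathcal{U}_{G^\wedge}(1)=\{K\tr \mid K\subset G\text{ compact}\}
\]
is a neighbourhood basis at $1$ for $\tau_{CO}$. So it suffices to show that each $K\tr$, with $K\subset G$ compact, is quasi-convex as a subset of $G^\wedge$.

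Next, I would invoke the previous proposition, which states that for any topological group $(G,\tau)$ and any subset $M\subset G$, the polar $M\tr\subset G^\wedge$ is quasi-convex. Applying this with $M=K$ for each compact $K\subset G$ gives immediately that every member of the basis $\mathcal{U}_{G^\wedge}(1)$ is quasi-convex. Hence $1\in G^\wedge$ has a neighbourhood basis of quasi-convex sets, and by definition $(G^\wedge,\tau_{CO})$ is locally quasi-convex.

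There is essentially no obstacle here: the work has already been done in the two preceding results, and the proof amounts to noting that (i) polars of compact sets form a basis of $\tau_{CO}$-neighbourhoods of $1$, and (ii) polars are automatically quasi-convex. The only subtlety worth mentioning, if one wishes to be pedantic, is that the quasi-convexity of $K\tr$ is computed using characters on $G^\wedge$, and the argument behind the previous proposition used evaluation characters $\alpha_G(x)\in G^{\wedge\wedge}$ arising from points $x\in K$; no compactness of $K$ or even continuity hypothesis beyond what was already assumed is needed.
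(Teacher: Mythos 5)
Your proposal is correct and follows exactly the same route as the paper: cite that $\{K\tr \mid K\subset G\ \text{compact}\}$ is a neighbourhood basis at $1$ for $\tau_{CO}$, and apply the preceding proposition that the polar of any subset of $G$ is quasi-convex. Nothing is missing.
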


\begin{proof}
We remember that the family $\mathcal{K}=\{K\tr\mid K \mbox{ is compact}\}$ is a neighbourhood basis for $\tau_{CO}$. 

\noindent We have just proved that the polar of any subset is quasi-convex, in particular $\mathcal{K}$ is a neighbourhood basis formed by quasi-convex sets.

\noindent Hence, $(G^\wedge,\tau_{CO})$ is locally quasi-convex.

\end{proof}

\begin{example}
\noindent $\R$ is locally quasi-convex, since $[-\frac{1}{n},\frac{1}{n}]$ is quasi-convex for all $n$.

\noindent $\T$ is locally quasi-convex, since $\{[-\frac{1}{4n},\frac{1}{4n}]+\Z\mid n\in\N\}$ is a quasi-convex neighbourhood basis for $0$ in $\T$. In particular, $\T_+$ is quasi-convex.
\end{example}

\begin{proposition}
Let $\varphi:(G,\tau)\rightarrow(H,\sigma)$ be a continuous homomorphism and $A\subset H$ quasi-convex; then $\varphi^{-1}(A)\subset G$ is quasi-convex.
\end{proposition}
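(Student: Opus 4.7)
The plan is to verify the quasi-convexity of $\varphi^{-1}(A)$ directly from the definition, by pulling back the separating character supplied by the quasi-convexity of $A$ in $H$.

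First I would fix an arbitrary $x \in G \setminus \varphi^{-1}(A)$; the goal is to produce $\chi \in G^\wedge$ with $\chi(\varphi^{-1}(A)) \subset \T_+$ but $\chi(x) \notin \T_+$. The key observation is that $x \notin \varphi^{-1}(A)$ means exactly that $\varphi(x) \notin A$, so $\varphi(x) \in H \setminus A$. Since $A$ is quasi-convex in $H$, there exists $\psi \in H^\wedge$ such that $\psi(A) \subset \T_+$ and $\psi(\varphi(x)) \notin \T_+$.

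Next I would define $\chi := \psi \circ \varphi \colon G \to \T$. This is a continuous group homomorphism (composition of the continuous homomorphism $\varphi$ and the continuous character $\psi$), so $\chi \in G^\wedge$. To finish, I would check the two required conditions: for every $y \in \varphi^{-1}(A)$ we have $\varphi(y) \in A$, hence $\chi(y) = \psi(\varphi(y)) \in \psi(A) \subset \T_+$, which gives $\chi(\varphi^{-1}(A)) \subset \T_+$; and by construction $\chi(x) = \psi(\varphi(x)) \notin \T_+$. This shows $\varphi^{-1}(A)$ is quasi-convex.

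There is no real obstacle here: the argument is purely a functoriality statement, reflecting the fact that polars and inverse polars behave naturally under continuous homomorphisms. The only thing to be careful about is not to confuse $\varphi^{-1}(A) \subset G$ (inverse image as sets) with any dual map construction; once one notes that $x \notin \varphi^{-1}(A) \iff \varphi(x) \notin A$, the proof is a one-line composition.
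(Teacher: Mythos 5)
Your proof is correct and follows exactly the same route as the paper's: pull back the separating character via $\psi\circ\varphi$, note continuity of the composition, and verify the two defining conditions. Nothing to add.
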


\begin{proof}
\noindent Let $x\notin\varphi^{-1}(A)$, then $\varphi(x)\notin A$.

\noindent Since $A$ is quasi-convex, there exists $\chi\in H^\wedge$ such that $\chi\varphi(x)\notin\T_+$ and $\chi(A)\subset\T_+$. Consider $\Psi=\chi\circ\varphi:G\rightarrow\T$.

\noindent $\Psi(\varphi^{-1}(A))=\chi\circ\varphi\circ\varphi^{-1}(A)\subset\chi(A)\subset\T_+$.

\noindent $\Psi(x)=\chi(\varphi(x))\notin\T_+$.

\end{proof}

\begin{proposition}
Let $(A_i)_{i\in I}\subset G$ be a family of quasi-convex sets. Then $\cap_{i\in I}A_i$ is quasi-convex.
\end{proposition}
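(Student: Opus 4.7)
The proof is immediate once one unpacks the definition, so the plan is essentially a one-line observation together with a careful bookkeeping of the separating character.

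Let $x \in G \setminus \bigcap_{i \in I} A_i$. The key step is to notice that this is equivalent to saying there exists some index $i_0 \in I$ with $x \notin A_{i_0}$. From here I would invoke the quasi-convexity hypothesis on that single set $A_{i_0}$: it yields a continuous character $\chi \in G^\wedge$ such that $\chi(A_{i_0}) \subset \T_+$ while $\chi(x) \notin \T_+$.

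The only remaining thing to check is that this same $\chi$ separates $x$ from the intersection itself, not merely from $A_{i_0}$. This is automatic from monotonicity of the image: since $\bigcap_{i\in I} A_i \subseteq A_{i_0}$, one has $\chi\bigl(\bigcap_{i\in I} A_i\bigr) \subseteq \chi(A_{i_0}) \subseteq \T_+$, and $\chi(x) \notin \T_+$ is exactly what was produced. Thus $\chi$ witnesses the quasi-convexity of $\bigcap_{i\in I} A_i$ at the point $x$.

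Since $x$ was arbitrary in $G \setminus \bigcap_{i\in I} A_i$, the intersection is quasi-convex. There is no real obstacle here; the statement is genuinely a formal consequence of the definition. The only thing worth emphasizing in the write-up is that one does not need to combine characters coming from different $A_i$'s (which would be problematic, since $G^\wedge$ does not carry a convex structure one could exploit): a single index where $x$ fails to lie in $A_{i_0}$ is enough, and the containment $\bigcap_i A_i \subseteq A_{i_0}$ transports the polar condition from $A_{i_0}$ down to the intersection for free.
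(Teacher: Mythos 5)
Your proof is correct and follows exactly the same argument as the paper: pick an index $i_0$ with $x\notin A_{i_0}$, apply quasi-convexity of $A_{i_0}$ to obtain a separating character, and use the inclusion $\bigcap_{i\in I}A_i\subseteq A_{i_0}$ to see that the same character keeps the whole intersection inside $\T_+$. Nothing is missing.
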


\begin{proof}
\noindent $\displaystyle{x\notin\bigcap_{i\in I}A_i\Rightarrow}$ $\exists i_0$ such that $x\notin A_{i_{0}}$.

\noindent Since $A_{i_{0}}$ is quasi-convex, there exists $\varphi\in G^\wedge$ such that $\varphi(x)\notin\T_+$ and $\varphi(A_{i_{0}})\subset\T_+$.

\noindent Since $\displaystyle{\bigcap_{i\in I}A_i\subset A_{i_{0}}}$, $\displaystyle{\varphi(\bigcap_{i\in I}A_i)\subset\varphi(A_{i_{0}})\subset\T_+}$.

\end{proof}

\begin{proposition}
$S\subset G^\wedge$. Then $\displaystyle{S^\triangleleft:=\bigcap_{\chi\in S}\chi^{-1}(\T_+)}$ is quasi-convex.
\end{proposition}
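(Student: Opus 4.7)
The plan is to give the most direct argument, which unwinds the definition of $S^\triangleleft$ and exhibits a separating character from $S$ itself. Concretely, I would fix $x\in G\setminus S^\triangleleft$ and, by the very definition of the inverse polar, pick some $\chi_0\in S$ with $\chi_0(x)\notin \T_+$. Since $\chi_0\in S\subset G^\wedge$, this $\chi_0$ is already an element of the dual group of $G$, so it is an admissible candidate for the quasi-convexity condition. Moreover, for every $y\in S^\triangleleft$ we have $\chi_0(y)\in \T_+$ by the definition of $S^\triangleleft$, so $\chi_0(S^\triangleleft)\subset \T_+$. The two properties required in the definition of quasi-convexity are thus witnessed by the single character $\chi_0$.

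There is really no obstacle here: once the inverse polar is unfolded as $S^\triangleleft=\{x\in G:\chi(x)\in\T_+\ \forall\chi\in S\}$, the separating character is handed to us by the failure of $x$ to lie in $S^\triangleleft$. The only point to be a little careful about is to note that we use $S\subset G^\wedge$ (not just $S\subset\Hom(G,\T)$), so that $\chi_0$ is continuous and hence a legitimate character for the quasi-convexity test.

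Alternatively, and perhaps more in the spirit of the preceding propositions, I could package the result as a two-line consequence of what has just been proved: the example established that $\T_+\subset\T$ is quasi-convex; the proposition on preimages then yields that $\chi^{-1}(\T_+)$ is quasi-convex in $G$ for each $\chi\in S$ (each such $\chi$ being continuous); and the proposition on intersections finishes the job, since $S^\triangleleft=\bigcap_{\chi\in S}\chi^{-1}(\T_+)$. I would probably record this second proof as well, since it highlights the internal consistency of the framework being developed, but I would lead with the one-line direct argument.
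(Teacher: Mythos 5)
Your proposal is correct, and in fact it contains two valid proofs. The one you lead with --- pulling the separating character $\chi_0$ directly out of $S$ from the failure of $x$ to lie in $S^{\triangleleft}=\{x\in G:\chi(x)\in\T_+\ \forall\chi\in S\}$, and observing that $\chi_0(S^{\triangleleft})\subset\T_+$ by definition --- is a clean, self-contained argument that does not rely on any of the preceding propositions; your remark that one needs $S\subset G^\wedge$ (continuity of $\chi_0$) is exactly the right point to flag. The paper instead takes precisely your second, ``packaged'' route: $\T_+$ is quasi-convex (from the example), hence $\chi^{-1}(\T_+)$ is quasi-convex for each continuous $\chi$ (the proposition on preimages), hence the intersection $\bigcap_{\chi\in S}\chi^{-1}(\T_+)$ is quasi-convex (the proposition on intersections). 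The direct argument buys independence from the earlier machinery and makes the witness explicit; the paper's version buys brevity and showcases how the previously established closure properties combine. Either is acceptable here, and since you record both, nothing is missing.
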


\begin{proof}
$\T_+$ is quasi-convex $\Rightarrow \chi^{-1}(\T_+)$ is quasi-convex $\displaystyle{\Rightarrow \bigcap_{\chi\in S}\chi^{-1}(\T_+)}$ is quasi-convex.
\end{proof}

\section{Embedding of a  locally quasi-convex group into a product of metrizable locally quasi-convex groups}
\noindent Our next aim is to embed a Hausdorff locally quasi-convex topological group into a product of groups with "good" properties.

\noindent A deeper result was obtained by Lydia Aussenhofer in \cite{tesislydia}, but on the other hand it is much more difficult to prove because it also deals with dually closed images. We shall omit the result that the resulting group are complete, getting a weaker result.

\noindent Furthermore, this proof tries to be self-contained using only results proved in this section 

\begin{remark}
In this section we shall consider $(G,\tau)$ a topological group as usual and  $U$ to be a quasi-convex neighbourhood at $0$ for $\tau$. We must note that some of the results of this section are also true for a generic subset. 
\end{remark}

\begin{deff}
Let $X\subset G$. Define $\left(\frac{1}{n}\right)X=
\{x\in G|\ x,2x,\dots,nx\in X\}$ and $X_\infty=\displaystyle{\bigcap_{n\geq 1}\left(\frac{1}{n}\right)X}$. In particular, $\displaystyle\left(\frac{1}{n}\right)\T_+=\left[-\frac{1}{4n},\frac{1}{4n}\right]+\Z=\T_n$ and $\T_\infty=\{0\}+\Z$.
\end{deff}

\begin{lemma}\label{caracterizacion1/nU}
Let $(G,\tau)$ be a topological group and $U\subset G$ be a quasi-convex neighbourhood, then $\left(\frac{1}{n}\right)U=\displaystyle\bigcap_{\chi\in U\tr}\chi^{-1}(\T_n)$
\end{lemma}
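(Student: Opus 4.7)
The plan is to prove the two inclusions separately, leaning on the characterization of a quasi-convex set $U$ as its own bipolar: by property (iii) of quasi-convex sets, $U = U^{\triangleright\triangleleft} = \bigcap_{\chi \in U^\triangleright} \chi^{-1}(\T_+)$. The technical engine of the proof is the following elementary fact about $\T = \R/\Z$: for $t \in \T$, $kt \in \T_+$ for all $k \in \{1,\dots,n\}$ if and only if $t \in \T_n$. I would establish this first as a small lemma, working with the representative $s \in [-\tfrac12,\tfrac12)$ of $t$: the case $k=1$ forces $|s|\le \tfrac14$; then, assuming for contradiction that $|s|>\tfrac{1}{4n}$, one chooses the least $k_0$ with $k_0|s|>\tfrac14$. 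Since $n|s|>\tfrac14$, we have $k_0\le n$; and by minimality, $k_0|s|\le \tfrac14+|s|<\tfrac12$, so the representative of $k_0 s$ lies strictly outside $[-\tfrac14,\tfrac14]$, contradicting $k_0 t\in\T_+$.

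For the inclusion $\left(\tfrac{1}{n}\right)U \subseteq \bigcap_{\chi \in U^\triangleright} \chi^{-1}(\T_n)$, I would take $x\in G$ with $x,2x,\dots,nx\in U$ and any $\chi\in U^\triangleright$. Then $\chi(kx)=k\chi(x)\in\T_+$ for $k=1,\dots,n$, and the lemma above applied to $t=\chi(x)$ yields $\chi(x)\in\T_n$.

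For the reverse inclusion, I would take $x\in G$ with $\chi(x)\in\T_n$ for every $\chi\in U^\triangleright$, and show that $kx\in U$ for each $k=1,\dots,n$. For any such $\chi$ and $k$, the representative of $\chi(x)$ can be taken in $[-\tfrac{1}{4n},\tfrac{1}{4n}]$, and multiplying by $k\le n$ gives a number in $[-\tfrac14,\tfrac14]$; hence $\chi(kx)=k\chi(x)\in\T_+$. Since this holds for every $\chi\in U^\triangleright$, the bipolar identity $U=\bigcap_{\chi\in U^\triangleright}\chi^{-1}(\T_+)$ gives $kx\in U$, so $x\in\left(\tfrac{1}{n}\right)U$.

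The only real obstacle is the auxiliary $\T$-lemma, and there the subtlety is simply to handle the ambiguity of representatives in $\R/\Z$ carefully (guaranteeing that $k_0 s$ has not "wrapped around" before exiting $\T_+$). Everything else is bookkeeping with polars, and quasi-convexity of $U$ is invoked only through the bipolar identity, not through $U$ being a neighbourhood.
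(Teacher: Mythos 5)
Your proposal is correct and follows essentially the same route as the paper: the paper's proof is a single chain of equivalences whose two nontrivial links are precisely your two ingredients, namely the bipolar identity $U=U^{\triangleright\triangleleft}$ (to translate $kx\in U$ into $\chi(kx)\in\T_+$ for all $\chi\in U^{\triangleright}$) and the arithmetic fact that $kt\in\T_+$ for $1\le k\le n$ iff $t\in\T_n$, which the paper leaves implicit in its definition $\left(\frac{1}{n}\right)\T_+=\T_n$. Your explicit proof of that $\T$-fact (including the care taken that $k_0s$ does not wrap around) is a welcome elaboration of a step the paper takes for granted, but it is not a different argument.
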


\begin{proof}
$x\in\left(\frac{1}{n}\right)U  \Longleftrightarrow  kx\in U,  1\leq k\leq n  \Longleftrightarrow  \forall \chi \in U\tr ,  1\leq k\leq n, \chi(kx) \in\T_+  \Longleftrightarrow  \forall \chi\in U\tr , 1\leq k \leq n, k\chi(x)\in\T_+  \Longleftrightarrow  \forall \chi\in U\tr , \chi(x)\in\T_n  \Longleftrightarrow x\in\displaystyle\bigcap_{\chi\in U\tr}\chi^{-1}(\T_n)$

\end{proof}

\begin{lemma}
Let $(G,\tau)$ be a topological group, and $U\subset G$ a quasi-convex neighbourhood. Then $\left(\left(\frac{1}{n}\right)U\right)_{n\geq 1}$ is a neighbourhood basis at $0$ for a group topology.
\end{lemma}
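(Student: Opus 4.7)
The plan is to invoke Lemma \ref{ctg} applied to the family $\mathcal{V} = \{\left(\frac{1}{n}\right)U : n \in \N\}$ and check its four prerequisites. The essential ingredient will be the characterisation $\left(\frac{1}{n}\right)U = \bigcap_{\chi \in U\tr}\chi^{-1}(\T_n)$ from Lemma \ref{caracterizacion1/nU}, together with the standard properties of quasi-convex sets (symmetry and containment of $0$) already listed at the start of Section 2.4.

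Axioms (i) (containment of $0$) and (ii) (symmetry) come essentially for free: since $U$ is quasi-convex, $0 \in U$, so $k\cdot 0 = 0 \in U$ for every $k$ and hence $0 \in \left(\frac{1}{n}\right)U$; and if $x, 2x, \dots, nx \in U$ then $-x, -2x, \dots, -nx$ lie in $-U = U$, so $\left(\frac{1}{n}\right)U$ is symmetric and one may take $V = \left(\frac{1}{n}\right)U$ itself in axiom (ii). Axiom (iv) is also immediate: given $n_1, n_2$, set $m = \max(n_1,n_2)$; then $kx \in U$ for $1\leq k\leq m$ certainly implies $kx \in U$ for $1\leq k\leq n_i$, so $\left(\frac{1}{m}\right)U \subseteq \left(\frac{1}{n_1}\right)U \cap \left(\frac{1}{n_2}\right)U$.

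The only step requiring a genuine argument is axiom (iii), for which I propose to show that $\left(\frac{1}{2n}\right)U + \left(\frac{1}{2n}\right)U \subseteq \left(\frac{1}{n}\right)U$. By Lemma \ref{caracterizacion1/nU}, this reduces to checking that $\chi(x+y) \in \T_n$ for every $\chi \in U\tr$, whenever $x, y \in \left(\frac{1}{2n}\right)U$. For such $\chi$ we have $\chi(x), \chi(y) \in \T_{2n} = [-\tfrac{1}{8n},\tfrac{1}{8n}]+\Z$; choosing representatives in $[-\tfrac{1}{8n},\tfrac{1}{8n}]$, their sum lies in $[-\tfrac{1}{4n}, \tfrac{1}{4n}]$, so $\chi(x+y)=\chi(x)+\chi(y)\in \T_n$, as required.

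The main (mild) obstacle is precisely axiom (iii): one cannot conclude $k(x+y) \in U$ directly from $kx, ky \in U$, because $U$ is not a subgroup. It is here that quasi-convexity is indispensable, for Lemma \ref{caracterizacion1/nU} is exactly what allows one to translate membership in $\left(\frac{1}{k}\right)U$ into the pointwise condition $\chi(\cdot) \in \T_k$ over all $\chi \in U\tr$, where the elementary additive estimate in $\T$ carried out above can be applied. Once the four axioms of Lemma \ref{ctg} are verified, that lemma delivers the unique group topology on $G$ having $\{\left(\frac{1}{n}\right)U : n \in \N\}$ as a neighbourhood basis at $0$.
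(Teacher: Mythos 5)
Your proposal is correct and follows essentially the same route as the paper: both verify the four axioms of Lemma \ref{ctg}, dispatching (i), (ii), (iv) directly and handling (iii) by showing $\left(\frac{1}{2n}\right)U+\left(\frac{1}{2n}\right)U\subseteq\left(\frac{1}{n}\right)U$ via the characterization of Lemma \ref{caracterizacion1/nU} and the inclusion $\T_{2n}+\T_{2n}\subseteq\T_n$. Your write-up is in fact somewhat more explicit than the paper's (which leaves (i), (ii), (iv) as ``trivial''), but there is no mathematical difference.
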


\begin{proof}
\noindent We check conditions set in \ref{ctg} . Conditions (i) and (iv) are trivial from the choice we have done. (ii) is obtained as a direct consequence of the fact that $U_n$ is symmetric. In order to prove (iii), we will see that $\left(\frac{1}{2n}\right)U+\left(\frac{1}{2n}\right)U\subset \left(\frac{1}{n}\right)U$. Let $x\in\displaystyle\left(\frac{1}{2n}\right)U$ for all $\chi\in U\tr$. By the previous characterization, we have $\chi(x)\in\T_{2n},\ \forall \chi\in U\tr$. Let $x,y\in\left(\frac{1}{2n}\right)U$, therefore $\chi(x+y)=\chi(x)+\chi(y)\in\T_{2n}+\T_{2n}=\T_n,\ \forall\chi\in U\tr$. Equivalently, $x+y\in\left(\frac{1}{n}\right)U$.
\end{proof}

\noindent Then, we have that $\{(\frac{1}{n})U\mid n\in\N\}$ is a neighbourhood basis at $e$ for a group topology which will be denoted by  $\mathfrak{T}_U$.

\begin{lemma}
The set $U_\infty$ is a subgroup of $G$ for any $U$
\end{lemma}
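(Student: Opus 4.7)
The plan is to reduce $U_\infty$ to an intersection of kernels of characters, which is manifestly a subgroup. The key input is the previous Lemma \ref{caracterizacion1/nU}, which characterizes $(1/n)U$ in terms of the polar $U^\triangleright$.

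First I would apply that lemma to write
\[
U_\infty \;=\; \bigcap_{n\geq 1}\Bigl(\tfrac{1}{n}\Bigr)U \;=\; \bigcap_{n\geq 1}\bigcap_{\chi\in U^\triangleright}\chi^{-1}(\T_n).
\]
Since the two intersections are taken over independent index sets, they may be interchanged, and since $\chi^{-1}$ commutes with intersections I can pull it outside:
\[
U_\infty \;=\; \bigcap_{\chi\in U^\triangleright}\chi^{-1}\!\Bigl(\bigcap_{n\geq 1}\T_n\Bigr) \;=\; \bigcap_{\chi\in U^\triangleright}\chi^{-1}(\T_\infty).
\]
By the Remark that introduced the notation, $\T_\infty=\{0\}+\Z$, i.e.\ the neutral element of $\T$. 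Therefore $\chi^{-1}(\T_\infty)=\ker\chi$ for every $\chi\in U^\triangleright$, and
\[
U_\infty \;=\; \bigcap_{\chi\in U^\triangleright}\ker\chi.
\]

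Each $\ker\chi$ is a subgroup of $G$ because $\chi$ is a group homomorphism, and an arbitrary intersection of subgroups is a subgroup; hence $U_\infty$ is a subgroup of $G$. There is no real obstacle in this argument — the only subtle point is that Lemma \ref{caracterizacion1/nU} genuinely requires $U$ to be quasi-convex (so that $U=U^{\triangleright\triangleleft}$), which is why the hypothesis on $U$ is needed even though the statement of the present lemma does not mention quasi-convexity explicitly; this fact should be made explicit in the proof.
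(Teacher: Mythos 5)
Your proof is correct and is essentially identical to the paper's own argument: both rewrite $U_\infty$ via Lemma \ref{caracterizacion1/nU} as $\bigcap_{\chi\in U^{\triangleright}}\chi^{-1}\bigl(\bigcap_n \T_n\bigr)=\bigcap_{\chi\in U^{\triangleright}}\ker(\chi)$, an intersection of kernels and hence a subgroup. Your added remark that the quasi-convexity of $U$ (a standing assumption in this section) is what makes Lemma \ref{caracterizacion1/nU} applicable is a worthwhile clarification, but the route is the same.
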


\begin{proof}
$U_\infty =\displaystyle\bigcap_n \left(\frac{1}{n}\right)U=\displaystyle\bigcap_n \displaystyle
\bigcap_{\chi\in U\tr} \chi^{-1}(\T_n)= \displaystyle\bigcap_{\chi\in U\tr} \chi^{-1}(\cap_n \T_n)=
\displaystyle\bigcap_{\chi\in U\tr} \chi^{-1}(\{0+\Z\})= \displaystyle\bigcap_{\chi \in U\tr}\ker(\chi)< G$.

\end{proof}

\begin{lemma}
Let $(G,\tau)$ be a topological group. Then $\overline{\{0\}}^{\tau}=\displaystyle\bigcap_{V\in\mathcal{U}_\tau(0)}V$.
\end{lemma}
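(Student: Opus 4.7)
The plan is to prove the two inclusions separately, using only the fact that in a topological group the translation maps $t_a(x) = x+a$ and the inversion $x \mapsto -x$ are homeomorphisms, so that the family $\mathcal{U}_\tau(0)$ is stable under $V \mapsto -V$ and the neighborhood filter at $x$ coincides with $x + \mathcal{U}_\tau(0)$.

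First, I would unpack $\overline{\{0\}}^\tau$ via the standard characterization of closure: $x \in \overline{\{0\}}$ iff every neighborhood of $x$ contains $0$. Since every neighborhood of $x$ is of the form $x + V$ for some $V \in \mathcal{U}_\tau(0)$, this condition becomes: $0 \in x + V$ for every $V \in \mathcal{U}_\tau(0)$, equivalently $-x \in V$ for every $V \in \mathcal{U}_\tau(0)$. Hence
\[
x \in \overline{\{0\}}^\tau \iff -x \in \bigcap_{V \in \mathcal{U}_\tau(0)} V.
\]

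Next, I would observe that the intersection $I := \bigcap_{V \in \mathcal{U}_\tau(0)} V$ is symmetric. This is because $V \mapsto -V$ is a bijection of $\mathcal{U}_\tau(0)$ onto itself (inversion is a homeomorphism), so
\[
-I = -\bigcap_{V \in \mathcal{U}_\tau(0)} V = \bigcap_{V \in \mathcal{U}_\tau(0)} (-V) = \bigcap_{W \in \mathcal{U}_\tau(0)} W = I.
\]
Therefore $-x \in I$ iff $x \in I$, and combining with the previous equivalence yields $x \in \overline{\{0\}}^\tau \iff x \in I$, which is exactly the claimed equality.

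There is essentially no hard step here; the only thing one must not gloss over is that the neighborhood filter at an arbitrary point is the translate of the filter at $0$ (which uses continuity of translations, already recorded at the beginning of the chapter as a consequence of proposition 1 in \cite{morris}), and that the intersection $I$ is symmetric. Both are immediate from the topological group axioms, so the proof reduces to two lines of set-theoretic manipulation once these facts are invoked.
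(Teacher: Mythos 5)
Your proof is correct and is essentially the same argument as the paper's: both reduce membership in $\overline{\{0\}}$ to the condition ``$-x\in V$ for all $V\in\mathcal{U}_\tau(0)$'' via translation of the neighbourhood filter, and both then kill the minus sign by a symmetry argument. The only cosmetic difference is that the paper works with complements and restricts to symmetric basic neighbourhoods $W=xV$, whereas you argue directly and observe that the full intersection $\bigcap_{V\in\mathcal{U}_\tau(0)}V$ is itself symmetric; the two devices are interchangeable.
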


\begin{proof}
$G\setminus\overline{\{0\}}=\{x\in G|\ \exists W\in \mathcal{U}(x):\ 0\notin W\}$.
We can take $W=xV$ where $V\in \mathcal{U}(0)$ is symmetric. Now,
$0\notin xV \Longleftrightarrow x^{-1}\notin V \Longleftrightarrow x\notin V$.
Then, $\{x\in G|\ \exists W\in \mathcal{U}(x),\ 0\notin W\}=\{x|\  \exists V\in \mathcal{U}(0), \ x\notin V\}= G\setminus(\displaystyle\bigcap_{V\in\mathcal{U}_\tau(0)}V)$.

\end{proof}

\begin{lemma}
Let $(G,\tau)$ be a topological group. Then $\displaystyle{\overline{\{0\}}^{\mathfrak{T}_U}}=U_\infty$
\end{lemma}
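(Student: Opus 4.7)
The plan is to obtain the identity by combining the previous lemma (which expresses the closure of $\{0\}$ as the intersection of all $0$-neighbourhoods) with the fact that $\{(\tfrac{1}{n})U : n \in \N\}$ is a neighbourhood basis at $0$ for $\mathfrak{T}_U$, and finally with the definition $U_\infty = \bigcap_{n\geq 1} (\tfrac{1}{n})U$.

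First, I would apply the immediately preceding lemma to the topology $\mathfrak{T}_U$, obtaining
$$\overline{\{0\}}^{\mathfrak{T}_U} = \bigcap_{V\in\mathcal{U}_{\mathfrak{T}_U}(0)} V.$$
Next, I would replace the intersection over the full neighbourhood filter by the intersection over the basis. This is the only step that requires any argument: on the one hand, every basis element is itself a neighbourhood, so $\bigcap_{V\in\mathcal{U}_{\mathfrak{T}_U}(0)} V \subseteq \bigcap_{n\geq 1}(\tfrac{1}{n})U$; on the other hand, every $V\in\mathcal{U}_{\mathfrak{T}_U}(0)$ contains some $(\tfrac{1}{n})U$, hence $\bigcap_{n\geq 1}(\tfrac{1}{n})U \subseteq V$ for every such $V$, and therefore $\bigcap_{n\geq 1}(\tfrac{1}{n})U \subseteq \bigcap_{V\in\mathcal{U}_{\mathfrak{T}_U}(0)} V$.

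Finally, the right-hand side $\bigcap_{n\geq 1}(\tfrac{1}{n})U$ is by definition $U_\infty$, which finishes the proof.

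There is no real obstacle: the content is wholly in the earlier lemma, and the only thing to be careful about is the standard (but worth spelling out) reduction of the intersection of all $0$-neighbourhoods to an intersection over a basis. No use is made here of the characterisation $U_\infty = \bigcap_{\chi\in U\tr}\ker(\chi)$ or of quasi-convexity of $U$ beyond what is needed to guarantee that $\mathfrak{T}_U$ is a group topology in the first place.
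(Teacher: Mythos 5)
Your proof is correct and follows exactly the route the paper takes: apply the preceding lemma to $\mathfrak{T}_U$, reduce the intersection over all $0$-neighbourhoods to the intersection over the basis $\{(\tfrac{1}{n})U\}$, and invoke the definition of $U_\infty$. The only difference is that you spell out the basis-reduction step, which the paper dispatches in one sentence.
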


\begin{proof}
\noindent $\displaystyle{\overline{\{0\}}^{\mathfrak{T}_U}=
\displaystyle\bigcap_{n} \left(\left(\frac{1}{n}\right)U\right)=U_\infty}$. It is enough to pick a neighbourhood basis, because any other neighbourhood contains a basic one.
\end{proof}

\begin{corollary}
Given a topological group $(G,\tau)$, $U_\infty$ is a closed subgroup of $G$.
\end{corollary}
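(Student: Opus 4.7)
The plan is to exploit the description $U_\infty=\bigcap_{\chi\in U\tr}\ker(\chi)$ obtained in the earlier lemma, which reduces the statement to two routine observations.

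First, that same lemma already establishes that $U_\infty$ is a subgroup of $G$, so only closedness in $\tau$ remains. Each character $\chi\in U\tr\subset G^\wedge$ is by definition a continuous homomorphism $\chi:G\to\T$. Since $\T$ is Hausdorff, $\{0+\Z\}$ is closed in $\T$, and hence $\ker(\chi)=\chi^{-1}(\{0+\Z\})$ is closed in $(G,\tau)$. Arbitrary intersections of closed sets are closed, so $U_\infty$ is $\tau$-closed.

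An alternative route uses the identity $U_\infty=\overline{\{0\}}^{\mathfrak{T}_U}$ proved immediately before. One first checks that $\mathfrak{T}_U$ is coarser than $\tau$: each basic $\mathfrak{T}_U$-neighbourhood $\left(\frac{1}{n}\right)U$ can be rewritten as $\bigcap_{k=1}^{n}m_k^{-1}(U)$, where $m_k:G\to G$ denotes multiplication by $k$; being a finite intersection of $\tau$-neighbourhoods of $0$ (each $m_k$ is $\tau$-continuous and fixes $0$), it is itself a $\tau$-neighbourhood of $0$. Hence $\mathfrak{T}_U\subseteq\tau$, so every $\mathfrak{T}_U$-closed set is $\tau$-closed, and in particular $U_\infty=\overline{\{0\}}^{\mathfrak{T}_U}$ is $\tau$-closed.

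I do not anticipate any genuine obstacle: both approaches are one-line consequences of the lemmas proved just above, and the first is arguably cleaner since it avoids passing through the auxiliary topology $\mathfrak{T}_U$ altogether.
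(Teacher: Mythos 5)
Your proof is correct. The paper actually leaves this corollary without an explicit proof: it is meant to follow immediately from the two lemmas just before it, namely $U_\infty=\displaystyle\bigcap_{\chi\in U\tr}\ker(\chi)$ (the subgroup property) and $\overline{\{0\}}^{\mathfrak{T}_U}=U_\infty$ together with the observation, made a few lines later, that $\id:(G,\tau)\rightarrow(G,\mathfrak{T}_U)$ is continuous, i.e.\ $\mathfrak{T}_U\subseteq\tau$ --- which is exactly your second route. Your first route is a genuinely different and arguably cleaner derivation of closedness: since every $\chi\in U\tr$ lies in $G^\wedge=\CHom(G,\T)$ and is therefore $\tau$-continuous, each $\ker(\chi)=\chi^{-1}(\{0+\Z\})$ is $\tau$-closed, and an arbitrary intersection of closed sets is closed. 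This avoids the auxiliary topology $\mathfrak{T}_U$ entirely and makes it transparent that the closedness is with respect to the original topology $\tau$ (a point the paper's placement leaves slightly implicit, since the closure lemma a priori only gives $\mathfrak{T}_U$-closedness). Both arguments are complete; there is no gap in either.
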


\noindent By the characterization of $\left(\frac{1}{n}\right)U$ of \ref{caracterizacion1/nU}, we can define $\mathfrak{T}_U$ in another way. We choose as neighbourhoods: $V_{U\tr ,n}=\{x\in G\mid\forall\chi\in U\tr, \chi(x)\in\T_n\}=\displaystyle\bigcap_{\chi\in U\tr}\chi^{-1}(\T_n)=\left(\frac{1}{n}\right)U$.

\noindent Then, we have two possibilities to reach the same topology. Given $U$ a quasi-convex neighbourhood of $0$, we can take either the topology generated by $\left(\frac{1}{n}\right)U$, or the $U\tr$-topology (or topology of uniform convergence on $U\tr$).

\noindent We now consider the mapping

\centerline {$\id:(G,\tau)\rightarrow (G,\mathfrak{T}_U)$}

\noindent Since $\{(1/n)U\mid n\in\N\}$ is a neighbourhood basis at $0$
in $(G, \mathfrak{T}_U)$ and $(1/n)U$ is a neighbourhood of $0$ in $\tau$, the mapping is continuous.

\noindent We can now define

\centerline {$\varphi_U:(G,\mathfrak{T}_U)\rightarrow (G/U_\infty, \mathfrak{T}_U/U_\infty) $}

\noindent as the canonical projection. Since $U_\infty <G$, the quotient is well-defined. Considering now the composition of both mappings, we get:

\centerline {$\varphi_U\circ \ \id:(G,\tau) \rightarrow (G/U_\infty, \mathfrak{T}_U/U_\infty) $}

\noindent We define $(H_U,\sigma_U):=(G/U_\infty, \mathfrak{T}_U/U_\infty) $.

\noindent We consider $\chi\in U\tr$, we know $\chi(U_\infty)\subset\chi(U)\subset\T_+$.
Since $\chi(U_\infty)$ is a subgroup which is contained in $\T_+$, it must be $\chi(U_\infty)=\{0\}$.
Consequently, $\chi$ factorizes in a unique way described in the diagram:

\centerline{$\begin{matrix}G\overset{\chi}{\longrightarrow }\T\\ \quad\varphi_U\;{\searrow{\;\;}}
\nearrow{\overline{\chi}\quad}\\  G/U_\infty\end{matrix}$}

\noindent Since $\chi\left(\left(\frac{1}{n}\right)U+U_\infty\right)= \chi\left(\left(\frac{1}{n}\right)U\right)\subset\T_n$ holds, we get the continuity of $\bar{\chi}$. Furthermore, we can consider $U\tr$ as a subset in $(H_U,\sigma_U)\tr$.

\noindent We choose $V_{U\tr,n}=\{x+U_\infty\mid\forall\chi\in U\tr \ \bar{\chi}(x+U_\infty)=\chi(x)\in\T_n\}=\left(\frac{1}{n}\right)U+U_\infty$ as neighbourhood basis in $\mathfrak{T}_U/U_\infty=\sigma_U$.

\noindent We want that $\sigma_U=\mathfrak{T}_S$, for some adequate $S$. $\varphi_U(U)$ is a neighbourhood at $0$ in $(H,\sigma_U)$ because $U$ is a neighbourhood at $0$ in $(G,\mathfrak{T}_U)$ and the quotient mapping is always open.

\noindent The proof of the main result of this section is supported by the following two lemmas:

\begin{lemma}\label{lema1}
$U+\ker(\varphi_{U})=U$
\end{lemma}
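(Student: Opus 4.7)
The plan is to prove the two inclusions separately, with the nontrivial one exploiting that $U$ is quasi-convex and that $\ker(\varphi_U)$ is precisely the intersection of the kernels of the characters in $U^{\triangleright}$.

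First I would unwind definitions. Since $\varphi_U$ is the canonical projection onto $G/U_\infty$, we have $\ker(\varphi_U)=U_\infty$, so the claim reduces to the equality $U+U_\infty=U$. The inclusion $U\subseteq U+U_\infty$ is immediate because $0\in U_\infty$ (note that $U_\infty$ is a subgroup by the lemma already proved).

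Next I would handle the nontrivial inclusion $U+U_\infty\subseteq U$. The key is to re-express both sides through characters. Since $U$ is quasi-convex, Property (iii) of quasi-convex sets gives
\[
U \;=\; U^{\triangleright\triangleleft} \;=\; \bigcap_{\chi\in U^{\triangleright}}\chi^{-1}(\T_+).
\]
On the other hand, the earlier computation of $U_\infty$ yields
\[
U_\infty \;=\; \bigcap_{\chi\in U^{\triangleright}}\ker(\chi).
\]
Now pick any $u\in U$ and any $w\in U_\infty$, and fix $\chi\in U^{\triangleright}$. Then $\chi(u)\in\T_+$ and $\chi(w)=0$, hence $\chi(u+w)=\chi(u)\in\T_+$. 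As this holds for every $\chi\in U^{\triangleright}$, the above description of $U$ gives $u+w\in U$, which completes the inclusion.

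I do not anticipate a genuine obstacle; the content is really just the observation that adding an element from the common kernel of all characters in $U^{\triangleright}$ does not move a point out of the set cut out by those same characters. The only thing to be careful about is to cite the correct earlier results — specifically, the quasi-convexity identity $U=U^{\triangleright\triangleleft}$ and the description $U_\infty=\bigcap_{\chi\in U^{\triangleright}}\ker(\chi)$ — and to remember that $\ker(\varphi_U)$ equals $U_\infty$ by construction of the quotient.
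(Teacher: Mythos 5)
Your proof is correct and follows essentially the same route as the paper's: both reduce to showing $U+U_\infty\subseteq U$ by evaluating any $\chi\in U^{\triangleright}$ at $u+w$, using $\chi(w)=0$ for $w\in U_\infty$ and the quasi-convexity identity $U=U^{\triangleright\triangleleft}$. You merely make explicit the two ingredients ($\ker(\varphi_U)=U_\infty$ and $U_\infty=\bigcap_{\chi\in U^{\triangleright}}\ker(\chi)$) that the paper leaves implicit.
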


\begin{proof}
Clearly $U+\ker(\varphi_U)\supset U$. We now prove the converse. Choose $x\in U, y\in U_\infty$; $\chi(x+y)=\chi(x)+\chi(y)=\chi(x)\in \T_+ \ $ for every $\chi \in U\tr$.

\end{proof}

\begin{lemma}\label{lema2}
Let $(G,\tau)$ be a Hausdorff locally quasi-convex topological group and let $U\in\mathcal{U}_\tau (0)$ be a quasi-convex set. Then, there exists $\varphi_U:(G,\tau) \rightarrow (H_U,\sigma_U)$ continuous, $\varphi_U(U)$ is an open neighbourhood in $(H_U,\sigma_U)$; furthermore $\sigma_U=\mathfrak{T}_S$ for $S=U\tr$ and $(H_U,\sigma_U)$ is locally quasi-convex.
\end{lemma}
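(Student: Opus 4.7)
The plan is to assemble the claim from the pieces already set up before the statement, checking continuity, openness of $\varphi_U(U)$, identification of $\sigma_U$ with a topology of the form $\mathfrak{T}_S$, and finally local quasi-convexity, in that order.

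\textbf{Continuity of $\varphi_U$.} I would first argue that $\id\colon (G,\tau)\to (G,\mathfrak{T}_U)$ is continuous: each basic $\mathfrak{T}_U$-neighbourhood $(1/n)U$ contains any symmetric $\tau$-neighbourhood $V$ with $V+\stackrel{(n)}{\cdots}+V\subset U$, which exists because $\tau$ is a group topology and $U\in\mathcal{U}_\tau(0)$. Since the canonical projection $(G,\mathfrak{T}_U)\to (G/U_\infty,\mathfrak{T}_U/U_\infty)$ is continuous by definition of the quotient topology, the composition $\varphi_U$ is continuous into $(H_U,\sigma_U)$.

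\textbf{Openness of $\varphi_U(U)$.} The quotient map is open, so $\varphi_U(U)$ is a $\sigma_U$-neighbourhood of $0$ as soon as $U+U_\infty$ contains a $\mathfrak{T}_U$-open neighbourhood of $0$. By Lemma~\ref{lema1} we have $U+U_\infty=U$, and $U=(1/1)U$ is a basic $\mathfrak{T}_U$-neighbourhood of $0$, hence contains a $\mathfrak{T}_U$-open neighbourhood of $0$ by homogeneity. Thus $\varphi_U(U)\in\mathcal{U}_{\sigma_U}(0)$.

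\textbf{Identification $\sigma_U=\mathfrak{T}_S$ with $S=U\tr$.} Regard each $\chi\in U\tr$ as a character on $H_U$ via the factorisation $\bar\chi$ (which is well-defined and continuous as noted before the statement, since $\chi$ vanishes on $U_\infty$). A basis of zero-neighbourhoods for $\mathfrak{T}_S$ on $H_U$ is $\{V_{S,n}\}$ with $V_{S,n}=\bigcap_{\chi\in U\tr}\bar\chi^{-1}(\T_n)$. On the other hand, a basis of zero-neighbourhoods for $\sigma_U$ is $\{\varphi_U((1/n)U)\}$. Using Lemma~\ref{caracterizacion1/nU} one sees that $(1/n)U+U_\infty=(1/n)U$ (because $U_\infty\subset\ker\chi$ for every $\chi\in U\tr$), so $\varphi_U((1/n)U)$ equals exactly $V_{S,n}$. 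This gives the equality of the two topologies.

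\textbf{Local quasi-convexity.} Each $V_{S,n}=\bigcap_{\chi\in U\tr}\bar\chi^{-1}(\T_n)$ is an intersection of sets of the form $\bar\chi^{-1}(\T_n)$; since $\T_n=(1/n)\T_+$ is quasi-convex in $\T$ (by the characterisation of $(1/n)$ of a quasi-convex neighbourhood, or directly, as noted in the example) and $\bar\chi$ is continuous, the preimages are quasi-convex by the pullback lemma. The intersection of quasi-convex sets is quasi-convex by the corresponding proposition in Section~4. Hence $(H_U,\sigma_U)$ admits a neighbourhood basis at $0$ consisting of quasi-convex sets, which completes the proof. The only step requiring real care is the third one, where one must be scrupulous about viewing $S=U\tr$ simultaneously as a subset of $G^\wedge$ and, via the factorisation $\chi\mapsto\bar\chi$, of $H_U^\wedge$; everything else is bookkeeping built from the lemmas already proved.
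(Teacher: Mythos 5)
Your proof is correct and follows essentially the same route as the paper: the paper's own proof consists of the discussion preceding the lemma (continuity of $\id$ and of the quotient map, openness of $\varphi_U(U)$ via openness of the projection, and the factorisation $\chi\mapsto\bar\chi$) together with the verification that $\varphi_U\left(\left(\frac{1}{n}\right)U\right)=V_{U\tr,n}$. Your final step even makes explicit the point the paper leaves implicit, namely that each $V_{S,n}$ is quasi-convex as an intersection of preimages of $\T_n$ under the continuous characters $\bar\chi$.
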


\begin{proof}
It only remains to see that $(H_U,\sigma_U)$ defined above is locally quasi-convex.

\noindent Let $\varphi_U$ be the canonical projection; we will see that
$\{\varphi_U\\frac{1}{n}U\mid n\in\N\}$ is a neighbourhood basis for the topology we are interested in. For that it is enough to see that: 
$\varphi_U((\frac{1}{n})U) =V_{U\tr,n}$.
$V_{U\tr,n}=\{x+U_\infty \mid \forall \chi\in U\tr\ \overline{\chi}(x+U_\infty)\in\T_n\}=\{x+U_\infty\mid
x\in (\frac{1}{n})U\} =\varphi_U((\frac{1}{n})U)$

\end{proof}

\noindent We can now state and prove the main result:

\begin{theorem}
Let $(G,\tau)$ be a Hausdorff locally quasi-convex group. Then

\centerline{$\Psi:G\rightarrow \displaystyle\prod_{U\in\mathcal{U}(0)\ q.c}(H_U,\sigma_U)$}

\noindent where $x\mapsto(\varphi_U(x))_{U\in \mathcal{U}(0)}$ is a topological embedding. Furthermore $(H_U,\sigma_U)$ are metrizable groups.
 \end{theorem}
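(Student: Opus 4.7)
The plan is to verify, in order, that $\Psi$ is a continuous homomorphism, that it is injective, that it is open onto its image, and finally that each factor $(H_U,\sigma_U)$ is metrizable. For the first item, $\Psi$ is a homomorphism coordinatewise, since each $\varphi_U$ is the composition of the identity $(G,\tau)\to(G,\mathfrak{T}_U)$ with the quotient map onto $G/U_\infty$; continuity of $\Psi$ into the product reduces to continuity of each coordinate $\varphi_U:(G,\tau)\to(H_U,\sigma_U)$, which is already recorded in Lemma \ref{lema2}.

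For injectivity, suppose $\Psi(x)=0$; then for every quasi-convex $U\in\mathcal{U}_\tau(0)$ we have $\varphi_U(x)=0$, i.e.\ $x\in U_\infty\subseteq U$. Since $(G,\tau)$ is locally quasi-convex, such $U$ form a neighborhood basis at $0$, so $x$ lies in $\bigcap_{V\in\mathcal{U}_\tau(0)}V=\overline{\{0\}}^{\tau}$, which equals $\{0\}$ by Hausdorffness.

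The heart of the argument is showing that $\Psi$ is open onto its image. Given any $V\in\mathcal{U}_\tau(0)$, local quasi-convexity lets me assume $V$ is quasi-convex, and the trick is to use a single coordinate of the product: set
\[
W \;=\; \varphi_V(V)\times\prod_{U\neq V} H_U,
\]
which is open in $\prod_U H_U$ because $\varphi_V(V)$ is open in $H_V$ by Lemma \ref{lema2}. The inclusion $\Psi(V)\subseteq W\cap\Psi(G)$ is immediate, and the reverse inclusion is exactly where Lemma \ref{lema1} enters: if $\Psi(x)\in W$, then $\varphi_V(x)\in\varphi_V(V)$, hence $x\in\varphi_V^{-1}(\varphi_V(V))=V+\ker(\varphi_V)=V+V_\infty=V$. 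Thus $\Psi(V)=W\cap\Psi(G)$, and translating by $\Psi(x_0)$ shows that $\Psi$ carries a neighborhood basis at $0$ in $G$ onto a neighborhood basis at $\Psi(0)$ in $\Psi(G)$, which together with continuity and injectivity yields a topological embedding.

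Finally, to get metrizability of each $(H_U,\sigma_U)$, I would exhibit the countable neighborhood basis $\{\varphi_U((1/n)U):n\in\N\}$ at $0$ furnished by Lemma \ref{lema2}; Hausdorffness is automatic because $\sigma_U$ is the quotient of $\mathfrak{T}_U$ by its closure of zero $U_\infty=\overline{\{0\}}^{\mathfrak{T}_U}$, and the Birkhoff--Kakutani theorem then supplies a compatible invariant metric. The main obstacle at first sight is the openness of $\Psi$, but the key simplification is Lemma \ref{lema1}: the identity $V+V_\infty=V$ for quasi-convex $V$ means that $\varphi_V^{-1}(\varphi_V(V))=V$, so a single cylindrical factor of the product already detects $V$ exactly, which is what makes the whole embedding argument go through cleanly.
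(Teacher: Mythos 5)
Your proposal is correct and follows essentially the same route as the paper: continuity coordinatewise via Lemma \ref{lema2}, injectivity from $\bigcap U_\infty\subseteq\overline{\{0\}}^\tau=\{0\}$, openness onto the image via the cylinder $\varphi_V(V)\times\prod_{U\neq V}H_U$ together with the identity $V+\ker(\varphi_V)=V$ of Lemma \ref{lema1}, and metrizability from the countable basis $\{(1/n)U\}$. The only cosmetic difference is that you invoke Birkhoff--Kakutani explicitly where the paper leaves it implicit.
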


\begin{proof}
By the previous constructions $(H_U,\sigma_U)$ is Hausdorff, since it is a quotient by a closed subgroup. It is metrizable, since $\{(\frac{1}{n})U\mid n\in\N\}$ is a countable neighbourhood basis at $e$.

\noindent We will see that $\Psi$ is an embedding:

\noindent First we prove it is one-to-one. $x\in ker(\Psi)$; that is: $\varphi_U(x)=0 \ \forall U \Rightarrow x\in \displaystyle\bigcap_{U}ker(\varphi_U)\subset \displaystyle\bigcap_{U\in \mathcal{U} \ q.c}U=\overline{\{0\}}^\tau$. Since $\tau$ is Hausdorff, this implies that $x=0$. Hence $\Psi$ is one-to-one.

\noindent The continuity of each $\varphi_U$ is determined in \ref{lema2}. Since $\Psi$ is continuous in each factor (of a product), in particular, it is continuous.

\noindent Finally, we must see that $\Psi$ is open respect to its image.
It suffices to prove that for each quasi-convex neighbourhood $U$, $\Psi(U)$ is a neighbourhood in $\Psi(G)$.

\noindent $\Psi(G)\cap (\varphi_U(U) \times \displaystyle\prod_{V\neq U}H_V)=\{\Psi(x)\mid \varphi_U(x)\in \varphi_U(U)\}=\{\Psi(x)\mid x\in U+\ker(\varphi_U)\}$. By \ref{lema1}, the last term equals  $\{\Psi(x)\mid x\in U\}=\Psi(U)$.

\end{proof}

\chapter{Different topologies on the group of the integers}
\section{The $2$-adic topology}

\noindent The most natural non-discrete topologies on $\Z$ are the $p$-adic topologies. We first study the 2-adic topology in $\Z$, which is usually denoted by $\tau_2$.

\noindent The 2-adic topology is defined by the following neighbourhood basis at $0$: $\mathcal{U}=\{2^n\Z\mid n\in\N\}$. We will denote $2^n\Z$ by $U_n$. It is easy to check that is a neighbourhood basis at $0$ for a group topology.

\noindent \textbf{Convergent sequences in $(\Z,\tau_2)$}

\noindent We try to find a characterization of sequences that converge to $0$ in $\tau_2$. Choose $l_j\rightarrow 0$ in $\tau_2$; by definition of convergence, for each neighbourhood $U\in\mathcal{U}(0)$ there exists $N_0\in\N$ such that $l_j\in U$ for all $j\geq N_0$. Now, $l_j\in U_{n_0}$ if $l_j=2^{n_0}k$ where $k\in\Z$. Hence, for each $n\in\N$ there must exist $j_n$ such that for every $j\geq j_n,\ l_j\in 2^n\Z$, or, equivalently $2^n\mid l_j$. Then, we have found that:

\centerline{$l_j\rightarrow$ $0$ in $\tau_2\Longleftrightarrow \forall n\in\N\ \exists j_n\mbox{ such that for all } j\geq j_n,\ 2^n\mid l_j$}

\noindent \textbf{The dual of $(\Z,\tau_2)$}.

\noindent Let $\chi\in (\Z,\tau_2)^\wedge$.Since $\chi$ is continuous, there exists a neighbourhood $U=2^n\Z$ such that $\chi(U)=\chi(2^n\Z)\subset \T_+$. On the other hand $2^n\Z$ is a subgroup of $\Z$, hence, its image by a homomorphism will be again a subgroup in $\T$. The only subgroup contained in $\T_+$ is $\{0\}$, therefore, $\chi(2^n\Z)=\{0+\Z\}$. Let $x+\Z=\chi(1)$. Hence, $\chi(2^n)=2^nx+\Z=0+\Z$.
This implies that $2^nx\in \Z$.

\noindent We remember that $x\in\R$, and $x=\frac{k}{2^n}+\Z\in\T$. This $x$ represents the character $\chi$ we have chosen at the beginnning. Hence:

\centerline{$(\Z,\tau_2)^\wedge \subset\{\frac{k}{2^n}+\Z\mid\  k\in\Z, n\in\Z\}.$}

\noindent  Conversely, $(\Z,\tau_2)^\wedge\supset\{\frac{k}{2^n}+\Z\mid\  k\in\Z, n\in\Z\}$:

\noindent Let $k\in\Z, n\in\N$; we want to prove that $\frac{k}{2^n}+\Z\in(\Z,\tau_2)^\wedge$.

\noindent $\ker(\frac{k}{2^n}+\Z)=\{j\in\Z\mid j\frac{k}{2^n}+\Z=0+\Z\}\supset 2^n\Z$. Since any subgroup containing a neighbourhood is open, in particular, $\ker(\frac{k}{2^n}+\Z)$ is open.

\noindent Furthermore, since $(\frac{k}{2^n}+\Z)^{-1}(\T_n)\supset \ker(\frac{k}{2^n}+\Z)$ our homomorphism is continuous. Hence we have the equality between both groups.

\noindent \textbf{This group is called Pr\"ufer's group and it is denoted by $\Z(2^\infty)$.}

\section{Definition of $S$-topologies}

\noindent Let $G$ be a topological group. Our aim is to define topologies on $G$.
\noindent We choose $S\subset \Hom (G,\T)$ and set $\mathcal{U}= \{\displaystyle\bigcap_{\chi\in S}\chi^{-1}(\T_{n})\mid n\in\N\}$. It would be nice that these neighbourhoods were a neighbourhood basis at $0$. We should verify conditions in \ref{ctg}; we will denote $U_{n}=\displaystyle\bigcap_{\chi\in S}\chi^{-1}(\T_{n})$:

(i) $0\in U_{n}, \forall n\in\N$.

\noindent Trivial, since $\chi (0)=0+\Z, \forall\chi\in \Hom(G,\T)$.

(ii) $\forall U\in\mathcal{U}, \exists V\in \mathcal{U}$ such that $-V\subset U$.

\noindent We will see that $U_{n}$ is symmetric for every $n$. 
$\T_{n}$ is symmetric for every $n$. We also know that the inverse image by a continuous function of a symmetric subset is again a symmetric subset; hence $\chi^{-1}(\T_n)$ is symmetric. Since intersection of symmetric sets is again symmetric, we can assure that $U_n$ is symmetric.Thus, if $U\supset U_n$, then $V=U_n$ suits our purposes.

(iii) $\forall U\in\mathcal{U}, \exists V\in\mathcal{U}$ such that $V+V\subset U$.

\noindent Fix $U=U_{n}$. We will see that we can choose $V=U_{2n}$. Let $x, y \in U_{2n}=\{ z\mid \chi (z)\in \T_{2n}, \forall \chi \in S\}$ and let $\chi\in S$. We compute $\chi(x+y)=\chi(x)+\chi(y)\in\T_{2n}+\T_{2n}=\T_{n}$. Then $x+y\in U_{n}$, as desired.

(iv) $\forall U, V\in \mathcal{U}, \exists W \in\mathcal{U}$ such that $W\subset U\cap V$.

\noindent Let $U=U_{n}$ and $V=U_{m}$ where $n, m\in\N$. Let $k=\max(n,m)$ and $W=U_{k}$. By the definition of the neighbourhoods we have $W=U\cap V$.

\noindent By \ref{ctg}, there exists a unique group topology $\tau_{S}$
for which $\mathcal{U}$ is a neighbourhood basis at $0$.

\noindent Now, we will see that $S\subset U_{1}\tr=(\displaystyle\bigcap_{\chi\in S}\chi^{-1}(\T_{+}))\tr$. Choose $\varphi\in S$ and $x\in\displaystyle\bigcap_{\chi\in S}\chi^{-1}(\T_{+})\subset\varphi^{-1}(\T_{+})$. Hence, $\varphi(x)\in\T_{+}$.

\section{Introduction to $S$-topologies in $\Z$}

\noindent Since we are interested in topologies on $\Z$ , we should choose $S\subset \T=\Hom(\Z,\T)$.
Here $U_n=\displaystyle\bigcap_{\chi\in S}\chi^{-1}(\T_n)=\{k\in\Z$ such that  $\forall z+\Z\in S,
z\cdot k+\Z\in \T_n\}$. Related to these questions we can find this theorem in
\cite{unpublished}:

\begin{theorem}\label{tau_Sesdiscreta}
Let $\{x_n\}$ be a strictly decreasing sequence in $(0,\frac{1}{2}]$, where $x_n\rightarrow 0$,  such that $(\frac{x_n}{x_{n+1}})_{n\in\N}$ is bounded. Let $S=\{x_n+\Z\ |\ n\in\N \}$. Then $\tau_S$ is discrete.
\end{theorem}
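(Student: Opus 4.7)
To show that $\tau_S$ is discrete it suffices to produce some $n\in\N$ with $U_n:=\{k\in\Z\mid kx_m+\Z\in\T_n\text{ for all }m\}=\{0\}$. Let $M$ be an upper bound for the sequence $(x_n/x_{n+1})$, so that $x_{m+1}\geq x_m/M$ for every $m$. I will prove that if $n$ is chosen large enough, namely $n>M/2$ and $n>1/(4x_1)$, then no nonzero integer $k$ lies in $U_n$.

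Fix such an $n$ and suppose $k\in\Z\setminus\{0\}$. The argument splits according to the size of $|k|x_1$. If $|k|x_1>1/2$, let $m^\ast$ be the largest index with $|k|x_{m^\ast}>1/2$ (well defined since $|k|x_m\to 0$). By the choice of $m^\ast$ we have $|k|x_{m^\ast+1}\leq 1/2$, and by the ratio estimate
\[
|k|x_{m^\ast+1}\ \geq\ \frac{|k|x_{m^\ast}}{M}\ >\ \frac{1}{2M}.
\]
Thus $kx_{m^\ast+1}\in[-1/2,1/2]$ and its distance to the nearest integer equals $|k|x_{m^\ast+1}$, which is strictly greater than $1/(2M)>1/(4n)$. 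Consequently $kx_{m^\ast+1}+\Z\notin\T_n$, so $k\notin U_n$.

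If instead $|k|x_1\leq 1/2$, then since $(x_m)$ is decreasing we have $|k|x_m\leq 1/2$ for every $m$, so that the distance from $kx_m$ to the nearest integer is exactly $|k|x_m$. For $k\in U_n$ we would then need $|k|x_m\leq 1/(4n)$ for all $m$, and in particular $|k|\leq 1/(4nx_1)<1$ by the choice of $n$; as $|k|$ is a nonnegative integer this forces $k=0$, contradicting $k\neq 0$.

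Combining both cases, $U_n=\{0\}$ for $n$ chosen as above, whence $\{0\}$ is open in $\tau_S$ and $\tau_S$ is discrete. The only delicate point in the argument is the middle case: the boundedness of the ratios $x_m/x_{m+1}$ is what prevents $|k|x_m$ from jumping directly from being larger than $1/2$ to being smaller than $1/(4n)$, and this is precisely where the hypothesis on $S$ is used.
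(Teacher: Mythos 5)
Your proof is correct. The heart of it --- locating the index $m^\ast$ at which $|k|x_m$ first drops below a fixed threshold and using the bound $x_m/x_{m+1}\leq M$ to conclude that $|k|x_{m^\ast+1}$ is trapped in the forbidden annulus $(\tfrac{1}{2M},\tfrac12]$, hence $kx_{m^\ast+1}+\Z\notin\T_n$ once $n>M/2$ --- is exactly the paper's first step, which runs the same crossing argument with threshold $\tfrac14$ instead of $\tfrac12$ to show that the basic neighbourhood $U_m$ (for $m$ the ratio bound) is contained in $[-\tfrac{1}{4x_1},\tfrac{1}{4x_1}]\cap\Z$. Where you diverge is in finishing: the paper keeps $m$ fixed, notes $U_m$ is finite, and then passes to the smaller neighbourhood $U_{lm}$ with $4lx_1>1$, using the relation that $j\in U_{lm}$ forces $j,2j,\dots,lj\in U_m$ and hence $|j|\leq\tfrac{1}{4lx_1}<1$; you instead choose the single index $n$ large enough at the outset (also requiring $n>\tfrac{1}{4x_1}$) and dispose of the small-$|k|$ case directly, observing that $|k|x_1\leq\tfrac12$ makes the distance from $kx_1$ to $\Z$ equal to $|k|x_1$, so membership in $U_n$ forces $|k|\leq\tfrac{1}{4nx_1}<1$. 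Your finish is more elementary and avoids the two-parameter bookkeeping; the paper's finish is an instance of the $\left(\frac{1}{l}\right)U$ manipulation that it reuses elsewhere (e.g.\ in Lemma \ref{caracterizacion1/nU}), so it fits the memoir's general machinery. Both are complete proofs, and both make it equally transparent that the boundedness of $(x_n/x_{n+1})$ is what prevents $|k|x_m$ from skipping over the annulus.
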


\begin{proof}
Since $(\frac{x_n}{x_{n+1}})$ is bounded, there exists $m\in\N$, with $m>1$ such that
$\frac{x_n}{x_{n+1}}\leq m,\ \forall n \in\N$. Choose $k\in\Z$ where $k\in\displaystyle\bigcap_{\chi\in S}\chi^{-1}(\T_m)$. Since $\T_m$ is symmetric we can pick $k\geq 0$.

\noindent First, we prove $k\leq\frac{1}{4x_1}$. By contradiction, suppose that $k>\frac{1}{4x_1}$. Since $(x_n)_{n\in\N}$ is strictly decreasing, there exists a unique $n\in\N$ such that $\frac{1}{4x_n}<k\leq \frac{1}{4x_{n+1}}$. Multiplying by $x_{n+1}$ we obtain that $\frac{1}{4}\frac{x_{n+1}}{x_n}<kx_{n+1}\leq\frac{1}{4}$. Since $m$ is a bound for $\frac{x_n}{x_{n+1}}$ we get $\frac{1}{4m}<kx_{n+1}\leq\frac{1}{4}$. Hence $kx_{n+1}+\Z\notin \T_m$. Choosing $\chi:k\mapsto k\cdot x_{n+1}+\Z$ we get that $\chi(k)\notin\T_m$. Which contradicts $k\in\displaystyle\bigcap_{\chi\in S}\chi^{-1}(\T_m)$.

\noindent Hence, $\displaystyle\bigcap_{\chi\in S}\chi^{-1}(\T_m)\subset\Z\cap[-\frac{1}{4x_1},\frac{1}{4x_1}]$,
 which is a finite subset. Now, fix $l\in\N$ such that
$4lx_1>1$. Considering $j\in\displaystyle\bigcap_{\chi\in S}\chi^{-1}(\T_{lm } )$ we get $j,2j,\dots ,lj\in \displaystyle\bigcap_{\chi\in S}\chi^{-1}(\T_m)\subset\displaystyle[-\frac{1}{4},\frac{1}{4}]\cap\Z $.
Equivalently:  $j\in[-\frac{1}{4lx_1},\frac{1}{4lx_1}]\cap\Z$, by the choice of $l$ this means that $j=0$. $\displaystyle\bigcap_{\chi\in S}\chi^{-1}(\T_{lm})=\{0\}$. Hence $\tau_S$ is discrete.

\end{proof}

\noindent However, this proposition does not give any information if $(\frac{x_n}{x_{n+1}})$ is an unbounded sequence. In the following pages, we will study some $S$-topologies where $S=\{x_n\mid n\in\N\}$ with $(\frac{x_n}{x_{n+1}})$ unbounded.

\section{The $S$-topology corresponding to $S=\{2^{-n^2}+\Z\mid n\in\N\}\subset\T$}

\noindent  We remember that in order to get a $S$-topology in $\Z$, we should choose $S\subset \Hom(\Z,\T)\approx \T$. We will consider, in particular sequences $(a_n)_{n\in\N}\subset\N$ strictly increasing and we choose $S=\{2^{-a_n}+\Z\mid n\in\N\}$. We have already seen that if $\mid\frac{2^{-a_n}}{2^{-a_{n+1}}}\mid= \mid\frac{2^{a_{n+1}}}{2^{a_{n}}}\mid= 2^{a_{n+1}-a_n}$ is bounded, then $\tau_S$ is discrete. 

\noindent Then, $\tau_S$ has then as neighbourhood basis at $0:\ V_{S,n}=\{k\in\Z\mid\forall z+\Z\in S, \ z\cdot k+\Z\in 	\T_n\}$.

\noindent Our objective will be to find $(\Z,\tau_S)^\wedge$ where $(a_{n+1}-a_n)_{n\in \N}$ is an unbounded sequence. For example, we could consider $S_1=\{2^{-2^n}+\Z\mid n\in\N\}$, $S_2=\{2^{-n^2}+\Z\mid n\in\N\}$, $S_3=\{2^{-n!}+\Z\mid n\in\N\}$. We will focus our attention in the case $S=\{2^{-n^2}+\Z\mid n\in\N\}$.

\noindent In order to study the topology, we will study its convergent sequences. It will also be useful to find $(\Z,\tau_S)^\wedge$, since $\chi\in(\Z,\tau_S)^\wedge \Longleftrightarrow \forall (k_n)\subset\Z$ where $k_n\rightarrow 0$ in $\tau_S$, then $\chi(k_n)\rightarrow 0+\Z$.

\noindent Fix now $S=\{2^{-n^2}+\Z\mid n\in\N\}$. We want to find sequences converging to zero in $\tau_S$. For that purpose, we will find a characterization of those sequences.

\noindent $l_j\rightarrow 0$ in $\tau_S \Longleftrightarrow \forall m\in \N, \exists j_m$ such that $l_j\in V_{S,m}$ for all $j\geq j_m$. What does this mean? 
$l_j\in V_{S,m}=\{k\in \Z\mid \forall x+\Z\in S,\ kx+\Z\in\T_m\}= \{k\in \Z\mid \frac{1}{2^{n^2}}k+\Z\in\T_m, \forall n\in\N\}$.
This means, $\frac{l_j}{2^{n^2}}+\Z\in\T_m \ \forall n\in\N$. Hence

\centerline{$l_j\rightarrow 0$ in $\tau_S \Longleftrightarrow \forall m\in\N \ \exists j_m,$ such that $\forall n\in\N , \frac{l_j}{2^{n^2}}+\Z\in\T_m$ for all $j\geq j_m$.}

\noindent Now, as we have different criteria for convergence in $\tau_S$ and $\tau_2$ we will see the realtionship between them.

\begin{proposition}
$l_j\rightarrow 0$ in $\tau_S \Rightarrow l_j\rightarrow 0$ in $\tau_2$. Consequently: $\tau_2\leq\tau_S$
\end{proposition}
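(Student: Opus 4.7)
The plan is to fix $m \in \N$ and produce an index $j_m$ such that $2^m \mid l_j$ for every $j \geq j_m$, which is exactly the characterization of $\tau_2$-null sequences recalled in the previous section. The key idea is that membership in a sufficiently small $\tau_S$-neighbourhood forces $l_j$ to be an exact integer multiple of some high power of $2$, not just close to one, because an integer of absolute value less than $1$ must vanish.

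First I would pick $n \in \N$ with $n^2 \geq m$ (for instance $n = m$). Next, I apply the hypothesis $l_j \to 0$ in $\tau_S$ with the sharpened index $k := 2^{n^2}$: by the convergence criterion established just before the proposition, there exists $j_m$ such that for all $j \geq j_m$ and all $n' \in \N$,
\[
\frac{l_j}{2^{n'^2}} + \Z \in \T_k = \left[-\tfrac{1}{4k}, \tfrac{1}{4k}\right].
\]
Specialising this to $n' = n$ gives an integer $s_j$ with $\bigl|\, l_j - s_j\cdot 2^{n^2}\bigr| \leq 2^{n^2}/(4k) = 1/4$. Since $l_j - s_j\cdot 2^{n^2}$ is an integer of absolute value strictly less than $1$, it must be $0$, so $2^{n^2} \mid l_j$ and a fortiori $2^m \mid l_j$, as required.

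For the consequence $\tau_2 \leq \tau_S$, I would observe that the same computation shows directly the inclusion of basic neighbourhoods
\[
V_{S,\,2^{n^2}} \subseteq 2^{n^2}\Z \subseteq 2^m \Z,
\]
so each $\tau_2$-basic neighbourhood of $0$ contains a $\tau_S$-basic neighbourhood of $0$, which is exactly what $\tau_2 \leq \tau_S$ means. I do not anticipate any serious obstacle: the whole argument hinges on the single numerical trade-off $k \geq 2^{n^2}$, chosen so that the interval $\T_k$ pulls back through multiplication by $2^{-n^2}$ to an interval of length less than $2$, which in $\Z$ collapses to a point. The only minor care is to make this uniform in $j$ by quantifying $k$ first and extracting $j_m$ from the convergence hypothesis second.
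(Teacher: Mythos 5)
Your proof is correct, and its skeleton is the same as the paper's: both arguments feed the sharpened parameter $2^{n^2}$ (your $k$, the paper's $m=2^{n_0^2}$) into the $\tau_S$-convergence criterion and then specialise to the single character $2^{-n^2}+\Z$. The two proofs part ways only at the last step. The paper unwinds $\T_{m}$ as ``the first $m$ multiples of $\frac{l_j}{2^{n_0^2}}+\Z$ all lie in $\T_+$,'' observes that this set is the cyclic subgroup $\langle\frac{l_j}{2^{n_0^2}}+\Z\rangle$, and invokes the standard fact that the only subgroup of $\T$ contained in $\T_+$ is trivial. You instead read $\T_k$ directly as the interval $[-\frac{1}{4k},\frac{1}{4k}]$ and note that $|l_j-s_j2^{n^2}|\leq\frac{1}{4}$ forces an integer to vanish --- a more elementary, purely numerical closing move that avoids the subgroup lemma. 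Your handling of the consequence is also slightly stronger than the paper's: the paper deduces $\tau_2\leq\tau_S$ from the implication on null sequences (implicitly using metrizability of $\tau_S$), whereas you exhibit the neighbourhood inclusion $V_{S,2^{n^2}}\subseteq 2^{n^2}\Z\subseteq 2^m\Z$ directly, which gives the comparison of topologies without any appeal to sequential arguments. Both routes are sound; yours is marginally more self-contained.
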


\begin{proof}
Let $n_0\in\N$. We will choose $m=2^{n_0^2}$. By hypothesis, there exists $j_m$ such that $\frac{l_j}{2^{n^2}}+\Z\in\T_m, \forall n\in\N,$ if $j\geq j_m$. Equivalently, $\frac{l_j}{2^{n^2}}+\Z,\frac{2l_j}{2^{n^2}}+\Z,\dots \frac{ml_j}{2^{n^2}}+\Z\in\T_+$. Which means,  $\frac{l_j}{2^{n^2}}+\Z,\frac{2l_j}{2^{n^2}}+\Z,\dots , \frac{2^{n_0^2}l_j}{2^{n^2}}+\Z\in\T_+; \forall n\in\N$. In particular, this is true for $n=n_0$, then $\frac{l_j}{2^{n_0^2}}+\Z,\frac{2l_j}{2^{n_0^2}}+\Z,\dots , \frac{ml_j}{2^{n_0^2}}+\Z\in\T_+$. But, $\{\frac{l_j}{2^{n_0^2}}+\Z,\frac{2l_j}{2^{n_0^2}}+\Z,\dots , \frac{ml_j}{2^{n_0^2}}+\Z\}=\langle\frac{l_j}{2^{n_0^2}}+\Z\rangle$. Since it is a subgroup contained in $\T_+$, it must be $\frac{l_j}{2^{n_0^2}}+\Z=0+\Z$. Therefore, $\frac{l_j}{2^{n_0^2}}\in\Z$; or, equivalently $2^{n_0^2}|l_j$. This was our convergence criterion in $\tau_2$.

\end{proof}

\noindent By this proposition, we get that if a sequence does not converge in $\tau_2$ it will neither converge in $\tau_S$, which is a useful fact, because the convergence criterion in $\tau_2$ is much easier than the one in $\tau_S$.

\noindent From here it is also derived the continuity of

\centerline {$\id:(\Z,\tau_S)\rightarrow (\Z,\tau_2)$.}

\noindent We now focus on homomorphisms between dual groups induced by continuous homomorphisms between the original groups.

\begin{deff}
Let $\varphi:G\rightarrow H$ be a continuous homomorphism between Hausdorff topological groups. Define $\varphi^\wedge : H^\wedge \rightarrow G^\wedge$ as $\chi \mapsto \chi \circ \varphi$. It is called the dual homomorphism.
\end{deff}

\begin{proposition}
Let $\varphi: G\rightarrow H$ be a continuous homomorphism. Then $\varphi^\wedge$ is continuous
\end{proposition}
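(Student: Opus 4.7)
The plan is to reduce the continuity of $\varphi^\wedge$ to checking preimages of basic neighbourhoods at the identity. By an earlier lemma, the family $\{K\tr \mid K\subset G \text{ compact}\}$ is a neighbourhood basis at $0$ in $G^\wedge$, so it suffices to show that for each compact $K\subset G$ the preimage $(\varphi^\wedge)^{-1}(K\tr)$ is a neighbourhood of $0$ in $H^\wedge$.

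Given such a compact $K\subset G$, the continuity of $\varphi$ ensures that $\varphi(K)\subset H$ is compact, and hence $\varphi(K)\tr$ is a basic neighbourhood of $0$ in $H^\wedge$. The key step I would carry out is to verify the identity
$$(\varphi^\wedge)^{-1}(K\tr) = \varphi(K)\tr,$$
which immediately delivers the required neighbourhood. This identity is a direct unwinding of definitions: for $\chi\in H^\wedge$, the condition $\varphi^\wedge(\chi)=\chi\circ\varphi\in K\tr$ reads $\chi(\varphi(K))\sq\T_+$, which is exactly the statement $\chi\in\varphi(K)\tr$. Since continuity at $0$ implies continuity everywhere for homomorphisms of topological groups, and since $\varphi^\wedge$ is easily seen to be a homomorphism (it sends $\chi_1\chi_2$ to $(\chi_1\chi_2)\circ\varphi=(\chi_1\circ\varphi)(\chi_2\circ\varphi)$), this will complete the argument.

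I do not anticipate any real obstacle here: the whole proof is essentially a definitional verification, the only non-trivial ingredient being the polar description of the compact-open topology, which is already in place in the paper. The single point requiring mild attention is using the polar neighbourhood basis $\{K\tr\}$ rather than the subbasic sets $V_{K,U}$, so that the inclusion can be read off in one step instead of being rebuilt from scratch.
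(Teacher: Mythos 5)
Your proposal is correct and follows essentially the same route as the paper: both reduce to the basic neighbourhoods $K\tr$, use that $\varphi(K)$ is compact so $\varphi(K)\tr$ is a basic neighbourhood of $0$ in $H^\wedge$, and unwind the definition of the polar to compare $\varphi(K)\tr$ with $(\varphi^\wedge)^{-1}(K\tr)$. The only cosmetic difference is that you establish the equality $(\varphi^\wedge)^{-1}(K\tr)=\varphi(K)\tr$ while the paper contents itself with the inclusion $\varphi(K)\tr\sq(\varphi^\wedge)^{-1}(K\tr)$, which is all that continuity requires.
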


\begin{proof}
As usual, we choose a neighbourhood of $G^\wedge$, compute its inverse image and see if it is a neighbourhood of $H^\wedge$. Let $K\subset G$ be a compact subset, $K\tr\subset\mathcal{U}_{G^\wedge}(0) $.

\noindent In order to see that $(\varphi^\wedge)^{-1}(K\tr)\in \mathcal{U}_{H^\wedge}(0)$, we compute $\varphi(K)\tr$ and try to verify $\varphi(K)\tr\subset (\varphi^\wedge)^{-1}(K\tr)$. 

\noindent $\varphi(K)$ is compact and, hence $\varphi(K)\tr$ is a neighbourhood of $0$ in $H^\wedge$

\noindent Let $\psi\in\varphi(K)\tr \Longleftrightarrow \psi(\varphi(K))\subset\T_+$.

\noindent We need that $\varphi^\wedge(\psi)\in K\tr$. We fix $x\in K$ and compute $\varphi^\wedge(\psi(x))=\psi(\varphi(x))\in \psi(\varphi(K))\subset \T_+$. And the result follows.

\end{proof}

\noindent In our case, the identity function $\id:(\Z,\tau_S)\rightarrow (\Z,\tau_2)$ induces a dual homomorphism:

\centerline {$\id^\wedge:(\Z,\tau_2)^\wedge =\Z(2^\infty)\rightarrow (\Z,\tau_S)^\wedge$}

\noindent What else can we say about $\id^\wedge$? We will prove an easy but useful result:

\begin{proposition}
If $\varphi:G\rightarrow H$ is onto, then $\varphi^\wedge:H^\wedge\rightarrow G^\wedge$ is one-to-one.
\end{proposition}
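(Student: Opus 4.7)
The plan is to verify injectivity of $\varphi^\wedge$ by computing its kernel and showing that it is trivial; since $\varphi^\wedge$ is a group homomorphism between abelian groups, this suffices.

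First I would observe that $\varphi^\wedge$ is indeed a homomorphism: for $\psi_1,\psi_2\in H^\wedge$ and any $x\in G$, one has $\varphi^\wedge(\psi_1\psi_2)(x)=(\psi_1\psi_2)(\varphi(x))=\psi_1(\varphi(x))\psi_2(\varphi(x))=(\varphi^\wedge(\psi_1)\varphi^\wedge(\psi_2))(x)$. (This is a one-line check that I would mention but not belabour.) Consequently $\varphi^\wedge$ is one-to-one if and only if $\ker(\varphi^\wedge)=\{1_{H^\wedge}\}$, where $1_{H^\wedge}$ denotes the trivial character $y\mapsto 0+\Z$.

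Next I would take $\psi\in\ker(\varphi^\wedge)$, that is, $\varphi^\wedge(\psi)=\psi\circ\varphi$ is the trivial character on $G$. This means $\psi(\varphi(x))=0+\Z$ for every $x\in G$. Now I would invoke the surjectivity hypothesis: given any $y\in H$, by the assumption that $\varphi$ is onto there exists $x\in G$ with $\varphi(x)=y$, and therefore $\psi(y)=\psi(\varphi(x))=0+\Z$. Hence $\psi=1_{H^\wedge}$, so the kernel is trivial and $\varphi^\wedge$ is injective.

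I do not foresee any real obstacle here; the statement is purely algebraic and follows from the universal property of surjections (epimorphisms cancel on the left). The only subtlety worth flagging is that continuity of $\psi$ plays no role in the argument, only the fact that it is a homomorphism; topological hypotheses enter merely to ensure that $\varphi^\wedge$ lands in the dual (which was proved in the preceding proposition).
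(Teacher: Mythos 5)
Your argument is correct and is essentially the paper's own proof: both take a character in the kernel of $\varphi^\wedge$ and use surjectivity of $\varphi$ to conclude it vanishes on all of $H$. The explicit remark that $\varphi^\wedge$ is a homomorphism (so that a trivial kernel suffices) is a small but welcome addition that the paper leaves implicit.
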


\begin{proof}
Take $\chi\in\ker\varphi_{G^\wedge}$. Observe that: $0_{G^\wedge}=\varphi^\wedge(\chi)=\chi o\varphi \Longleftrightarrow\ \forall x\in G: \chi(\varphi(x))=0+\Z$.

\noindent Since $\varphi$ is onto, we obtain $\forall h\in H, \chi(h)=0+\Z$. That means, $\chi=0_{H^\wedge}$.

\noindent Hence $\varphi^\wedge$ is one-to-one.

\end{proof}

\noindent Since $\id:(\Z,\tau_S)\rightarrow (\Z,\tau_2)$ is onto, $\id^\wedge:(\Z,\tau_2)^\wedge \rightarrow (\Z,\tau_S)^\wedge$ is one-to-one. Hence, we have proved, for our $S$, that the Pr\"ufer's group $\Z(2^\infty)$ can be embedded in $(\Z,\tau_S)^\wedge$. For any other $S$ we would only need to prove the continuity of the corresponding identity function, as we will see in next section.

\noindent With this new information we keep on seeking sequences converging to $0$ in $\tau_S$.

\noindent We will take advantage on $l_j\rightarrow 0$ in $\tau_S \Rightarrow l_j\rightarrow 0$ in $\tau_2$. Hence, $l_j\rightarrow 0$ implies that $\forall n\in\N\ \exists j_n,\ 2^{n^2}|l_j$ for all $j\geq j_n$. Choose $j_n$ minimal, and such that $2^{n^2}|l_j \ j\geq j_n$. We get an increasing sequence $j_1\leq j_2\leq j_3\cdots$

\noindent Define:

\noindent $M_0:=\{1,2,\cdots,j_1-1\}$.

\[
M_n=\left\{
\begin{array}{c@{\quad:\quad}c}
\{j_n\}& j_n=j_{n+1} \\
\{j_n,j_n+1,\cdots j_{n+1}-1\} & j_n<j_{n+1}%
\end{array}%
\right.
\]

\noindent  $S_n=\displaystyle\max_{j\in M_n}\frac{|l_j|}{2^{(n+1)^2}}\in \R$

\begin{proposition}
$S_n\rightarrow 0$ in $\R \Rightarrow l_j\rightarrow 0$ in $\tau_S$.
\end{proposition}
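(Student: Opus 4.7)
The plan is to unfold the $\tau_S$-convergence criterion and then verify it directly using the block decomposition $(M_n)$. Recall that $l_j \to 0$ in $\tau_S$ is equivalent to: for every $m \in \N$ there exists an index $J$ such that $\frac{l_j}{2^{n^2}} + \Z \in \T_m$ for all $j \geq J$ and all $n \in \N$.

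First I would fix $m \in \N$, use the hypothesis $S_n \to 0$ in $\R$ to pick $N_0$ with $S_n \leq \frac{1}{4m}$ for every $n \geq N_0$, and set $J := j_{N_0}$. For each $j \geq J$ with $l_j \neq 0$ (the case $l_j = 0$ is immediate), I would take $N := N(j)$ to be the largest natural number satisfying $j_N \leq j$; such an $N$ is finite because the set $\{n : 2^{n^2} \mid l_j\}$ is bounded above, and $N \geq N_0$ by the choice of $J$. Maximality forces $j_N \leq j < j_{N+1}$, so $j$ lies in the non-degenerate block $M_N = \{j_N, j_N+1, \dots, j_{N+1}-1\}$, which neatly sidesteps the overlap subtlety in the definition of $M_n$ when $j_n = j_{n+1}$.

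The heart of the argument is then a split by regime in $n$. For $n \leq N$ the divisibility $2^{n^2} \mid 2^{N^2} \mid l_j$ gives $\frac{l_j}{2^{n^2}} \in \Z$, so $\frac{l_j}{2^{n^2}} + \Z = 0 + \Z \in \T_m$. For $n \geq N+1$ I would factor $\frac{l_j}{2^{n^2}} = \frac{l_j}{2^{(N+1)^2}} \cdot 2^{(N+1)^2 - n^2}$ and combine the trivial inequality $2^{(N+1)^2 - n^2} \leq 1$ with $\bigl|\frac{l_j}{2^{(N+1)^2}}\bigr| \leq S_N \leq \frac{1}{4m}$ (the first bound coming from $j \in M_N$ and the definition of $S_N$) to see that, as a real number, $\frac{l_j}{2^{n^2}}$ lies in $[-\frac{1}{4m}, \frac{1}{4m}] \subset [-\frac{1}{2}, \frac{1}{2}]$, so $\frac{l_j}{2^{n^2}} + \Z \in \T_m$ as well. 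Both regimes together yield $l_j \in V_{S,m}$ for every $j \geq J$, which is the desired conclusion.

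The main obstacle is essentially bookkeeping: one has to identify the correct block $M_N$ containing $j$ so that the defining bound $S_N \geq \frac{|l_j|}{2^{(N+1)^2}}$ is actually available at the relevant $n$. Once that alignment is set up, the decisive analytic ingredient is the superlinear growth of the exponents $n^2$, which makes $2^{(N+1)^2 - n^2} \leq 1$ for all $n \geq N+1$ and therefore lets the single quantity $S_N$ control $\frac{l_j}{2^{n^2}}$ uniformly for all larger $n$ at once.
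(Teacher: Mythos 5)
Your proof is correct and follows essentially the same route as the paper's: fix $m$, choose the threshold from $S_n \to 0$, locate $j$ in its block $M_N$, and split into the divisibility regime $n \leq N$ and the tail regime $n \geq N+1$ controlled by $S_N$. The only difference is that you justify more carefully why each sufficiently large $j$ lies in some block $M_N$ with $N$ past the threshold, a point the paper's proof passes over implicitly.
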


\noindent A similar result can be found in page 115 of \cite{tesislorenzo}, but it is not correct; in fact we have a counterexample below.

\begin{proof}
Let $m\in\N$. By hypothesis, $\exists n_0 \mbox{ such that } S_n<\frac{1}{4m}$ where $n\geq n_0$.
Let $j \in M_n$ for some $n\ge n_0$. We want to see if $l_j\in V_{S,m}$. For that, it must be $\frac{l_j}{2^{k^2}}+\Z\in\T_m$.

\noindent If $k\leq n$ then $2^{k^2}|2^{n^2}|l_j \Longrightarrow \frac{l_j}{2^{k^2}}+\Z=0+\Z\in\T_m$.

\noindent If $k\geq n+1$ we have that $0\leq \frac{|l_j|}{2^{k^2}}\leq \frac{|l_j|}{2^{(n+1)^2}}\leq S_n<\frac{1}{4m}\Longrightarrow \frac{l_j}{2^{(n+1)^2}}\in \T_m$

\end{proof}

\noindent Although the result mencioned above is not true, we can prove the following result:

\begin{proposition}
Let:

(i) $S_n\rightarrow 0$ in $\R$

(ii) $l_j\rightarrow 0$ in $\tau_S$

(iii) $S_n+\Z \rightarrow 0+\Z$ in $\T$

\noindent Then (i) $\Longrightarrow$ (ii) and (ii)  $\Longrightarrow$ (iii)
\end{proposition}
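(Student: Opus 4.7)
The plan is to establish the two implications separately. For $(i) \Rightarrow (ii)$ the argument is the one already given in the preceding proposition: assuming $S_n \to 0$ in $\R$, for each $m$ pick $n_0$ with $S_n < \frac{1}{4m}$ when $n \geq n_0$, and verify $l_j \in V_{S,m}$ for $j \in M_n$ with $n \geq n_0$ by splitting on $k \leq n$ (where $2^{k^2} \mid 2^{n^2} \mid l_j$ forces $\frac{l_j}{2^{k^2}}+\Z = 0+\Z \in \T_m$) versus $k \geq n+1$ (where $\frac{|l_j|}{2^{k^2}} \leq \frac{|l_j|}{2^{(n+1)^2}} \leq S_n < \frac{1}{4m}$). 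So I would just cite that proof.

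The new content is $(ii) \Rightarrow (iii)$. First I would dispose of the trivial case in which $(l_j)$ is eventually zero: then $S_n = 0$ for large $n$ and the conclusion is immediate. Otherwise, the previous proposition yields $l_j \to 0$ in $\tau_2$, and combined with $(l_j)$ not being eventually zero this forces $j_n \to \infty$ (were $j_n \leq M$ for all $n$, then every $l_j$ with $j \geq M$ would be divisible by $2^{n^2}$ for every $n$, hence $l_j = 0$, a contradiction).

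Now fix $m \in \N$. By $\tau_S$-convergence there exists $N_m$ with $l_j \in V_{S,m}$ for every $j \geq N_m$; equivalently, $\frac{l_j}{2^{k^2}}+\Z \in \T_m$ for every $k \in \N$ and every such $j$. Using $j_n \to \infty$, choose $n_0$ with $j_{n_0} \geq N_m$. For every $n \geq n_0$ and every $j \in M_n$ one has $j \geq j_n \geq N_m$, so specializing to $k = n+1$ gives $\frac{l_j}{2^{(n+1)^2}}+\Z \in \T_m$, whence by symmetry $\frac{|l_j|}{2^{(n+1)^2}}+\Z \in \T_m$.

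The delicate step, which I view as the main obstacle, is passing from these pointwise bounds on the individual elements of $M_n$ to a bound on the supremum $S_n$: in general the ordinary $\R$-maximum of a finite family of reals whose classes modulo $\Z$ lie in $\T_m$ need not itself have its class in $\T_m$. The argument works here only because $M_n$ is finite and non-empty, so the maximum is \emph{attained}, i.e. $S_n = \frac{|l_{j^*}|}{2^{(n+1)^2}}$ for some $j^* \in M_n$, and therefore $S_n + \Z \in \T_m$. Since this holds for every $n \geq n_0$, one concludes $S_n + \Z \to 0 + \Z$ in $\T$, completing $(ii) \Rightarrow (iii)$.
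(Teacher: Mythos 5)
Your proof is correct. Note that the paper states this proposition without giving any proof: the implication (i) $\Rightarrow$ (ii) is exactly the proposition proved immediately before it, and (ii) $\Rightarrow$ (iii) is simply asserted before the discussion turns to whether (ii) $\Rightarrow$ (i). So for the substantive half there is nothing in the paper to compare against, and your argument supplies the missing content. The chain you use is the natural one: (ii) gives $l_j\rightarrow 0$ in $\tau_2$, hence (outside the degenerate eventually-zero case, which you rightly treat separately) $j_n\rightarrow\infty$; for $n$ large every $j\in M_n$ lies beyond the threshold $N_m$ for membership in $V_{S,m}$; specializing that membership condition to $k=n+1$ and using the symmetry of $\T_m$ gives $\frac{\mid l_j\mid}{2^{(n+1)^2}}+\Z\in\T_m$ for each $j\in M_n$; and since $M_n$ is finite and non-empty, the maximum $S_n$ equals one of these quantities, so $S_n+\Z\in\T_m$. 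Your remark that one cannot in general pass from ``each class lies in $\T_m$'' to ``the class of the supremum lies in $\T_m$'' identifies the right point of care --- it is precisely why (iii) is genuinely weaker than (i) --- and the attainment of the maximum over the finite set $M_n$ resolves it.
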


\noindent Obviously, (iii)$\nRightarrow$(i), hence the three conditions are not equivalent, now we are interested in what happens with (ii)$\Rightarrow$(i). In fact we have two examples in which we can see that (ii)$\nRightarrow$(i).

\noindent The ideas and examples up to the end of the section were obtained in collaboration with Lydia Aussenhofer during her stay at the UCM.

\noindent We are seeking examples of (ii) $\nRightarrow$ (i). For that we will construct sequences in the form $l_j=2^{j^2}\cdot a_j$ where $a_j$ will be an odd number for every $j$. In this way we will get that $M_n=\{n\};\ j_n=\{n\};\ y\ S_n=\frac{l_n}{2^{(n+1)^2}}$.

\begin{example} $(S_n)_{n\in\N}$ bounded.

\noindent  We want that $S_n \rightarrow 1$. We know that $S_n=\frac{l_n}{2^{(n+1)^2}}=\frac{2^{n^2}\cdot a_n}{2^{n^2+2n+1}}= \frac{a_n}{2^{2n+1}}\approx 1$. So, we fix $a_n=2^{2n+1}-1$ (remember that it must be an odd number), so $l_j=2^{(j+1)^2}-2^j$.

\noindent $S_n+\Z=\frac{2^{2n+1}-1}{2^{2n+1}}+\Z= 1-\frac{1}{2^{2n+1}}+\Z\rightarrow 0+\Z$ in $\T$. (With this example we do not know if (iii)$\Longrightarrow $(ii) is false or not). But $S_n\nrightarrow 0$ in $\R$. (with this we will get (ii) $\nRightarrow$ (i))

\noindent We will see that, in fact, $l_j\rightarrow 0$ in $\tau_S$: Let $m\in\N$. For $j_0=m$ and $j\ge j_0$ we have $2^{-2j-1}+\Z\in\T_m$.

\noindent As before, for $n\leq j$ the proof is completely trivial.

\noindent We have in fact $n=j+k; k\geq 1 $

\noindent $\frac{l_j}{2^{(j+k)^2}}+\Z\in \T_m$?

\noindent $\frac{l_j}{2^{(k+j)^2}}=\frac{2^{(j+1)^2}}{2^{(j+k)^2}}-\frac{2^{j^2}}{2^{(j+k)^2}}=
\frac{2^{j^2+2j+1}}{2^{j^2+2jk+k^2}}-\frac{2^{j^2}}{2^{j^2+2jk+k^2}}= 2^{-2j(k-1)-k^2+1}-2^{-2jk-k^2}$

\noindent If $k=1,\ 2^{-2j(k-1)-k^2+1}-2^{-2jk-k^2}=2^0-2^{-2j-1}$. But, $2^0-2^{-2j-1}+\Z=-2^{-2j-1}+\Z\in\T_m$.

\noindent If $k\geq 2$
\[
\left\{
\begin{array}{c}
-2j(k-1)-k^2+1<<-2j \Longrightarrow  2^{-2j(k-1)-k^2+1}+\Z\in\T_{2m}\\
y\\
-2jk-k^2<<-2j \Longrightarrow -2^{-2jk-k^2}+\Z\in\T_{2m}%
\end{array}%
\right.
\]

\noindent Hence, $2^{-2j(k-1)-k^2+1}-2^{-2jk-k^2}+\Z\in\T_{2m}+\T_{2m}=\T_m$.

\end{example}

\begin{example} $(S_n)_{n\in\N}$ unbounded.

\noindent We will fix $S_n$ increasing as n; $S_n=\frac{2^{n^2}\cdot a_n}{2^{(n+1)^2}}=\frac{a_n}{2^{2n+1}}\approx n \Longrightarrow a_n\approx n\cdot 2^{2n+1}$.

\noindent As we want that $a_n$ be an odd number we get $a_n=n2^{2n+1}-1$. Hence $l_j=2^{j^2}(j2^{2j+1}-1)=j2^{(j+1)^2}-2^{j^2}$.

\noindent Obviously $S_n\nrightarrow 0$ in $\R$.

\noindent We will see $l_j\rightarrow 0$ in $\tau_S$. As before, we want that $\frac{l_j}{2^{n^2}}+\Z\in\T_m$; and, also as before, the case $n\leq j$ does not matter. Now, we consider $m_1$ such that $m_12^{-2m_1}<\frac{1}{8m}$. Let $j_0=\max(m,m_1)$ and $j\ge j_0$.

\noindent $\frac{l_j}{2^{(k+j)^2}}=\frac{j2^{(j+1)^2}}{2^{(j+k)^2}}- \frac{2^{j^2}}{2^{(j+k)^2}}= j2^{-2j(k-1)-k^2+1}-2^{-2jk-k^2}$

\noindent If $k=1$ then $j2^{-2j(k-1)-k^2+1}-2^{-2jk-k^2}=j-2^{-2j-1}$ and $j-2^{-2j-1}+\Z=-2^{-2j-1}+\Z\in\T_m$.

\noindent If $k>1$,

\noindent $-2j(k-1)-k^2+1<<-2j$; hence $j2^{-2j(k-1)-k^2+1}\leq j2^{-2j} \leq \frac{1}{8m}$.  Thus, $j2^{-2j(k-1)-k^2+1}+\Z\in\T_{2m}$.

\noindent $-2jk-k^2<<-2j$. Then, $-2^{-2jk-k^2}+\Z\in\T_{2m}$.

\noindent $j2^{-2j(k-1)-k^2+1}-2^{-2jk-k^2}+\Z\in\T_{2m}+\T_{2m}=\T_m$.

\noindent Thus, $l_j\rightarrow 0$ in $\tau_S$.

\end{example}

\noindent The following example shows that (iii)$\nRightarrow$(ii).

\begin{example}

\noindent Define: 

\[l_j=\left\{
\begin{array}{c@{\quad:\quad}c}
2^{(n+1)^2}& j=n^2-2 \\
2^j & otherwise%
\end{array}%
\right.
\]

\noindent $j_1=1$, $j_2=4$, $j_3=9$, $\dots $, $j_n=n^2$.

\noindent $M_1=\{1,2,3\}$, $M_2=\{4,5,6,7,8\}$, $M_3=\{9,\dots , 15\}$, $\cdots$, $M_n=\{n^2,\dots ,(n+1)^2-1\}$.

\noindent $S_1=\frac{l_2}{2^{2^2}}=1$, $S_2=\frac{l_7}{2^{2^3}}=1$, $S_n=\frac{l_{n^2-2}}{2^{(n+1)^2}}=1$. Hence $S_n+\Z\rightarrow 0+\Z$.

\noindent It remains to see that $l_j\nrightarrow 0$ in $\tau_S$.

\noindent We observed that, $l_j\rightarrow 0$ in $\tau_S\Longleftrightarrow \forall m\in\N\, \exists j_m$ such that $\forall n$ $\frac{l_j}{2^{n^2}}+\Z\notin\T_m$ for all $j\geq j_m$.

\noindent Let $m=1$. It suffices to show that $A=\{j_m\in\N\mid \exists n$ such that $\frac{l_j}{2^{n^2}}+\Z\in\T_+$ for all $j\geq j_m\}$ is cofinal in $\N$.

\noindent Fix $n$. Find $j_n$ such that $\frac{l_{j_n}}{2^{n^2}}=\frac{1}{2}\Longleftrightarrow l_{j_n}=2^{n^2-1}\Longleftrightarrow j_n=n^2-1$.

\noindent Thus, $A=\{n^2-1\mid n>1\}$ and it is obviously cofinal.

\noindent Hence $l_j\nrightarrow 0$ in $\tau_S$

\end{example}

\section{General case $S=\{2^{-a_n}+\Z\ |\ n\in\N\}$}

\noindent We must impose certain restrictions to the sequences $(a_n)$. For example, $a_n\in\N, \ \forall n$, furthermore, we want $(a_n)$ to be a strictly increasing sequence, where $a_n\rightarrow \infty$ and $(a_{n+1}-a_n)_{n\in\N}$ be an unbounded sequence.

\noindent In this case: $V_{S,m}=\{k\in\Z|\ \forall x+\Z\in S,\ x\cdot k+\Z\in\T_m\}$.

\noindent We seek a convergence criterion, for our new $\tau_S$:
 $l_j\rightarrow 0$ in $\tau_S$
$\Longleftrightarrow \ \forall m\in\N \ \exists j_m \mbox{ such that }l_j\in V_{S,m}$ for all  $j\geq j_m\ \Longleftrightarrow\
\forall m\in\N,\ \exists j_m\mbox{ such that } \frac{l_j}{2^{a_n}}+\Z\in\T_m$ if $j\geq j_m,\ \forall n\in\N$

\noindent We want to see the sequencial continuity of $\id :(\Z,\tau_S)\rightarrow (\Z,\tau_2)$. Since $\tau_S$ is a metrizable topology, continuity and sequencial continuity are equivalent.
For that, we must prove the following result:

\begin{proposition}
$l_j\rightarrow 0$ in $\tau_S$ $\Rightarrow$ $l_j\rightarrow 0$ in $\tau_2$.
\end{proposition}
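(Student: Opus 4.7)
The plan is to adapt the argument already used in the special case $S=\{2^{-n^2}+\Z\mid n\in\N\}$, exploiting only the facts that $a_n\in\N$ and that $a_n\to\infty$. Fix an arbitrary $N\in\N$; I want to produce $j_N$ such that $2^N\mid l_j$ for every $j\geq j_N$. Since $(a_n)$ is strictly increasing to $\infty$, I may choose some $n_0\in\N$ with $a_{n_0}\geq N$, and then set $m:=2^{a_{n_0}}$. By the hypothesis $l_j\to 0$ in $\tau_S$, there exists $j_m$ such that for every $j\geq j_m$ and every $n\in\N$,
\[
\frac{l_j}{2^{a_n}}+\Z\in\T_m.
\]
Taking $n=n_0$ and unwinding the definition of $\T_m$, this means that $k\cdot\frac{l_j}{2^{a_{n_0}}}+\Z\in\T_+$ for all $k=1,2,\dots,m=2^{a_{n_0}}$.

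The key observation is then a subgroup argument. The element $\frac{l_j}{2^{a_{n_0}}}+\Z\in\T$ has order dividing $2^{a_{n_0}}=m$, so the cyclic subgroup it generates is exactly $\{k\cdot\frac{l_j}{2^{a_{n_0}}}+\Z : k=1,\dots,m\}$, which by the previous step lies entirely inside $\T_+$. Since $\{0+\Z\}$ is the only subgroup of $\T$ contained in $\T_+$, we conclude $\frac{l_j}{2^{a_{n_0}}}+\Z=0+\Z$, i.e.\ $2^{a_{n_0}}\mid l_j$, and in particular $2^N\mid l_j$. Setting $j_N:=j_m$ gives the $\tau_2$-convergence criterion established in the section on the $2$-adic topology.

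I do not expect a real obstacle here: the argument is formally identical to the one given for $S=\{2^{-n^2}+\Z\}$, and the only properties of $(a_n)$ that intervene are $a_n\in\N$ and $a_n\to\infty$, both of which are part of the standing restrictions on $(a_n)$. The only point that deserves a bit of care is making sure that $m$ is chosen large enough that the multiples $k\cdot\frac{l_j}{2^{a_{n_0}}}+\Z$ for $k=1,\dots,m$ really exhaust the cyclic subgroup generated by $\frac{l_j}{2^{a_{n_0}}}+\Z$; taking $m=2^{a_{n_0}}$ guarantees this because the order of that element divides $2^{a_{n_0}}$. As an immediate consequence one also obtains continuity of $\id:(\Z,\tau_S)\to(\Z,\tau_2)$, and hence $\tau_2\leq\tau_S$ in this general setting.
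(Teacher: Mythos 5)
Your proof is correct and follows essentially the same route as the paper's: fix the target power of $2$, set $m=2^{a_{n_0}}$, use the defining condition of $\T_m$ at $n=n_0$, and observe that the first $m$ multiples of $\frac{l_j}{2^{a_{n_0}}}+\Z$ form a subgroup of $\T$ contained in $\T_+$, hence trivial. Your explicit justification that these multiples exhaust the cyclic subgroup (because the order of the element divides $m$) is a small point of added care over the paper's version, but the argument is the same.
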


\noindent For this result it is only necessary that $(a_n)_{n\in\N}$ is a sequence of natural numbers. As we want $(a_n)$ to be an increasing sequence, we can also include this hypothesis.

\begin{proof}
Let $n_0\in\N$. We fix $m=2^{a_{n_0}}$. By hypothesis there exists $j_m$ such that $\frac{l_j}{2^{a_n}}+\Z\in \T_m\ \forall n$ for all $j\geq j_m$. This means, $\frac{l_j}{2^{a_n}}+\Z, \frac{2l_j}{2^{a_n}}+\Z,\dots, \frac{ml_j}{2^{a_n}}+\Z\in\T_+\ \forall n$ $\Longleftrightarrow$ $\frac{l_j}{2^{a_n}}+\Z, \frac{2l_j}{2^{a_n}}+\Z,\dots, \frac{2^{a_{n_0}}l_j}{2^{a_n}}+\Z\in\T_+\ \forall n$. In particular for $n=n_0$, we have $\frac{l_j}{2^{a_{n_0}}}+\Z, \frac{2l_j}{2^{a_{n_0}}}+\Z,\dots, \frac{2^{a_{n_0}}l_j}{2^{a_{n_0}}}+\Z\in\T_+$. But $\frac{l_j}{2^{a_{n_0}}}+\Z, \frac{2l_j}{2^{a_{n_0}}}+\Z,\dots, \frac{2^{a_{n_0}}l_j}{2^{a_{n_0}}}+\Z=\langle \frac{l_j}{2^{a_{n_0}}}+\Z \rangle\subset\T_+$. Thus, $\frac{l_j}{2^{a_{n_0}}}+\Z=0+\Z$. This means, $\frac{l_j}{2^{a_{n_0}}}\in\Z$; or, $2^{a_{n_0}}\mid l_j$. Hence, $l_j\rightarrow 0$ in $\tau_2$.

\end{proof}

\begin{remark}\label{3.5.2}
The previous proposition implies that 

\centerline{$\id:(\Z,\tau_S)\rightarrow(\Z,\tau_2)$}

\noindent is continuous. Since it is also surjective, the mapping:

\centerline{$\id^{\wedge}:(\Z,\tau_2)^\wedge\rightarrow(\Z,\tau_S)^\wedge$} 

\noindent is injective, and $(\Z,\tau_2)^\wedge\subseteq(\Z,\tau_S)^\wedge$

\end{remark}

\noindent In the same line of the previous results for  $S=\{2^{-n^2}\mid n\in\N\}$, we define $j_n$ minimal, such that $2^{a_n}\mid l_j$ for all $j\geq j_n$. Define $M_n$ as in the previous section and $S_n=\max\{\frac{\mid l_j\mid}{2^{a_{n+1}}}|j\in M_n\}$

\begin{proposition}
$S_n\rightarrow 0$ in $\R$ $\Rightarrow$ $l_j\rightarrow 0$ in $\tau_S$.
\end{proposition}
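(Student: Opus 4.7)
The plan is to mimic the argument used in the analogous proposition for the special case $a_n=n^2$, since that proof never really used the quadratic form of the exponents, only that $(a_n)$ is strictly increasing and that $2^{a_n}\mid l_j$ for $j\ge j_n$. So I would begin by fixing $m\in\N$ and, using the hypothesis $S_n\to0$ in $\R$, choosing $n_0\in\N$ so that $S_n<\tfrac{1}{4m}$ for every $n\ge n_0$. Then I would set $j_m:=j_{n_0}$ and show that $l_j\in V_{S,m}$ for every $j\ge j_m$.

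For such a $j$, the partition by the $M_n$'s gives a unique $n\ge n_0$ with $j\in M_n$; in particular $j\ge j_n$, so $2^{a_n}\mid l_j$ by the minimality of $j_n$. The task is then to check, for every $k\in\N$, that $\frac{l_j}{2^{a_k}}+\Z\in\T_m$, which I would handle with a case split on whether $k\le n$ or $k\ge n+1$. If $k\le n$, the monotonicity of $(a_n)$ gives $a_k\le a_n$, hence $2^{a_k}\mid 2^{a_n}\mid l_j$ and $\frac{l_j}{2^{a_k}}+\Z=0+\Z\in\T_m$ trivially. If $k\ge n+1$, then $a_k\ge a_{n+1}$, so
\[
\frac{|l_j|}{2^{a_k}}\le\frac{|l_j|}{2^{a_{n+1}}}\le S_n<\frac{1}{4m},
\]
which puts $\frac{l_j}{2^{a_k}}+\Z$ in $\T_m=[-\tfrac{1}{4m},\tfrac{1}{4m}]+\Z$.

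Combining the two cases, $l_j\in V_{S,m}$ for all $j\ge j_m$, which is exactly the criterion for $l_j\to0$ in $\tau_S$ established just before the statement. I do not expect any real obstacle here: the only delicate point is making sure the two cases really cover every $k$ uniformly in $j$, and this is guaranteed by choosing $j_m=j_{n_0}$ (so the relevant $n\ge n_0$ is available) and by the monotonicity of $(a_n)$ (so $k\le n$ transfers divisibility and $k\ge n+1$ transfers the upper bound on $S_n$). Everything else is routine once these case splits are set up correctly.
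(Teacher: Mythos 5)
Your proof is correct and follows essentially the same route as the paper's: fix $m$, pick $n_0$ with $S_n<\frac{1}{4m}$ for $n\geq n_0$, locate $j$ in some $M_n$ with $n\geq n_0$, and split on $k\leq n$ (divisibility $2^{a_k}\mid 2^{a_n}\mid l_j$) versus $k\geq n+1$ (the bound $\frac{|l_j|}{2^{a_k}}\leq\frac{|l_j|}{2^{a_{n+1}}}\leq S_n<\frac{1}{4m}$). The only cosmetic difference is that you make the choice $j_m=j_{n_0}$ explicit, which the paper leaves implicit.
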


\begin{proof}
Let $m\in \N$. Since $S_n\rightarrow 0$ in $\R$, $\exists n_0$ such that $S_n<\frac{1}{4m}$ where $n\geq n_0$. Choose $j\in M_n$ where $n>n_0$.

\noindent We want to see that $l_j\in V_{S,m}$. For that, $\frac{l_j}{2^{a_k}}+\Z\in\T_m,\ \forall k$.

\noindent If $k\leq n$, then $a_k\leq a_n$; and hence, $2^{a_k}\mid 2^{a_n}\mid l_j$. That is, $\frac{l_j}{2^{a_k}}+\Z=0+\Z\in\T_m$.

\noindent If $k\geq n+1$, we have $0\leq\frac{\mid l_j\mid}{2^{a_k}}\leq\frac{\mid l_j\mid}{2^{a_{n+1}}}\leq S_n<\frac{1}{4m}$. Hence, $\frac{l_j}{2^{a_k}}\in\T_m$.

\end{proof}

\noindent We will see that $l_j\rightarrow 0$ in $\tau_S$ $\nRightarrow$ $S_n\rightarrow 0$ in $\R$. Choose $l_j=2^{a_j}b_j$ where $b_j$ is an odd number. Thus, $j_n=\{n\},\ M_n=\{n\}$ and $S_n=\frac{l_n}{2^{a_{n+1}}}$.

\begin{example} $S_n\rightarrow 1$

\noindent Now we will suppose that $(a_{n+1}-a_n)_{n\in\N}$ is a strictly increasing sequence. If we delete this hypothesis, the example is not valid as counter example of the implication we are studying, remainig as an open question.

\noindent $S_n=\frac{2^{a_n}b_n}{2^{a_{n+1}}}\approx 1$. Then, $b_n\approx 2^{a_{n+1}-a_n}$. Now, since $b_n$ must be odd, we choose $b_n=2^{a_{n+1}-a_n}-1$. Thus, $l_j=2^{a_{j+1}}-2^{a_j}$.

\noindent We will see now that $l_j\rightarrow 0$ in $\tau_S$. For that, we fix $m$, and choose $j_0=2m$. Let $j\geq j_0$

\noindent We must see that $\frac{l_j}{2^{a_{n}}}+\Z\in\T_m$. That means, $\frac{2^{a_{j+1}}-2^{a_j}}{2^{a_n}}+\Z\in\T_m \ \forall n$.

\noindent If $n\leq j$, then $2^{a_n}\mid 2^{a_{j+1}}$ and $2^{a_n}\mid 2^{a_{j}}$, hence $\frac{2^{a_{j+1}}-2^{a_j}}{2^{a_n}}+\Z=0+\Z\in\T_m$.

\noindent We fix now $n=j+k$. We must see $\frac{2^{a_{j+1}}}{2^{a_{j+k}}}-\frac{2^{a_{j}}}{2^{a_{j+k}}}+\Z\in\T_m$.

\noindent If k=1 $\frac{2^{a_{j+1}}}{2^{a_{j+k}}}-\frac{2^{a_{j}}}{2^{a_{j+k}}}+\Z=-\frac{2^{a_j}}{2^{a_{j+1}}}+\Z$. By the choice of $j_0$ we have $-\frac{2^{a_j}}{2^{a_{j+1}}}+\Z\in\T_m$.

\noindent If $k>1$, we must see $2^{a_{j+1}-a_{j+k}}-2^{a_j-a_{j+k}}+\Z\in\T_m$. By the choice of $j_0$ we have that $2^{a_{j+1}-a_{j+k}}\in\T_{2m}$ and $2^{a_j-a_{j+k}}\in\T_{2m}$. Thus, $2^{a_{j+1}-a_{j+k}}-2^{a_j-a_{j+k}}+\Z\in\T_m$.

\end{example}

\begin{example}$S_n$ unbounded

\noindent As in the previous example we will need $(a_{n+1}-a_n)_{n\in\N}$ be a strictly increasing sequence.

\noindent Fix $S_n=\frac{2^{a_n}b_n}{2^{a_{n+1}}}\approx n$. Choose $b_n=n2^{a_{n+1}-a_{n}}-1$. Thus, $l_j=j2^{a_{j+1}}-2^{a_j}$.

\noindent We want to see that $l_j\rightarrow 0$ in $\tau_S$; which means, $\frac{l_j}{2^{a_n}}+\Z\in\T_m\ \forall n$.

\noindent We fix $m\in \N$. Let $m_1$ such that $m_12^{a_{m_1+1}-a_{m_1}}\leq \frac{1}{8m} $. Let $j_0=\max\{m_1,2m\}$. Let $j\geq j_0$.

\noindent If, $n\leq j$, as before, the result follows trivially.

\noindent We choose, then, $\frac{j2^{a_{j+1}}-2^{a_j}}{2^{a_{j+k}}}+\Z=j2^{a_{j+1}-a_{j+k}}-2^{a_j-a_{j+k}}+\Z$.

\noindent If $k=1$, the expression turns into $j-2^{a_{j+1}-a_j}+\Z\in\T_m$, by the choice of $j_0$.

\noindent If $k>1$:

\noindent By the choice of $j_0$ we have $j2^{a_{j+1}-a_{j+k}}\leq\frac{1}{8m}$. That is, $j2^{a_{j+1}-a_{j+k}}+\Z\in\T_{2m}$.

\noindent Also, $-2^{a_j-a_{j+k}}+\Z\in\T_{2m}$.

\noindent Hence, $j2^{a_{j+1}-a_{j+k}}-2^{a_j-a_{j+k}}+\Z\in\T_m$.

\end{example}

\chapter{An approach to the dual of $(\Z,\tau_S)$ for $S=\{2^{-a_n}+\Z\mid n\in\N\}$}
\section{Writing of a natural number as series}

\noindent We produce a device in order to express a natural number as a sum pivoted by a particular sequence of natural numbers.

\begin{proposition}\label{bn}
Let $(b_n)_{n\in\N_0}$ a sequence of natural numbers such that $b_0=1$, $b_n\neq b_{n+1}$ and $b_n\mid b_{n+1},$ for all $n \in \N$. Then, for each natural number $l\in\N$, there exists a natural number $N(l)$, and integers
$k_0,\dots ,k_{N(l)}$, such that $l=\sum_{i=0}^{N(l)}k_ib_i$ and such that
$\mid k_n\mid\leq \frac{b_{n+1}}{2b_n}$, for $0\leq n\leq N(l)$. Also, $\mid\sum_{i=0}^{n}k_ib_i\mid\leq\frac{b_{n+1}}{2}$ for all $n$.
\end{proposition}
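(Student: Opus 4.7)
The plan is to construct the decomposition by strong induction on $N$, using a balanced-base expansion relative to $(b_n)$.

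First, I would observe that $b_n\mid b_{n+1}$ together with $b_n\neq b_{n+1}$ implies that $q_n:=b_{n+1}/b_n$ is a positive integer with $q_n\geq 2$, hence $(b_n)$ grows at least geometrically to infinity; so for each $l\in\N$ there is a (smallest) index $N=N(l)\geq 0$ with $|l|\leq b_{N+1}/2$. This is the $N(l)$ of the statement, and, because $S_N=l$, the hypothesis $|l|\leq b_{N+1}/2$ will automatically produce the partial-sum bound at the top index.

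I would then prove, by induction on $N\geq 0$, the slightly stronger claim: for every $l\in\Z$ (not just $l\in\N$, because negative remainders appear in the recursion) with $|l|\leq b_{N+1}/2$, there exist integers $k_0,\ldots ,k_N$ satisfying $l=\sum_{i=0}^N k_i b_i$, $|k_n|\leq b_{n+1}/(2b_n)$ for every $0\leq n\leq N$, and $\left|\sum_{i=0}^n k_i b_i\right|\leq b_{n+1}/2$ for every $0\leq n\leq N$. The base case $N=0$ is trivial: set $k_0=l$, and both bounds reduce to $|l|\leq b_1/2=b_1/(2b_0)$.

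For the inductive step, let $|l|\leq b_{N+1}/2$ and choose $k_N\in\Z$ to be an integer nearest to $l/b_N$, with ties broken toward $0$. Then the remainder $r:=l-k_Nb_N$ satisfies $|r|\leq b_N/2$, and from $|l|/b_N\leq q_N/2$ together with the tie-break rule one checks $|k_N|\leq q_N/2=b_{N+1}/(2b_N)$. Apply the inductive hypothesis to $r$ (since $|r|\leq b_N/2=b_{(N-1)+1}/2$) to get integers $k_0,\ldots ,k_{N-1}$ with $r=\sum_{i=0}^{N-1}k_ib_i$ and the bounds on $|k_n|$ and partial sums for $n\leq N-1$. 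For $n<N$ the partial sums of $l$ coincide with those of $r$, so the inequality follows from the inductive hypothesis; for $n=N$ the partial sum is $l$ itself, and $|l|\leq b_{N+1}/2$ by hypothesis.

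The main technical subtlety is the tie-break convention, which matters precisely when $q_N$ is odd and $|l|=b_{N+1}/2$ (forcing $b_N$ to be even so that $|l|$ is an integer). In that case $l/b_N=\pm q_N/2$ is equidistant from the two integers $(q_N\pm 1)/2$ (with appropriate signs), and breaking the tie toward $0$ yields $|k_N|=(q_N-1)/2<q_N/2$, still forcing $|r|=b_N/2$ as required. Everything else in the argument is a routine greedy computation, so this is the one point where care is required.
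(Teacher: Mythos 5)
Your proposal is correct and follows essentially the same route as the paper: both extract the top coefficient as the nearest integer to $l/b_N$ (ties broken toward $0$), bound the remainder by $b_N/2$, and recurse, with the same even/odd case analysis of $b_{n+1}/b_n$ justifying $\mid k_N\mid\leq b_{N+1}/(2b_N)$. The only difference is organizational — you package the top-down recursion as a strong induction on $N$ over all $l\in\Z$, whereas the paper writes the recursion for the coefficients explicitly and then verifies the two bounds afterwards.
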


\begin{proof}
\noindent Consider $rd(r)$ as the closest integer to r. In case $r=k+0.5$, where $k\in\Z$ 
choose $rd(r)$ as the closest integer to $0$ between $k$ and $k+1$. It is obvious that $|rd(x)|=rd(|x|)$

\noindent Fix $l\in\Z$.

\noindent Let $N$ be the minimum natural number such that $b_N\geq \mid l\mid$. 

\noindent Put $k_n=0$ for $n>N$. Let us define recursively the coefficients $k_N,\ldots, k_1,k_0$ 
in the following way: define $k_N:=rd(\frac{l}{b_N})$. Once defined $k_N,\dots,k_{n+1}$, define 
$k_n:=rd(\frac{l-\sum_{i=n+1}^{N}k_ib_i}{b_n})$. As a consequence, we obtain 
$k_{0} =rd(\frac{l-\sum_{i=1 }^{N}k_ib_i}{b_0})=rd(  l-\sum_{i=1 }^{N}k_ib_i)=l-\sum_{i=1 }^{N}k_ib_i$ 
and hence $l=\sum_{i=0}^{N}k_ib_i$.

\noindent Now let us see that 
$\mid\sum_{i=0}^{n}k_ib_i\mid \leq \frac{b_{n+1}}{2}$ for any $l$,  $0\le n\le N$.

\noindent $\mid\sum_{i=0}^{n}k_ib_i\mid=\mid(l-\sum_{i=n+2}^{N}k_ib_i)-k_{n+1}b_{n+1}\mid
= \mid b_{n+1}(\frac{l-\sum_{i=n+2}^{N}k_ib_i}{b_{n+1}})-b_{n+1}rd(\frac{l-\sum_{i=n+2}^{N}k_ib_i}{b_{n+1}})
\mid= b_{n+1}\mid\frac{l-\sum_{i=n+2}^{\infty}k_ib_i}{b_{n+1}}-rd(\frac{l-\sum_{i=n+2}^{N}
k_ib_i}{b_{n+1}})\mid\leq \frac{b_{n+1}}{2}$.

\noindent Finally we want to show that  $\mid k_n\mid\leq \frac{b_{n+1}}{2b_n}$.\label{acotkn}

\noindent $\mid k_n\mid=
\mid rd(\frac{l-\sum_{i=n+1}^N k_ib_i}{b_n})\mid=
rd(\frac{\mid l-\sum_{i=n+1}^N k_ib_i\mid}{b_n})=
rd(\frac{\mid \sum_{i=0}^n k_ib_i\mid}{b_n})$.

\noindent Since the round function is non-decreasing and 
$\mid \sum_{i=0}^n k_ib_i\mid\leq \frac{b_{n+1}}{2}$, 
$rd(\frac{\mid \sum_{i=0}^n k_ib_i\mid}{b_n})\leq rd(\frac{b_{n+1}}{2b_n})$.

\noindent If $\frac{b_{n+1}}{b_n}$ is even, then $rd(\frac{b_{n+1}}{2b_n})=\frac{b_{n+1}}{2b_n}$. 
If $\frac{b_{n+1}}{b_n}$ is odd, then $rd(\frac{b_{n+1}}{2b_n})=\frac{b_{n+1}}{2b_n}-0.5\leq \frac{b_{n+1}}{2b_n}$.

\noindent In both cases $\mid k_n\mid\leq\frac{b_{n+1}}{2b_n}$

\end{proof}

\section{Characterization of convergent sequences in $\tau_S$. Particular case $S=\{2^{-n^2}+\Z\mid n\in \N\}$}

\noindent Let $(l_j)_{j\in\N}$ be a sequence in $\Z$ converging to $0$ in $\tau_S$. By the proposition in the previous section, we can write $l_j=\sum_{q}k_{j,q}2^{a_q}$, where $(a_n)_{n\in\N}$ a strictly increasing sequence (at some point of the chapter, we may put more conditions on $(a_n)$).

\noindent Our objective is to find necessary and/or sufficient conditions on the coefficients $k_{j,q}$ in order that the sequence $(l_j)$ converges to $0$.

\noindent We begin with our special case $S=\{2^{-n^2}+\Z\mid n\in\N\}$. We now put $l_j=\sum_q k_{j,q}2^{q^2}$.

\noindent In the previous chapter, we saw that $l_j\rightarrow 0 \Longleftrightarrow \forall m\in\N,\ \exists j_m$ such that $\forall n\in\N,\ \frac{l_j}{2^{n^2}}+\Z\in\T_m$ for all $j\geq j_m$. 

\noindent Since $l_j=\sum_q k_{j,q}2^{q^2}$, we can write $\frac{l_j}{2^{n^2}}=\sum_q k_{j,q}\frac{2^{q^2}}{2^{n^2}}$.

\noindent We observe that $\sum_q k_{j,q}\frac{2^{q^2}}{2^{n^2}}+\Z=\sum_{q=0}^{n-1} k_{j,q}\frac{2^{q^2}}{2^{n^2}}+\Z,\ \forall n\in\N$.

\noindent What do these conditions mean? 

\noindent For $n=1$, $k_{j,0}\frac{1}{2}+\Z\in\T_m$.

\noindent For $n=2$, $k_{j,0}\frac{1}{2^4}+k_{j,1}\frac{2}{2^4}+\Z\in\T_m$

\noindent For $n=3$, $k_{j,0}\frac{1}{2^9}+k_{j,1}\frac{2}{2^9}+k_{j,2}\frac{2^4}{2^9}+\Z\in\T_m$

\noindent For $n=4$, $k_{j,0}\frac{1}{2^{16}}+k_{j,1}\frac{2}{2^{16}}+k_{j,2}\frac{2^4}{2^{16}}+ k_{j,3}\frac{2^9}{2^{16}}+\Z\in\T_m$

\noindent We study the case $n=1$, $k_{j,0}\in\{0,1\}$, then $\frac{k_{j,0}}{2}+\Z\in\{0+\Z,\frac{1}{2}+\Z\}$. Since $\frac{k_{j,0}}{2}+\Z\in\T_m$, we get that $k_{j,0}=0$ for all $j>j_1$.

\noindent For $n=2$, we choose $j>j_1$, then the condition turns into $\frac{k_{j,1}}{8}+\Z\in\T_m$. For $m\geq 4$, and $j\geq j_4$,  $k_{j,1}=0$.

\noindent Repeating the same arguments for each $q$, we choose $m=(q+1)^2$; for all $j\geq j_{(q+1)^2}$, we will get that $k_{j,q}=0$.




\section{Characterization of $V_{S,m}$. Particular case $S=\{2^{-n^2}+\Z\mid n\in\N\}$}

\noindent In this section we try to find some conditions on an integer $k$ to be in $V_{S,m}$.

\noindent In this case, we will have that $k=\sum_{i=0}^{N(k)}2^{i^{2}}k_i $ and $\mid k_i\mid \leq \frac{2^{(i+1)^2}}{2\cdot 2^{i^2}}=2^{2i}$.

\noindent For our purposes we will use this technical lemma:

\begin{lemma}\label{dostercios}

$\mid \frac{k_1}{2^{(N+1)^2-1}}+\frac{k_2}{2^{(N+1)^2-4}}+\cdots + \frac{k_N}{2^{(N+1)^2-N^2}}\mid\leq \frac{2}{3}$, for every $N$.
\end{lemma}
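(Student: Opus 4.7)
The plan is to reduce the claim to a simple numerical estimate by invoking the bound $|k_i|\le 2^{2i}$ established just before the lemma (as a consequence of Proposition \ref{bn} applied with $b_n = 2^{n^2}$). By the triangle inequality, what has to be bounded is
\[
\sum_{i=1}^{N} T_i, \qquad\text{where}\qquad T_i:=\frac{2^{2i}}{2^{(N+1)^2-i^2}}=2^{(i+1)^2-(N+1)^2-1}.
\]
Note that $T_N = 1/2$ already consumes most of the allowed budget $2/3$, so I only need the remaining $N-1$ terms to contribute at most $1/6$.

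The key step is to show that $(T_i)$ decreases very rapidly as $i$ moves away from $N$. A direct computation gives
\[
\frac{T_{i-1}}{T_{i}}=2^{-(2i+1)},
\]
which is $\le 1/8$ for every $i\ge 1$, and in particular $\le 1/2$. Using this as a geometric-ratio estimate, and noting that $T_{N-1}=2^{-2N-2}$, I obtain
\[
\sum_{i=1}^{N-1} T_i \;\le\; T_{N-1}\sum_{j=0}^{\infty} 2^{-j} \;=\; 2\,T_{N-1} \;=\; 2^{-2N-1}\;\le\;\frac{1}{8}.
\]

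Therefore $\sum_{i=1}^{N} T_i \le \frac{1}{2} + \frac{1}{8} = \frac{5}{8} \le \frac{2}{3}$, proving the lemma. The only nontrivial part is the algebraic bookkeeping with the quadratic exponents; here the identity $(N+1)^2-(i+1)^2=(N-i)(N+i+2)$ is useful, since it makes the super-geometric decrease of the $T_i$ transparent. I do not foresee any serious obstacle beyond this bookkeeping, and in fact the bound obtained ($5/8$) is noticeably sharper than the stated $2/3$, which gives some comfortable slack.
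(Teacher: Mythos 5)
Your proof is correct and follows essentially the same route as the paper: both arguments apply the triangle inequality, invoke the bound $\mid k_i\mid\leq 2^{2i}$ from Proposition \ref{bn}, and dominate the resulting terms by a geometric series (the paper sums a ratio-$\frac{1}{4}$ series from the top term to get exactly $\frac{2}{3}$, while you peel off the leading term $\frac{1}{2}$ and bound the tail by $\frac{1}{8}$, obtaining the slightly sharper $\frac{5}{8}$).
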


\begin{proof}
$\mid \frac{k_1}{2^{(N+1)^2-1}}+\frac{k_2}{2^{(N+1)^2-4}}+\cdots + \frac{k_N}{2^{(N+1)^2-N^2}}\mid\leq \frac{\mid k_1 \mid}{2^{(N+1)^2-1}}+ \frac{\mid k_2 \mid}{2^{(N+1)^2-4}}+\cdots +\frac{k_N}{2^{(N+1)^2-N^2}}=
\frac{\mid k_1 \mid+\mid k_2\mid 2^3+\cdots \mid k_N \mid 2^{N^2-1}}{2^{(N+1)^2-1}}\leq \frac{2^2+2^32^4+\cdots +2^{(N+1)^2-2}}{2^{(N+1)^2-1}}\leq \frac{\sum_{j=1}^{\infty}2^{(N+1)^2-2j}}{2^{(N+1)^2-1}}=\frac{\frac{2^{(N+1)^2-2}}{\frac{3}{4}}}{2^{(N+1)^2-1}}=\frac{2}{3}$

\end{proof}

\begin{remark}
\noindent This lemma has an important consequence: whenever we have $ \frac{k_1}{2^{(N+1)^2}-1}+\frac{k_2}{2^{(N+1)^2}-4}+\cdots + \frac{k_N}{2^{(N+1)^2-N^2}}+\Z\in\T_m$ then equivalently $ \frac{k_1}{2^{(N+1)^2}-1}+\frac{k_2}{2^{(N+1)^2}-4}+\cdots + \frac{k_N}{2^{(N+1)^2-N^2}}\in[-\frac{1}{4m},\frac{1}{4m}]$.
\end{remark}

\noindent In the previous chapter we observed that $V_{S,m}=\{k\in\Z\mid \frac{k}{2^{n^2}}+\Z\in\T_m\ \forall n\in\N\}$.

\noindent Obviously, the necessary and sufficient conditions for an integer   $k=\sum_{j=0}^Nk_j2^{j^2}$ to be in $V_{S,m}$ are $\frac{\sum_{i=0}^{n-1}k_i2^{i^2}}{2^{n^2}}+\Z\in\T_m\, \forall n\in\N$.

\noindent As we can observe, the admissible values $k_l$ depend on $k_0,\dots,k_{l-1}$. Hence, it is too difficult to find necessary and sufficient conditions for each coefficient.

\noindent We put $k=k_0+k_12^{1^2}+k_22^{2^2}+\cdots$, where $-\frac{2^{(n+1)^2}}{2\cdot 2^{n^2}}\leq k_n\leq\frac{2^{(n+1)^2}}{2\cdot 2^{n^2}}$; or equivalently, $\mid k_n\mid\leq 2^{2n}$.

\noindent We start studying the case $n=1$: $\frac{k}{2^{1^2}}+\Z=\frac{k_0}{2}+\Z\in\T_m$ if and only if $k_0=0$.

\noindent We now study what happens when $n=2$: $\frac{k}{2^{2^2}}+\Z=k_0\frac{1}{2^{2^2}}+k_1\frac{2^{1^2}}{2^{2^2}}+\Z= \frac{k_1}{2^3}+\Z\in\T_m$. We know that $\mid k_1\mid\leq 2^{2}$. Hence, the condition we were searching is that $\frac{k_1}{2^3}+\Z\in\T_m$.

\noindent We move directly to $n=3$. We will obtain a condition for $k_2$.

\noindent $\frac{k}{2^{3^2}}+\Z=k_1\frac{2}{2^9}+k_2\frac{2^4}{2^9}+\Z\in\T_m$. We must find $k_2$ such that the previous holds. $\frac{k_1}{2^8}+\frac{k_2}{2^5}+\Z=\frac{1}{2^5}(\frac{k_1}{2^3}+k_2)+\Z$. At this point we remember that by \ref{dostercios} we have $\mid \frac{k_1}{2^8}+\frac{k_2}{2^5}\mid\leq\frac{2}{3}$; this implies that $\frac{k_1}{2^8}+\frac{k_2}{2^5}\in [-\frac{1}{4m},\frac{1}{4m}]$. Equivalently, $\frac{k_1}{2^3}+k_2\in[-\frac{2^5}{4m},\frac{2^5}{4m}]$. Since $\frac{k_1}{8}\in[-\frac{1}{4m},\frac{1}{4m}]$, we obtain that if $\frac{k_2}{31}+\Z\in\T_m$ (which is equivalent to $k_2\in[-\frac{31}{4m},\frac{31}{4m}]$), then $\frac{k_1}{2^3}+k_2\in[-\frac{2^5}{4m},\frac{2^5}{4m}]$; or equivalently, $\frac{k_1}{2^8}+\frac{k_2}{2^5}+\Z\in\T_m$.

\noindent We consider now $n=4$.

\noindent $\frac{k}{2^{16}}+\Z=k_1\frac{2}{2^{16}}+k_2\frac{2^4}{2^{16}}+k_3\frac{2^9}{2^{16}}+\Z=
\frac{1}{2^7}(\frac{k_1}{2^8}+\frac{k_2}{2^5}+k_3)+\Z$. This number must be in $\T_m$. Once again by \ref{dostercios} $\frac{1}{2^7}(\frac{k_1}{2^8}+\frac{k_2}{2^5}+k_3)+\Z\in\T_m$ if and only if $\frac{1}{2^7}(\frac{k_1}{2^8}+\frac{k_2}{2^5}+k_3)\in[-\frac{1}{4m},\frac{1}{4m}]$; or, equivalently $(\frac{k_1}{2^8}+\frac{k_2}{2^5}+k_3)\in[-\frac{128}{4m},\frac{128}{4m}]$.
We choose $k_3\in[-\frac{127}{4m},\frac{127}{4m}]$. Since $\frac{k_1}{2^8}+\frac{k_2}{2^5}\in[-\frac{1}{4m},\frac{1}{4m}]$, $\frac{k}{2^{16}}+\Z\in\T_m$.

\noindent We can consider the following:

\begin{proposition}
Let $k$ be an integer and let $S=\{\frac{1}{2^{n^2}}+\Z\mid n\in\N\}.$ If $k_0=0$, $\frac{k_1}{8}+\Z\in\T_m$ and $\frac{k_n}{\frac{2^{(n+1)^2}}{2^{n^2}}-1}+\Z=\frac{k_n}{2^{2n+1}-1}+\Z\in\T_m$ for $2\leq n\leq N$, then $\frac{k}{2^{(N+1)^2}}+\Z\in\T_m$. In particular, if $N\geq N(k)$ as defined in \ref{bn}, then $k\in V_{S,m}$
\end{proposition}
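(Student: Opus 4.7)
The plan is to use Proposition \ref{bn} with $b_n=2^{n^2}$ to write $k=\sum_{i=0}^{N}k_i 2^{i^2}$, with the built-in bound $|k_i|\le 2^{2i}$, and then to prove by induction the strengthened statement that
\begin{equation*}
\left|\frac{k^{(n)}}{2^{(n+1)^2}}\right|\le \frac{1}{4m}\qquad\text{for all}\quad 0\le n\le N,
\end{equation*}
where $k^{(n)}:=\sum_{i=0}^{n}k_i 2^{i^2}$. Taking $n=N$ gives $|k/2^{(N+1)^2}|\le 1/(4m)<1/2$, so this real number is its own representative in $\T\approx[-1/2,1/2)$, and therefore $k/2^{(N+1)^2}+\Z\in\T_m$.

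First I would translate each hypothesis into a bound on $|k_n|$. Since $|k_n|\le 2^{2n}$ and the arithmetic inequality $2^{2n}/(2^{2n+1}-1)<3/4$ holds for $n\ge 1$, the real number $k_n/(2^{2n+1}-1)$ lies in $(-3/4,3/4)$; consequently, for any $m\ge 1$ the only integer within $1/(4m)\le 1/4$ of it is $0$, and the hypothesis $k_n/(2^{2n+1}-1)+\Z\in\T_m$ is equivalent to $|k_n|\le (2^{2n+1}-1)/(4m)$, for $2\le n\le N$. The analogous argument for $n=1$ (using $|k_1|\le 4$) yields $|k_1|\le 8/(4m)$.

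The induction then collapses to one line. The base case $n=0$ is trivial since $k^{(0)}=k_0=0$. For the step, the identity
\begin{equation*}
\frac{k^{(n)}}{2^{(n+1)^2}}=\frac{1}{2^{2n+1}}\left(\frac{k^{(n-1)}}{2^{n^2}}+k_n\right),
\end{equation*}
combined with the triangle inequality, the inductive hypothesis, and the bound on $|k_n|$ above, gives
\begin{equation*}
\left|\frac{k^{(n)}}{2^{(n+1)^2}}\right|\le \frac{1}{2^{2n+1}}\left(\frac{1}{4m}+\frac{2^{2n+1}-1}{4m}\right)=\frac{1}{4m};
\end{equation*}
the denominator $2^{2n+1}-1$ appearing in the hypothesis is exactly tuned so that, after absorbing the $1/(4m)$ slack inherited from the previous partial sum, the total $2^{2n+1}/(4m)$ cancels with the prefactor $1/2^{2n+1}$.

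For the ``in particular'' clause, when $N\ge N(k)$ the expansion of $k$ terminates at index $N$ (that is, $k_j=0$ for $j>N$), and one must check $k/2^{n^2}+\Z\in\T_m$ for every $n\in\N$. For $n\le N$ the terms with index $i\ge n$ contribute integer multiples of $2^{n^2}$, so $k/2^{n^2}+\Z=k^{(n-1)}/2^{n^2}+\Z$, which lies in $\T_m$ by the induction above; the case $n=N+1$ is the main claim; for $n>N+1$ a further division by $2^{n^2-(N+1)^2}\ge 2$ only strengthens the bound $1/(4m)$. The only genuinely subtle step in the argument is the initial hypothesis-unpacking, because $|k_n|/(2^{2n+1}-1)$ can slightly exceed $1/2$; this is precisely where the sharper estimate $|k_n|\le 2^{2n}$ from Proposition \ref{bn} (yielding $|k_n/(2^{2n+1}-1)|<3/4$) is indispensable to rule out the nearest integer to $k_n/(2^{2n+1}-1)$ being $\pm 1$ rather than $0$.
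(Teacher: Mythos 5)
Your proof is correct and is essentially the paper's proof: the same induction on the truncated sums, the same identity $\frac{k^{(n)}}{2^{(n+1)^2}}=\frac{1}{2^{2n+1}}\bigl(\frac{k^{(n-1)}}{2^{n^2}}+k_n\bigr)$, and the same triangle-inequality accounting in which the denominator $2^{2n+1}-1$ is tuned exactly to absorb the $\frac{1}{4m}$ inherited from the previous partial sum. The only differences are organizational and to your credit: by taking the inductive statement to be the real inequality $\bigl|k^{(n)}/2^{(n+1)^2}\bigr|\le\frac{1}{4m}$ you avoid invoking Lemma \ref{dostercios}, and you supply the justification (via $|k_n|\le 2^{2n}$, hence $|k_n/(2^{2n+1}-1)|<3/4$) for the step the paper passes over with ``or equivalently'' when converting the hypothesis $\frac{k_n}{2^{2n+1}-1}+\Z\in\T_m$ into the bound $|k_n|\le\frac{2^{2n+1}-1}{4m}$.
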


\begin{proof}
We will prove this result by induction on $N$.

\noindent We have already seen the proof if $N=2,3$.

\noindent We will now suppose the result true for $N$ and prove it for $N+1$.

\noindent We want $\frac{k}{2^{(N+1)^2}}+\Z= k_1\frac{2}{2^{(N+1)^2}}+ k_2\frac{2^4}{2^{(N+1)^2}}+\cdots +k_N\frac{2^{N^2}}{2^{(N+1)^2}}+\Z =\frac{k_1}{2^{(N+1)^2-1}}+\frac{k_2}{2^{(N+1)^2-4}}+\cdots + \frac{k_N}{2^{(N+1)^2-N^2}}+\Z= \frac{1}{2^{(N+1)^2-N^2}}(\frac{k_1}{2^{N^2-1}}+\frac{k_2}{2^{N^2-4}}+\cdots +k_N)+\Z\in\T_m$.

\noindent By \ref{dostercios}, $\frac{k}{2^{(N+1)^2}}+\Z\in\T_m$ if and only if $\frac{1}{2^{(N+1)^2-N^2}}(\frac{k_1}{2^{N^2-1}}+\frac{k_2}{2^{N^2-4}}+\cdots +k_N)+\Z\in\T_m$.

\noindent That is, we must see that $(\frac{k_1}{2^{N^2-1}}+\frac{k_2}{2^{N^2-4}}+\cdots + k_N)\in[-\frac{2^{(N+1)^2-N^2}}{4m},\frac{2^{(N+1)^2-N^2}}{4m}]$.

\noindent Since the proposition is true for $N$, we know that $\frac{k_1}{2^{N^2-1}}+\cdots +\frac{k_{N-1}}{2^{N^2-(N-1)^2}}\in[-\frac{1}{4m},\frac{1}{4m}]$.

\noindent If furthermore, $\frac{k_N}{2^{(N+1)^2-N^2}-1}+\Z\in\T_m$, or equivalently, $k_N\in[-\frac{2^{(N+1)^2-N^2}-1}{4m},\frac{2^{(N+1)^2-N^2}-1}{4m}]$, then $(\frac{k_1}{2^{N^2-1}})+\frac{k_2}{2^{N^2-4}}+\cdots + k_N\in[-\frac{2^{(N+1)^2-N^2}}{4m},\frac{2^{(N+1)^2-N^2}}{4m}]$.

\noindent This proves the first statement. The second is a direct consequence of the decomposition found at \ref{bn}.

\end{proof}

\begin{remark}

\noindent We have already seen that these conditions are sufficient for an integer $k$ to be in $V_{S,m}$. The next example shows that they are not necessary.

\noindent Take $m=1$, $k=128$. We will see that $k$ does not verify these conditions but $k\in V_{S,m}$.

$128=0\cdot 1+0\cdot 2+8\cdot 16$.

$k_0+\Z=0+\Z\in\T_+$.

$\frac{k_1}{8}+\Z=0+\Z\in\T_+$.

$\frac{k_2}{31}+\Z\approx 0'2580+\Z\notin \T_+$.

\noindent But

$\frac{128}{2}+\Z=0+\Z\in\T_+$.

$\frac{128}{16}+\Z=0+\Z\in\T_+$.

$\frac{128}{512}+\Z=\frac{1}{4}+\Z\in\T_+$.

\noindent Trivially $\frac{128}{2^{k^2}}+\Z\in\T_+$ if $k>3$.

\end{remark}

\section{Characterization of convergent sequences in $\tau_S$. General case}

\noindent Let $(l_j)_{j\in\N}$ be a sequence in $\Z$ converging to $0$ in $\tau_S$. By the proposition in the previous section, we can write $l_j=\sum_{q}k_{j,q}2^{a_q}$, where $(a_n)_{n\in\N}$ a strictly increasing sequence and $a_1=1$.

\noindent Our objective is to find necessary and/or sufficient conditions for the coefficients $k_{j,q}$, in order that $k$ belongs to $V_{S,m}$.

\noindent In the previous chapter, we saw that $l_j\rightarrow 0 \Longleftrightarrow \forall m\in\N,\ \exists j_0$ such that $\forall n\in\N,\ \frac{l_j}{2^{a_n}}+\Z\in\T_m$ for all $j\geq j_m$. 

\noindent Since $l_j=\sum_q k_{j,q}2^{a_q}$, we can write $\frac{l_j}{2^{a_n}}=\sum_q k_{j,q}\frac{2^{a_q}}{2^{a_n}}$.

\noindent We observe that $\sum_q k_{j,q}\frac{2^{a_q}}{2^{a_n}}+\Z=\sum_{q=0}^{n-1} k_{j,q}\frac{2^{a_q}}{2^{a_n}}+\Z,\ \forall n\in\N$.

\noindent What do these conditions mean? 

\noindent For $n=1$, $k_{j,0}\frac{1}{2}+\Z\in\T_m$.

\noindent For $n=2$, $k_{j,0}\frac{1}{2^{a_2}}+k_{j,1}\frac{2}{2^{a_2}}+\Z\in\T_m$

\noindent For $n=3$, $k_{j,0}\frac{1}{2^{a_3}}+k_{j,1}\frac{2}{2^{a_3}}+k_{j,2}\frac{2^{a_2}}{2^{a_3}}+\Z\in\T_m$

\noindent For $n=4$, $k_{j,0}\frac{1}{2^{a_4}}+k_{j,1}\frac{2}{2^{a_4}}+k_{j,2}\frac{2^{a_2}}{2^{a_4}}+ k_{j,3}\frac{2^{a_3}}{2^{a_4}}+\Z\in\T_m$

\noindent We study the first condition, $k_{j,0}\in\{0,1\}$, then $\frac{k_{j,0}}{2}+\Z\in\{0+\Z,\frac{1}{2}+\Z\}$. Since $\frac{k_{j,0}}{2}+\Z\in\T_m$, $k_{j,0}=0$ for all $j>j_1$.

\noindent We turn now to the second one, we choose $j>j_1$, then the condition turns into $\frac{k_{j,1}}{2^{a_2-a_1}}+\Z\in\T_m$. For $m\geq a_2$, and $j\geq j_{a_2}$  $k_{j,1}=0$.

\noindent Repeating the same arguments for each $q$, we choose $m=a_{q+1}$; for all $j\geq j_{a_{q+1}}$, we will get that $k_{j,q}=0$.




\section{Characterization of $V_{S,m}$}

\noindent In this case, we will have that $k=\sum_{i=0}^{\infty}2^{a_i}k_i $ and $\mid k_i\mid \leq \frac{2^{a_{i+1}}}{2\cdot 2^{a_i}}$.

\noindent We will try to reproduce \ref{dostercios}. This time we will need that $a_{n+1}-a_n\geq 2$ forall $n$.

\begin{lemma}\label{dt}

$\mid \frac{k_1}{2^{a_{N+1}-1}}+\frac{k_2}{2^{a_{N+1}-a_2}}+\cdots + \frac{k_N}{2^{a_{N+1}-a_N}}\mid\leq \frac{2}{3}$, for every $N$.
\end{lemma}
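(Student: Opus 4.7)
The plan is to mimic the proof of Lemma \ref{dostercios}, but using the assumption $a_{n+1}-a_n\geq 2$ to drive a geometric series estimate instead of relying on the specific exponents $n^2$.

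First I would apply the triangle inequality and then insert the available upper bound $|k_i|\leq \frac{2^{a_{i+1}}}{2\cdot 2^{a_i}}=2^{a_{i+1}-a_i-1}$ on each coefficient. This turns the $i$-th term into
$$\frac{|k_i|}{2^{a_{N+1}-a_i}}\;\leq\;\frac{2^{a_{i+1}-a_i-1}}{2^{a_{N+1}-a_i}}\;=\;\frac{1}{2^{a_{N+1}-a_{i+1}+1}},$$
so the whole sum is bounded by $\sum_{i=1}^{N}2^{-(a_{N+1}-a_{i+1}+1)}$.

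Next I would estimate the exponents from below using the hypothesis $a_{n+1}-a_n\geq 2$ (telescoping): for each $i\in\{1,\dots,N\}$,
$$a_{N+1}-a_{i+1}\;=\;\sum_{j=i+1}^{N}(a_{j+1}-a_j)\;\geq\;2(N-i),$$
so $a_{N+1}-a_{i+1}+1\geq 2(N-i)+1$. Substituting this back, the sum is dominated by
$$\sum_{i=1}^{N}\frac{1}{2^{\,2(N-i)+1}}\;=\;\sum_{r=0}^{N-1}\frac{1}{2^{\,2r+1}}\;\leq\;\sum_{r=0}^{\infty}\frac{1}{2^{\,2r+1}}\;=\;\frac{1/2}{1-1/4}\;=\;\frac{2}{3}.$$

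The argument is entirely mechanical once one sees that the coefficient bound collapses each fraction to $2^{-(a_{N+1}-a_{i+1}+1)}$ and that the gap hypothesis makes these exponents grow at least as fast as the odd numbers; the only place where care is needed is in combining the coefficient bound with the gap assumption to get the exponent $2(N-i)+1$, which is precisely what forces the hypothesis $a_{n+1}-a_n\geq 2$ (otherwise the geometric series need not converge to something $\leq 2/3$). No further obstacle is expected.
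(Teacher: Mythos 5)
Your proof is correct and follows essentially the same route as the paper's: both bound each term using $\mid k_i\mid\leq 2^{a_{i+1}-a_i-1}$ and then exploit the gap hypothesis $a_{n+1}-a_n\geq 2$ to dominate the result by the geometric series $\sum_{r\geq 0}2^{-(2r+1)}=\frac{2}{3}$. The only cosmetic difference is that the paper first puts everything over the common denominator $2^{a_{N+1}-1}$ before comparing with the geometric series, whereas you keep the terms as separate fractions; the estimates are identical.
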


\begin{proof}
$\mid \frac{k_1}{2^{a_{N+1}}-1}+\frac{k_2}{2^{a_{N+1}-a_2}}+\cdots + \frac{k_N}{2^{a_{N+1}-a_N}}\mid\leq \frac{\mid k_1 \mid}{2^{a_{N+1}-1}}+ \frac{\mid k_2 \mid}{2^{a_{N+1}-a_2}}+\cdots +\frac{k_N}{2^{a_{N+1}-a_N}}=
\frac{\mid k_1 \mid+\mid k_2\mid 2^{a_2-1}+\cdots \mid k_N \mid 2^{a_N-1}}{2^{a_{N+1}-1}}\leq \frac{\sum_{j=1}^{\infty}2^{a_{N+1}-2j}}{2^{a_{N+1}-1}}=
\frac{\frac{2^{a_{N+1}-2}}{\frac{3}{4}}}{2^{a_{N+1}-1}}=\frac{2}{3}$

\end{proof}

\begin{remark}
\noindent This lemma has an important consequence, whenever we have $ \frac{k_1}{2^{a_{N+1}-1}}+\frac{k_2}{2^{a_{N+1}-a_2}}+\cdots + \frac{k_N}{2^{a_{N+1}-a_N}}+\Z\in\T_m$ equivalently $ \frac{k_1}{2^{a_{N+1}-1}}+\frac{k_2}{2^{a_{N+1}-a_2}}+\cdots + \frac{k_N}{2^{a_{N+1}-a_N}}\in[-\frac{1}{4m},\frac{1}{4m}]$.
\end{remark}

\noindent In the previous chapter we observed that $V_{S,m}=\{k\in\Z\mid \frac{k}{2^{a_n}}+\Z\in\T_m\ \forall n\in\N\}$.

\noindent Obviously, the necessary and sufficient conditions for $k\in\Z$, where   $k=\sum_{j=0}^N\frac{k_j}{2^{a_j}}$ to be in $V_{S,m}$ would be $\frac{\sum_{i=0}^{n-1}k_i2^{a_i}}{2^{a_n}}+\Z\in\T_m\ \forall n\in\N$.

\noindent As we can observe, the admissible values $k_l$ depend on $k_0,\dots,k_{l-1}$. Hence, it is too difficult to find necessary and sufficient conditions for each coefficient.

\noindent We put $k=k_0+k_12^{a_1}+k_22^{a_2}+\cdots$, where $-\frac{2^{a_{n+1}}}{2\cdot 2^{a_n}}\leq k_n\leq\frac{2^{a_{n+1}}}{2\cdot 2^{a_n}}$; or equivalently, $\mid k_n\mid\leq 2^{a_{n+1}-a_n-1}$.

\noindent We start studying the case $n=1$: $\frac{k}{2^{a_1}}+\Z=\frac{k_0}{2}+\Z\in\T_m$ if and only if $k_0=0$.

\noindent We now study what happens when $n=2$: $\frac{k}{2^{a_2}}+\Z=k_0\frac{1}{2^{a_2}}+k_1\frac{2^{a_1}}{2^{a_2}}+\Z= \frac{k_1}{a_3}+\Z\in\T_m$. We know that $\mid k_1\mid\leq 2^{2}$. Hence, the condition we were searching is that $\frac{k_1}{a_3}+\Z\in\T_m$.

\noindent We move directly to $n=3$. We will obtain a condition for $k_2$.

\noindent $\frac{k}{2^{a_3}}+\Z=k_1\frac{2}{2^{a_3}}+k_2\frac{2^{a_2}}{2^{a_3}}+\Z\in\T_m$. We must find $k_2$ such that the previous holds. $\frac{k_1}{2^{a_3-1}}+\frac{k_2}{2^{a_3-a_2}}+\Z=\frac{1}{2^{a_3-a_2}}(\frac{k_1}{2^{a_2-a_1}}+k_2)+\Z$. At this point we remember that by \ref{dt} we have $\mid \frac{k_1}{2^{a_3-1}}+\frac{k_2}{2^{a_3-a_2}}\mid\leq\frac{2}{3}$; this implies that $\frac{k_1}{2^{a_3-1}}+\frac{k_2}{2^{a_3-a_2}}\in [-\frac{1}{4m},\frac{1}{4m}]$. Equivalently, $\frac{k_1}{2^{a_2-1}}+k_2\in[-\frac{2^{a_3-a_2}}{4m},\frac{2^{a_3-a_2}}{4m}]$. Since $\frac{k_1}{2^{a_2-1}}\in[-\frac{1}{4m},\frac{1}{4m}]$, we obtain that if $\frac{k_2}{2^{a_3-a_2}-1}+\Z\in\T_m$ (which is equivalent to $k_2\in[-\frac{2^{a_3-a_2}-1}{4m},\frac{2^{a_3-a_2}-1}{4m}]$), then $\frac{k_1}{2^{a_2-a_1}}+k_2\in[-\frac{2^{a_3-a_2}}{4m},\frac{2^{a_3-a_2}}{4m}]$; or equivalently, $\frac{k_1}{2^{a_2-a_1}}+\frac{k_2}{2^{a_3-a_2}}+\Z\in\T_m$.

\noindent We consider now $n=4$.

\noindent $\frac{k}{2^{a_4}}+\Z=k_1\frac{2}{2^{a_4}}+k_2\frac{2^{a_2}}{2^{a_4}}+k_3\frac{2^{a_3}}{2^{a_4}}+\Z=
\frac{1}{2^{a_4-a_3}}(\frac{k_1}{2^{a_3-1}}+\frac{k_2}{2^{a_3-a_2}}+k_3)+\Z$. This number must be in $\T_m$. Once again by \ref{dt} $\frac{1}{2^{a_4-a_3}}(\frac{k_1}{2^{a_3-1}}+\frac{k_2}{2^{a_3-a_2}}+k_3)+\Z\in\T_m$ if and only if $\frac{1}{2^{a_4-a_3}}(\frac{k_1}{2^{a_3-a_1}}+\frac{k_2}{2^{a_3-a_2}}+k_3)\in[-\frac{1}{4m},\frac{1}{4m}]$; or, equivalently $(\frac{k_1}{2^{a_3-1}}+\frac{k_2}{2^{a_3-a_2}}+k_3)\in[-\frac{2^{a_4-a_3}}{4m},\frac{2^{a_4-a_3}}{4m}]$.
We choose $k_3\in[-\frac{2^{a_4-a_3}-1}{4m},\frac{2^{a_4-a_3}-1}{4m}]$. Since $\frac{k_1}{2^{a_3-1}}+\frac{k_2}{2^{a_3-a_2}}\in[-\frac{1}{4m},\frac{1}{4m}]$, $\frac{k}{2^{a_4}}+\Z\in\T_m$.

\noindent We can consider the following:

\begin{proposition}
If $k_0=0$, $\frac{k_1}{2^{a_2-1}}+\Z\in\T_m$ and $\frac{k_n}{\frac{2^{a_{n+1}}}{2^{a_n}}-1}+\Z=\frac{k_n}{2^{a_{n+1}-a_n}-1}+\Z\in\T_m$ for $2\leq n\leq N$, then $\frac{k}{2^{a_{N+1}}}+\Z\in\T_m$. In particular, if $N\geq N(k)$ defined in \ref{bn}, then $k\in V_{S,m}$
\end{proposition}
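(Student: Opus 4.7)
The plan is to prove the statement by induction on $N$, exactly parallel to the proof already carried out in the previous section for $S=\{2^{-n^2}+\Z\mid n\in\N\}$. The preliminary computations shown in the excerpt for $n=1,2,3,4$ already verify the base cases of the induction; the entire point of Lemma~\ref{dt} is to guarantee that, at each stage, the partial tails have absolute value at most $2/3<1/2$, so that a condition of the form ``$x+\Z\in\T_m$'' is equivalent to the real inequality $x\in\bigl[-\tfrac{1}{4m},\tfrac{1}{4m}\bigr]$. This equivalence is what allows one to add the estimates for different coefficients in the \emph{real line}, rather than only modulo $\Z$.

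For the inductive step, assuming the conclusion holds for $N$, I would write
\[
\frac{k}{2^{a_{N+1}}}=\sum_{i=1}^{N}\frac{k_i}{2^{a_{N+1}-a_i}}
=\frac{1}{2^{a_{N+1}-a_N}}\left(\sum_{i=1}^{N-1}\frac{k_i}{2^{a_N-a_i}}+k_N\right),
\]
using $k_0=0$. By Lemma~\ref{dt} (applied at level $N$) the tail $\sum_{i=1}^{N-1}\frac{k_i}{2^{a_N-a_i}}$ has absolute value at most $2/3$, so the inductive hypothesis ``$\tfrac{k}{2^{a_N}}+\Z\in\T_m$'' is equivalent to $\sum_{i=1}^{N-1}\frac{k_i}{2^{a_N-a_i}}\in\bigl[-\tfrac{1}{4m},\tfrac{1}{4m}\bigr]$ as real numbers. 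The hypothesis on $k_N$, together with the a priori bound $|k_N|\leq 2^{a_{N+1}-a_N-1}$, translates in the same way to $k_N\in\bigl[-\tfrac{2^{a_{N+1}-a_N}-1}{4m},\tfrac{2^{a_{N+1}-a_N}-1}{4m}\bigr]$. Adding these two intervals gives the bracket in $\bigl[-\tfrac{2^{a_{N+1}-a_N}}{4m},\tfrac{2^{a_{N+1}-a_N}}{4m}\bigr]$, and dividing by $2^{a_{N+1}-a_N}$ yields $\frac{k}{2^{a_{N+1}}}\in\bigl[-\tfrac{1}{4m},\tfrac{1}{4m}\bigr]$, hence $\frac{k}{2^{a_{N+1}}}+\Z\in\T_m$, closing the induction.

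For the final ``in particular'' clause, suppose $N\geq N(k)$, so that $k_i=0$ for $i>N(k)$ and $k=\sum_{i=0}^{N(k)}k_i 2^{a_i}$. I must verify $\frac{k}{2^{a_n}}+\Z\in\T_m$ for every $n\in\N$. For $n\leq N+1$ this is exactly what the induction already gives. For $n\geq N+2$, write $\frac{k}{2^{a_n}}=\frac{1}{2^{a_n-a_{N+1}}}\cdot\frac{k}{2^{a_{N+1}}}$; since $\bigl|\frac{k}{2^{a_{N+1}}}\bigr|\leq \tfrac{1}{4m}$ (as just established in the real line) and $2^{a_n-a_{N+1}}\geq 1$, the quotient still lies in $\bigl[-\tfrac{1}{4m},\tfrac{1}{4m}\bigr]$, so its class is in $\T_m$. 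Thus $k\in V_{S,m}$.

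The only real obstacle is a bookkeeping issue: one has to keep track of \emph{two} levels of the same condition, namely the mod-$\Z$ version (which is what appears in the hypotheses and in the definition of $V_{S,m}$) and the real-line version (which is what one can actually add and rescale). The entire role of Lemma~\ref{dt} is to legalise the passage between the two, and once one assumes $a_{n+1}-a_n\geq 2$ (as noted there), the rest of the argument is a direct transcription of the calculation already done for the sequence $a_n=n^2$.
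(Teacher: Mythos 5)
Your proof is correct and follows essentially the same route as the paper's: induction on $N$ with the same factorisation $\frac{k}{2^{a_{N+1}}}=\frac{1}{2^{a_{N+1}-a_N}}\bigl(\sum_{i=1}^{N-1}\frac{k_i}{2^{a_N-a_i}}+k_N\bigr)$, Lemma~\ref{dt} to convert the mod-$\Z$ hypotheses into real-line interval estimates, and the same interval arithmetic to close the induction; you in fact give more detail than the paper on the final clause by treating $n\geq N+2$ explicitly via rescaling. One small slip: the inequality justifying the mod-$\Z$/real-line equivalence should be $2/3<1-\frac{1}{4m}$ (e.g.\ $2/3<3/4$), not ``$2/3<1/2$'', which is false as written, though the conclusion you draw from it is the right one.
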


\begin{proof}
We will prove this result by induction on $N$.

\noindent We have already seen the proof if $N=2,3$.

\noindent We will now suppose the result true for $N$ and prove it for $N+1$.

\noindent We want $\frac{k}{2^{a_{N+1}}}+\Z= k_1\frac{2}{2^{a_{N+1}}}+ k_2\frac{2^{a_2}}{2^{a_{N+1}}}+\cdots +k_N\frac{2^{a_N}}{2^{a_{N+1}}}+\Z =\frac{k_1}{2^{a_{N+1}-1}}+\frac{k_2}{2^{a_{N+1}-a_2}}+\cdots + \frac{k_N}{2^{a_{N+1}-a_N}}+\Z= \frac{1}{2^{a_{N+1}-a_N}}(\frac{k_1}{2^{a_N-1}}+\frac{k_2}{2^{a_N-a_2}}+\cdots +k_N)+\Z\in\T_m$.

\noindent By \ref{dt}, $\frac{k}{2^{a_{N+1}}}+\Z\in\T_m$ if and only if $\frac{1}{2^{a_{N+1}-a_N}}(\frac{k_1}{2^{a_N-1}}+\frac{k_2}{2^{a_N-a_2}}+\cdots +k_N)+\Z\in \T_m$.

\noindent That is, we must see that $(\frac{k_1}{2^{a_N-1}}+\frac{k_2}{2^{a_N^2-a_2}}+\cdots + k_N)\in[-\frac{2^{a_{N+1}-a_N}}{4m},\frac{2^{a_{N+1}-a_N}}{4m}]$.

\noindent Since the proposition is true for $N$, we know that $\frac{k_1}{2^{a_N-1}}+\cdots +\frac{k_{N-1}}{2^{a_N-a_{N-1}}}\in[-\frac{1}{4m},\frac{1}{4m}]$.

\noindent If furthermore, $\frac{k_N}{2^{a_{N+1}-a_N}-1}\in\T_m$, or equivalently, $k_N\in[-\frac{2^{a_{N+1}-a_N}-1}{4m},\frac{2^{a_{N+1}-a_N}-1}{4m}]$, then $(\frac{k_1}{2^{a_N-1}})+\frac{k_2}{2^{a_N-a_2}}+\cdots + k_N\in[-\frac{2^{a_{N+1}-a_N}}{4m},\frac{2^{a_{N+1}-a_N}}{4m}]$.

\noindent This proves the first statement. The second is a direct consequence of the decomposition found at \ref{bn}.

\end{proof}

\section{General results}

\noindent We start the section proving the following algebraic result.

\begin{proposition}\label{S=prufer}
Let $(a_n)_{n\in\N}$ be a strictly increasing sequence and let $S=\{2^{-a_n}+\Z\mid n\in\N\}$.

\noindent Then $\langle S\rangle =\Z(2^\infty)$.
 
\end{proposition}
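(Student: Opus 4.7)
The plan is to prove the equality by showing both inclusions, and the whole argument is essentially a routine verification once one observes that $a_n\to\infty$.

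For the inclusion $\langle S\rangle\subseteq\Z(2^\infty)$, I would simply note that every generator $2^{-a_n}+\Z$ is a $2$-power torsion element of $\T$, namely $2^{a_n}\cdot(2^{-a_n}+\Z)=0+\Z$, so each generator lies in the Prüfer group $\Z(2^\infty)=\{k/2^m+\Z\mid k\in\Z,\ m\in\N\}$. Since $\Z(2^\infty)$ is a subgroup of $\T$, it contains $\langle S\rangle$.

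For the reverse inclusion $\Z(2^\infty)\subseteq\langle S\rangle$, I would take an arbitrary element $\frac{k}{2^m}+\Z\in\Z(2^\infty)$. The key observation is that since $(a_n)_{n\in\N}$ is a strictly increasing sequence of natural numbers, $a_n\to\infty$, so there exists $n\in\N$ with $a_n\geq m$. Then
\[
\frac{k}{2^m}+\Z \;=\; k\cdot 2^{a_n-m}\cdot\frac{1}{2^{a_n}}+\Z \;=\; (k\cdot 2^{a_n-m})\cdot\bigl(2^{-a_n}+\Z\bigr),
\]
which is an integer multiple of the generator $2^{-a_n}+\Z\in S$, hence lies in $\langle S\rangle$. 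This proves the inclusion and completes the equality.

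There is no real obstacle here; the only hypothesis used beyond the definition of the Prüfer group is that $a_n\to\infty$, which is automatic for a strictly increasing sequence of natural numbers. The argument works verbatim whether one takes $\N$ to include $0$ or not, and it does not require any further structural assumption on $(a_n)$.
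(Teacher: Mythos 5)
Your proposal is correct and follows essentially the same route as the paper: both directions rest on observing that each generator $2^{-a_n}+\Z$ lies in $\Z(2^\infty)$, and that any $\frac{k}{2^m}+\Z$ can be rewritten as $\frac{k\cdot 2^{a_n-m}}{2^{a_n}}+\Z$ once $a_n\geq m$ (the paper uses the specific bound $a_n\geq n$, you use $a_n\to\infty$; these are interchangeable here). No issues.
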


\begin{proof}
\noindent Let $S_0=\{2^{-n}+\Z\mid n\in\N \}$. We know that $\Z(2^\infty)=\langle S_0 \rangle$.

\noindent Since $S\subset S_0$, $\langle S\rangle \subset \langle S_0 \rangle=\Z(2^\infty)$.

\noindent Conversely, let $x+\Z\in \Z(2^\infty)$. $x=\frac{k}{2^n}$ for some $k\in\Z$ and some $n\in\N$.

\noindent Since $(a_n)$ is strictly increasing $a_n\geq n$.

\noindent Hence, $x=\frac{k}{2^n}=\frac{k2^{a_n-n}}{2^{a_n}}$. Thus $x\in\langle S\rangle$.

\end{proof}

\noindent We now recover our interest in convergent sequences in $\tau_S$.

\begin{theorem}
Let $S=\{2^{-n^2}+\Z\mid n\in\N\}$. Then $\tau_2\neq \tau_S$.
\end{theorem}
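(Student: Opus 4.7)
The plan is to exhibit a $\tau_S$-neighborhood of $0$ that is not a $\tau_2$-neighborhood of $0$. Since the preceding discussion has already established $\tau_2 \leq \tau_S$, producing such a neighborhood will suffice to conclude that the two topologies differ.

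I will work with $V_{S,1} = \{k \in \Z \mid \frac{k}{2^{n^2}} + \Z \in \T_+ \text{ for all } n \in \N\}$, which is by construction a basic $\tau_S$-neighborhood of $0$. Assuming toward a contradiction that $\tau_S = \tau_2$, the set $V_{S,1}$ would have to contain some basic $\tau_2$-neighborhood $2^N \Z$, because $\{2^N \Z \mid N \in \N\}$ is a neighborhood basis at $0$ for $\tau_2$. My goal is thus to rule this inclusion out for every $N$ by exhibiting an explicit element of $2^N \Z$ that lies outside $V_{S,1}$.

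Given $N \in \N$, I pick $n$ with $n^2 > N$ and set $k := 2^{n^2 - 1}$. Since $n^2 - 1 \geq N$, I have $k \in 2^N \Z$; however, $\frac{k}{2^{n^2}} + \Z = \frac{1}{2} + \Z \notin [-\frac{1}{4},\frac{1}{4}] + \Z = \T_+$, so $k \notin V_{S,1}$. Hence $2^N \Z \not\subseteq V_{S,1}$ for every $N$, so $V_{S,1}$ fails to be a $\tau_2$-neighborhood of $0$, contradicting $\tau_S = \tau_2$.

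There is no real obstacle: the only substantive observation is that $k = 2^{n^2-1}$, when divided by $2^{n^2}$, lands at the ``worst possible'' point $\frac{1}{2}$ of $\T$, as far from $\T_+$ as one can get. Equivalently, this same phenomenon can be packaged in terms of convergent sequences: the sequence $(2^j)_{j \in \N}$ converges to $0$ in $\tau_2$, since $2^n \mid 2^j$ for $j \geq n$, but the subsequence indexed by $j = n^2 - 1$ yields $\frac{2^{n^2-1}}{2^{n^2}} + \Z = \frac{1}{2} + \Z$, preventing convergence to $0$ in $\tau_S$.
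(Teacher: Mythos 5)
Your proof is correct and rests on exactly the same key computation as the paper's: the element $2^{n^2-1}$ (equivalently, the sequence $l_j=2^j$ at the indices $j=n^2-1$) reduces to $\frac{1}{2}+\Z\notin\T_+$ upon division by $2^{n^2}$. The paper phrases this as ``$(2^j)$ converges to $0$ in $\tau_2$ but not in $\tau_S$'' rather than via the neighbourhood $V_{S,1}$, but as you note yourself in your closing remark, the two packagings are equivalent.
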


\begin{proof}
It suffices to prove that there exists a sequence $l_j\rightarrow 0$ in $\tau_2$ such that $l_j\nrightarrow 0$ in $\tau_S$.

\noindent We use the criteria found in the previous chapter.

\noindent Fix $l_j=2^j$.

\noindent Obviously, $l_j\rightarrow 0$ in $\tau_2$.

\noindent We observed that, $l_j\rightarrow 0$ in $\tau_S\Longleftrightarrow \forall m\in\N\, \exists j_m$ such that $\forall n$ $\frac{l_j}{2^{n^2}}+\Z\in\T_m$ for all $j\geq j_m$.

\noindent We now want to see that $l_j\nrightarrow 0$ in $\tau_S$

\noindent Fix $m$. It suffices to show that $A=\{j_n\in\N\mid \exists n$ such that $\frac{l_j}{2^{n^2}}+\Z\notin\T_m$ if $j\geq j_m\}$ is cofinal in $\N$.

\noindent Fix $n$. Find $j_n$ such that $\frac{l_{j_n}}{2^{n^2}}=\frac{1}{2}\Longleftrightarrow l_{j_n}=2^{n^2-1}\Longleftrightarrow j_n=n^2-1$.

\noindent Thus, $A=\{n^2-1\mid n>1\}$ and it is obviously cofinal.

\noindent Hence $l_j\nrightarrow 0$ in $\tau_S$ and $\tau_2\neq\tau_S$.

\end{proof}

\noindent Can we generalize this result? The answer is affirmative

\begin{theorem}\label{tau2noestauS}
Let $(a_n)$ be a strictly increasing sequence and let $S=\{\frac{1}{2^{a_n}}+\Z\mid n\in\N\}$. Then $\tau_2\neq \tau_S$.
\end{theorem}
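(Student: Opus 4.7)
The plan is to adapt the proof of the preceding special-case theorem (for $a_n = n^2$) essentially verbatim, replacing $n^2$ by $a_n$. The witness sequence will again be $l_j := 2^j$, and the strategy is to show $l_j \to 0$ in $\tau_2$ but $l_j \nrightarrow 0$ in $\tau_S$, from which $\tau_2 \neq \tau_S$ follows immediately since they do not share the same convergent sequences.

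First, I would verify that $l_j = 2^j \to 0$ in $\tau_2$, which is immediate from the convergence criterion in $\tau_2$: given $n_0 \in \N$, for every $j \geq n_0$ we have $2^{n_0} \mid 2^j = l_j$. So the nontrivial part is showing $l_j \nrightarrow 0$ in $\tau_S$. Recall the criterion established in Chapter 3: $l_j \to 0$ in $\tau_S$ iff for every $m \in \N$ there exists $j_m$ such that $\frac{l_j}{2^{a_n}} + \Z \in \T_m$ for all $n \in \N$ and all $j \geq j_m$. To violate this for $m = 1$ (so that $\T_m = \T_+$), I would for each $n$ set $j(n) := a_n - 1$ and compute
\[
\frac{l_{j(n)}}{2^{a_n}} = \frac{2^{a_n - 1}}{2^{a_n}} = \frac{1}{2},
\]
so that $\frac{l_{j(n)}}{2^{a_n}} + \Z = \frac{1}{2} + \Z \notin \T_+$.

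Since $(a_n)$ is a strictly increasing sequence of natural numbers, $a_n \to \infty$, and hence the set $A := \{a_n - 1 \mid n \in \N\}$ is cofinal in $\N$. Consequently, no choice of $j_1$ can work: given any candidate $j_1$, pick $n$ large enough that $a_n - 1 \geq j_1$, and then $j := a_n - 1 \geq j_1$ witnesses the failure of $\frac{l_j}{2^{a_n}} + \Z \in \T_+$. This shows $l_j \nrightarrow 0$ in $\tau_S$, so the two topologies differ.

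There is no substantive obstacle here: the argument is a direct transcription of the preceding special-case proof, and the only property of the exponents used is that the sequence $(a_n)$ tends to infinity, which is automatic from strict monotonicity in $\N$. The one mild point to double-check is the implicit assumption that $a_1 \geq 1$ (so that $j(1) = a_1 - 1 \geq 0$ makes sense as an index); if $a_1 = 0$ the corresponding character is trivial and can simply be ignored when building $A$, so cofinality is unaffected.
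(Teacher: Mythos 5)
Your proposal is correct and follows essentially the same route as the paper's own proof: the witness sequence $l_j=2^j$, the reduction to $m=1$, and the observation that $\{a_n-1\mid n\in\N\}$ is cofinal because $(a_n)$ is strictly increasing. Your write-up actually makes the negation of the convergence criterion slightly more explicit than the paper does, but the argument is the same.
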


\begin{proof}
As above it suffices to show the existence of a sequence $l_j\rightarrow 0$ in $\tau_2$ such that $l_j\nrightarrow 0$ in $\tau_S$.

\noindent Fix $l_j=2^j$.

\noindent As before, $l_j\rightarrow 0$ in $\tau_2$.

\noindent We observed that, $l_j\rightarrow 0$ in $\tau_S\Longleftrightarrow \forall m\in\N\, \exists j_m$ such that $\forall n$ $\frac{l_j}{2^{a_n}}+\Z\in\T_m$ if $j\geq j_m$.

\noindent Fix $m=1$. It suffices to show that $B=\{j_n\in\N\mid \exists n$ such that $\frac{l_j}{2^{a_n}}+\Z\notin\T_+$ if $j\geq j_1\}$ is cofinal in $\N$.

\noindent Fix $n$. Find $j_n$ such that $\frac{l_{j_n}}{2^{a_n}}=\frac{1}{2}\Longleftrightarrow l_{j_n}=2^{a_n-1}\Longleftrightarrow j_n=a_n-1$.

\noindent Thus, $B=\{a_n-1\mid n>1\}$. Since $(a_n)$ is strictly increasing, B is cofinal.

\noindent Hence $l_j\nrightarrow 0$ in $\tau_S$ and $\tau_2\neq\tau_S$.

\end{proof}

\noindent Next question would be:

\begin{question}

Fix $(a_n)\neq(b_n)$ strictly increasing sequences. $S_1=\{2^{-a_n}+\Z\mid n\in n\}$, $S_2=\{2^{-b_n}+\Z\mid n\in\N\}$. Is $\tau_{S_1}\neq\tau_{S_2}$?

\end{question}


\noindent We now compare two topologies on $\Z$. On the one hand, $\tau_S$, the topology of uniform convergence on $S=\{\frac{1}{2^{a_n}}+\Z\mid n \in \N\}\subset\T$, and on the other hand the linear group topology generated by the sequence $\{2^{a_n}\mid n\in\N\}$, whose neighbourhood basis of zero is given by $\{2^{a_n}\Z\mid n\in\N\}$

\begin{question}[Question 1]
Let $(a_n)_{n\in\N}$ be an increasing sequence. Then $\{2^{a_n}\Z\mid n\in\N\}$ 
forms a neighbourhood basis for a Haussdorff group topology on $\Z$.
\end{question}

\begin{question} [Question 2]
Let $(a_n)_{n\in\N}$ be an increasing sequence. Let $S=\{\frac{1}{2^{a_n}}+\Z\mid n\in\N\}$. Then $\{2^{a_n}\Z\mid n\in\N\}$ is a neighbourhood basis for $(\Z,\tau_S)$.
\end{question}

\noindent We shall see in this section the answer to both questions. The first one is true, but the resulting topology is the $2-adic$ topology. Unfortunately, the second one is false.

\begin{proposition}\label{tauS=<S>}
If $\{2^{a_n}\Z\mid n\in\N\}$ is a neighbourhood basis for $\tau_S$, then $(\Z,\tau_S)^\wedge=\langle S\rangle$. By proposition \ref{S=prufer}, this would mean that $(\Z,\tau_S)^\wedge=\Z(2^\infty)$.

\end{proposition}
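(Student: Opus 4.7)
My plan is to prove the two inclusions $(\Z,\tau_S)^\wedge\subseteq\langle S\rangle$ and $\langle S\rangle\subseteq(\Z,\tau_S)^\wedge$ separately, using the hypothesis that $\{2^{a_n}\Z\mid n\in\N\}$ is a neighbourhood basis at $0$ for $\tau_S$ in exactly the way that was used in Chapter 3 when computing $(\Z,\tau_2)^\wedge$. Every character $\chi\in\Hom(\Z,\T)$ is determined by its value at $1$, so I will freely identify a character with the element $\chi(1)\in\T$.

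For the first inclusion, let $\chi\in(\Z,\tau_S)^\wedge$. By continuity of $\chi$ at $0$, there exists a basic neighbourhood $2^{a_n}\Z$ with $\chi(2^{a_n}\Z)\subseteq \T_+$. Since $2^{a_n}\Z$ is a subgroup of $\Z$, its image $\chi(2^{a_n}\Z)$ is a subgroup of $\T$; but the only subgroup of $\T$ contained in $\T_+$ is $\{0+\Z\}$, hence $\chi(2^{a_n}\Z)=\{0+\Z\}$. Equivalently $2^{a_n}\chi(1)=0+\Z$ in $\T$, so writing $\chi(1)=\frac{k}{2^{a_n}}+\Z$ for some $k\in\Z$ we conclude $\chi\in\langle S\rangle$ (using the description of $\langle S\rangle=\Z(2^\infty)$ from Proposition \ref{S=prufer}).

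For the reverse inclusion, pick $\chi$ with $\chi(1)=\frac{k}{2^{a_n}}+\Z\in\langle S\rangle$. Then $2^{a_n}\Z\subseteq\ker(\chi)$, and since $\ker(\chi)$ is a subgroup of $\Z$ containing a neighbourhood of $0$ it is open by Lemma \ref{subgrupoabierto}. Hence $\chi^{-1}(\T_+)\supseteq\ker(\chi)$ is a neighbourhood of $0$, which gives continuity of $\chi$ at $0$ and, by translation invariance, everywhere. Therefore $\chi\in(\Z,\tau_S)^\wedge$.

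The main obstacle here is essentially nothing beyond invoking the hypothesis correctly: the whole argument is a duplicate of the computation of $\Z(2^\infty)$ as the dual of $(\Z,\tau_2)$, transported via the basis $\{2^{a_n}\Z\}$. The only subtle point to keep in mind is that the conclusion of the proposition is conditional: it holds under the assumed (and later to be shown false) identification of bases, so no genuine argument about $\tau_S$ beyond its hypothetical linearity is needed, and the second assertion then follows immediately from Proposition \ref{S=prufer}.
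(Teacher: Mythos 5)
Your proof is correct, and for the substantive half --- the inclusion $(\Z,\tau_S)^\wedge\subseteq\langle S\rangle$ --- it is identical to the paper's: continuity forces $\chi(2^{a_n}\Z)\subseteq\T_+$ for some basic neighbourhood, the image of a subgroup is a subgroup, and the only subgroup of $\T$ inside $\T_+$ is trivial, whence $\chi(1)=\frac{k}{2^{a_n}}+\Z$. Where you diverge is in the reverse inclusion: you prove it directly from the hypothesis, observing that $\ker(\chi)\supseteq 2^{a_n}\Z$ is open by Lemma \ref{subgrupoabierto} and hence $\chi$ is continuous, exactly transporting the Chapter~3 computation of $(\Z,\tau_2)^\wedge$. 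The paper instead invokes Remark \ref{3.5.2}, which already established $\Z(2^\infty)=(\Z,\tau_2)^\wedge\subseteq(\Z,\tau_S)^\wedge$ via the injectivity of $\id^\wedge$ for the continuous surjection $\id:(\Z,\tau_S)\rightarrow(\Z,\tau_2)$. The difference is immaterial for the proposition as stated, but the paper's route shows something slightly stronger: that inclusion holds \emph{unconditionally}, without assuming $\{2^{a_n}\Z\mid n\in\N\}$ is a basis for $\tau_S$, whereas your argument uses the hypothesis in both directions. Both are valid; yours is more self-contained, the paper's isolates which half of the equality actually depends on the (ultimately false) hypothesis.
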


\begin{proof}
 In \ref{3.5.2} we have already seen that $\Z(2^\infty)\subset(\Z,\tau_S)^\wedge$.

\noindent Let $\chi\in (\Z,\tau_S)^\wedge$.Since $\chi$ is continuous, 
there exists a neighbourhood $U=2^{a_n}\Z$ such that $\chi(U)=\chi(2^{a_n}\Z)\subset \T_+$. Now $2^{a_n}\Z$ is a subgroup of $\Z$, hence, its image by a homomorphism will be again a subgroup in $\T$. The only subgroup of $\T$ contained in $\T_+$ is $\{0\}$. Hence, $\chi(2^{a_n}\Z)=\{0+\Z\}$. Let $x+\Z=\chi(1)$. Hence, $\chi(2^{a_n})=2^{a_n}x+\Z=0+\Z$.
This implies that $2^{a_n}x\in \Z$.

\noindent Hence: $(\Z,\tau_S)^\wedge \subset\{\frac{k}{2^{a_n}}+\Z\mid \  k\in\Z, n\in\N\}=\langle S\rangle=\Z(2^\infty).$

\end{proof}

\begin{proof}[Question 2 is false if $(a_{n+1}-a_n)$ bounded]
Suppose $S=\{2^{-n}+\Z\}$ and that question 2 is true.

\noindent We have seen in \ref{tau_Sesdiscreta} that in this case, $\tau_S=\tau_{dis}$.

\noindent Hence $(\Z,\tau_S)=(\Z,\tau_{dis})$. We dualize.

\noindent By \ref{tauS=<S>} and \ref{S=prufer} $(\Z,\tau_S)^\wedge=\langle S \rangle=\Z(2^\infty)$. 
On the other hand, $(\Z,\tau_{dis})^\wedge=\T$. This leads to the contradiction  $\Z(2^\infty)=\T$.

\end{proof}

\begin{proposition}
Let $(a_n)$ as in \ref{bn}, $\mathcal{U}=(2^{a_n}\Z)$ and $\mathcal{V}=(2^n\Z)$. The $\tau_\mathcal{U}=\tau_\mathcal{V}$
\end{proposition}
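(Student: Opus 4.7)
The plan is to prove the equality of these two linear topologies by the standard device of checking that each neighbourhood basis at $0$ refines the other.

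First I would note that by the hypotheses of \ref{bn}, applied to $b_n=2^{a_n}$, the sequence $(a_n)$ must be strictly increasing with $a_0=0$, and therefore $a_n\geq n$ for every $n\in\N$. This observation is really the only arithmetic input we need, and it immediately gives the divisibility relations that control inclusions of the subgroups $2^{a_n}\Z$ and $2^n\Z$.

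Next, to show $\tau_\mathcal{U}\leq\tau_\mathcal{V}$, I would fix a basic $\tau_\mathcal{U}$-neighbourhood $2^{a_n}\Z$ and observe that, since $a_n\geq n$, the group $2^{a_n}\Z$ itself equals $2^{a_n}\Z\in\mathcal{V}$; more generally, for any $m\geq a_n$ we have $2^m\Z\subseteq 2^{a_n}\Z$, so a $\tau_\mathcal{V}$-basic neighbourhood sits inside it. For the reverse inclusion $\tau_\mathcal{V}\leq\tau_\mathcal{U}$, I would fix $2^m\Z\in\mathcal{V}$ and use that $(a_n)$ is strictly increasing and unbounded: pick $n$ with $a_n\geq m$, whence $2^{a_n}\Z\subseteq 2^m\Z$. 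Either inclusion among subgroups follows from the elementary fact that $2^{s}\Z\subseteq 2^{r}\Z$ iff $r\leq s$.

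There is no real obstacle here; the only subtlety worth mentioning explicitly is that one must invoke $a_0=0$ together with strict monotonicity to guarantee $a_n\geq n$, which is what makes the family $\{2^{a_n}\Z\mid n\in\N\}$ cofinal (under reverse inclusion) inside $\{2^n\Z\mid n\in\N\}$. Both families then define the same filter of $0$-neighbourhoods, so $\tau_\mathcal{U}=\tau_\mathcal{V}$, which in view of the previous section is exactly the $2$-adic topology $\tau_2$.
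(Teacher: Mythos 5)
Your proof is correct and follows essentially the same route as the paper's: every $2^{a_n}\Z$ already belongs to $\mathcal{V}$ (giving $\tau_{\mathcal{U}}\leq\tau_{\mathcal{V}}$), and the bound $a_n\geq n$ coming from $b_0=1$ and strict monotonicity makes $\mathcal{U}$ cofinal in $\mathcal{V}$ under reverse inclusion (giving $\tau_{\mathcal{V}}\leq\tau_{\mathcal{U}}$). Your write-up is in fact cleaner than the paper's, which states the same two refinement steps somewhat tersely.
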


\begin{proof}[proposition + question 1]
$\mathcal{U}\subseteq\mathcal{V}$, hence $\tau_\mathcal{U}\leq\tau_\mathcal{V}$.

\noindent Fix now $U\in\mathcal{U}$, $U=2^{a_{n_0}}\Z$. Since $(a_n)$ is increasing, $V=2^{a_{n_0}}\subseteq U$.  Hence $\mathcal{U}\supseteq\mathcal{V}$.

\noindent Since $\mathcal{U}$ is a neighbourhood basis for the $2-adic$ topology (which is Hausdorff), we have, also, proven question 1.
\end{proof}

\begin{proof} [Question 2 is false]
Suppose it is true, then $\tau_S$ has $(2^{b_n}\Z)$ as neighbourhood basis. By previous proposition $\tau_S=\tau_2$, which we know is false by \ref{tau2noestauS}

\end{proof}

\chapter{Generalization for $S$-topologies}
\noindent In chapter $3$ we focused our interest on the $2$-adic topology and on $S$-topologies generated by $S=\{2^{-a_n}+\Z\mid n\in\N\}$, which refine the $2$-adic topology.

\noindent In this chapter we shall generalize these topologies into a wider frame.

\noindent We shall define new topologies and obtain some general results.

\section{Linear group topologies on the integers}

\begin{proposition}
Let $(b_n)_{n\in\N}$ be a sequence as in $\ref{bn}$, then $\mathcal{U}=\{b_n\Z\mid n\in\N\}$ is a neighbourhood basis at $0$ for a Hausdorff topology.
\end{proposition}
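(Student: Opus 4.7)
The plan is to apply Lemma \ref{ctg} directly, using that each $b_n\Z$ is a subgroup of $\Z$, which trivializes most of the axioms.

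First I would check conditions (i)--(iii) of Lemma \ref{ctg}. Since $b_n\Z$ is a subgroup, we have $0 \in b_n\Z$, so (i) holds. Subgroups are symmetric, so $-b_n\Z = b_n\Z$ gives (ii) with $V = U$. Subgroups are closed under addition, so $b_n\Z + b_n\Z = b_n\Z$, which yields (iii) with $V = U$.

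The key step is condition (iv). Here I would use the hypothesis $b_n \mid b_{n+1}$: by induction, $b_n \mid b_m$ whenever $n \leq m$, so $b_m\Z \subseteq b_n\Z$. Given $U = b_n\Z$ and $V = b_m\Z$, set $k = \max(n,m)$ and $W = b_k\Z$; then $W \subseteq U \cap V$. (In particular, the family is totally ordered by reverse inclusion.) By Lemma \ref{ctg}, there is a unique group topology on $\Z$ for which $\mathcal{U}$ is a neighbourhood basis at $0$; since the basic neighbourhoods are subgroups, this topology is linear.

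For the Hausdorff property, Lemma \ref{ctg} reduces the claim to showing $\bigcap_{n\in\N} b_n\Z = \{0\}$. The conditions $b_n \mid b_{n+1}$ and $b_n \neq b_{n+1}$ force $b_{n+1} \geq 2b_n$, so by induction $b_n \geq 2^n b_0 = 2^n \to \infty$. Hence if $k \in \bigcap_n b_n\Z$, then $b_n \mid k$ for every $n$, which forces $|k| \geq b_n$ for all $n$ unless $k = 0$; thus $k = 0$. The main (and essentially only) obstacle is making sure the divisibility chain is used correctly to get both (iv) and the Hausdorff property; everything else is immediate from the subgroup structure.
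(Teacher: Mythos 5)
Your proof is correct and follows essentially the same route as the paper: verify conditions (i)--(iv) of Lemma \ref{ctg} using the subgroup structure and the divisibility chain $b_n\mid b_m$ for $n\le m$, then establish the Hausdorff property via $\bigcap_n b_n\Z=\{0\}$. Your only addition is to make explicit the growth estimate $b_n\ge 2^n$ justifying that final intersection claim, which the paper leaves implicit.
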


\begin{proof}
We should check conditions (i)-(iv) in \ref{ctg}.


\noindent (i) It is trivial that $0\in b_n\Z$ for all $n\in\N$.

\noindent (ii) We shall prove that $b_n\Z$ is symmetric. Let $k\in b_n\Z$. By definition $k=b_nz$ for some $z\in\Z$. Since $\Z$ is symmetric, $-z\in\Z$ and $-k=b_n(-z)\in b_n\Z$.

\noindent (iii) We shall prove that $b_n\Z+b_n\Z=b_n\Z$. Obviously, $b_n\Z+b_n\Z\supseteq b_n\Z$. 

\noindent Let $a,b\in b_n\Z$, we must check $a+b\in b_n\Z$. By definition, $a=b_nk_1;\ b=b_nk_2$. Hence, $a+b=b_n(k_1+k_2)$. Since $k_1+k_2\in\Z$, we get $a+b\in b_n\Z$.

\noindent (iv) Let $b_n\Z,b_m\Z\in\mathcal{U}$, and suppose $n\leq m$. 
We shall prove $b_n\Z\cap b_m\Z\supseteq b_m\Z$. 
It clearly suffices to show that $b_m\Z\subseteq b_n\Z$. Let $x\in b_m\Z$, then 
$x=b_mz=\frac{b_m}{b_n}b_nz$. By definition of $(b_n)$ and, since $m\geq n$, $\frac{b_m}{b_n}\in\Z$. Hence $x\in b_n\Z=b_n\Z$.

\noindent In order to prove that the resulting topology is Hausdorff, we must see that $\cap_{n\in\N}b_n\Z=\{0\}$.

\noindent Obviously $\cap_{n\in\N}b_n\Z\supseteq \{0\}$.

\noindent $x\in b_n\Z \Longleftrightarrow x \mbox{ is multiple of } b_n$. Hence $x\in\cap b_n\Z\Longleftrightarrow b_n\mid x\mbox{ for all } n\Rightarrow x=0$

\end{proof}

\begin{notation}
Let $(\Z,\tau_{(b_n\Z)})$ be the group of the integers endowed with the topology defined by the neighbourhood basis $\mathcal{U}=\{b_n\Z\mid n\in\N\}$.

\end{notation}

\begin{proposition}
$(\Z,\tau_{(b_n\Z)})^\wedge=\{\frac{k}{b_n}+\Z\mid n\in\N, k\in\Z\}$.
\end{proposition}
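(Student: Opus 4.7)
The proof is a direct adaptation of the argument given earlier in the paper for $(\Z,\tau_2)^\wedge=\Z(2^\infty)$, which is the special case $b_n=2^n$. The plan is to prove the two inclusions separately, using the fact that $\tau_{(b_n\Z)}$ is linear (its neighbourhood basis at $0$ consists of subgroups) together with the elementary observation that $\{0+\Z\}$ is the only subgroup of $\T$ contained in $\T_+$.

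For the inclusion $\supseteq$, I would fix $k\in\Z$, $n\in\N$, and consider the character $\chi_{k,n}\colon\Z\to\T$ defined by $j\mapsto j\frac{k}{b_n}+\Z$. Its kernel contains $b_n\Z$, since $b_n\cdot\frac{k}{b_n}=k\in\Z$. Hence $\ker(\chi_{k,n})$ is a subgroup of $\Z$ that contains the $0$-neighbourhood $b_n\Z$, so by Lemma \ref{subgrupoabierto} it is open. Since $\chi_{k,n}^{-1}(\T_+)\supseteq\ker(\chi_{k,n})$, the character $\chi_{k,n}$ is continuous at $0$, hence continuous everywhere. This gives $\{\frac{k}{b_n}+\Z\mid n\in\N,\ k\in\Z\}\subseteq(\Z,\tau_{(b_n\Z)})^\wedge$.

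For the inclusion $\subseteq$, I would take an arbitrary $\chi\in(\Z,\tau_{(b_n\Z)})^\wedge$. By continuity of $\chi$ at $0$, there exists $n\in\N$ such that $\chi(b_n\Z)\subseteq\T_+$. But $\chi(b_n\Z)$ is a subgroup of $\T$, and as noted above, the only subgroup of $\T$ contained in $\T_+$ is the trivial one; therefore $\chi(b_n\Z)=\{0+\Z\}$. Writing $x+\Z:=\chi(1)$ with $x\in\R$, we then have $\chi(b_n)=b_nx+\Z=0+\Z$, which forces $b_nx\in\Z$, i.e.\ $x=\frac{k}{b_n}$ for some $k\in\Z$. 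Since $\chi$ is determined by $\chi(1)$, this shows $\chi$ coincides with the character $\frac{k}{b_n}+\Z$, completing the inclusion.

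No substantial obstacle is expected: the argument is essentially identical to the one already carried out for the $2$-adic case in the section on $(\Z,\tau_2)$, the only modification being the replacement of the sequence $2^n$ by $b_n$, which is permitted precisely because the divisibility property $b_n\mid b_{n+1}$ built into Proposition \ref{bn} guarantees that $\{b_n\Z\mid n\in\N\}$ is a neighbourhood basis of subgroups for $\tau_{(b_n\Z)}$.
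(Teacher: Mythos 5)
Your proof is correct and follows essentially the same route as the paper's: the inclusion $\subseteq$ via the observation that $\chi(b_n\Z)$ is a subgroup of $\T$ contained in $\T_+$ and hence trivial, and the inclusion $\supseteq$ via the open kernel argument using Lemma \ref{subgrupoabierto}. No meaningful differences to report.
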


\begin{proof}
First we show that $(\Z,\tau_{(b_n\Z)})^\wedge\subseteq\{\frac{k}{b_n}+ºZ\mid n\in\N, k\in\Z\}$ holds. 
Let $\chi\in(\Z,\tau_{(b_n\Z)})^\wedge$. Since $\chi$ is continuous, 
there exists a neighbourhood $b_n\Z$ such that $\chi(b_n\Z)\subset\T_+$. Hence, $\chi(b_n\Z)$ is a subgroup contained in $\T_+$; therefore $\chi(b_n\Z)=\{0+\Z\}$.

\noindent Let $x+\Z=\chi(1)\in\T$, $\chi(b_n)=b_nx+\Z=0+\Z$. 
Hence $b_nx\in \Z\Longleftrightarrow \mbox{ there exists } k\in\Z\mbox{ such that }x=\frac{k}{b_n}$.

\noindent In conclussion: $(\Z,\tau_{(b_n\Z)})^\wedge\subseteq\{\frac{k}{b_n}+\Z\mid n\in\N, k\in\Z\}$

\noindent In order to prove that $(\Z,\tau_{(b_n\Z)})^\wedge\supseteq\{\frac{k}{b_n}+\Z\mid n\in\N, k\in\Z\}$; 
let $k\in\Z, n\in\N$. 

\noindent Observe that $\ker(\frac{k}{b_n}+\Z)= \{j\in\Z\mid j\frac{k}{b_n}+\Z=0+\Z \}\supseteq b_n\Z$.

\noindent By \ref{subgrupoabierto}, $\ker(\frac{k}{b_n}+\Z)$ is open.

\noindent Since $(\frac{k}{b_n}+\Z)^{-1}(\T_n)\supset \ker (\frac{k}{b_n}+\Z)$ our homomorphism is continuous.

\end{proof}

\noindent Let $(l_j)\subset\Z$ be a sequence. We want to find a criterion for $l_j\rightarrow 0$ in $\tau_{(b_n\Z)}$.

\begin{proposition}
$l_j\rightarrow 0 \mbox{ in }\tau_{(b_n\Z)}\Longleftrightarrow \mbox{ For all } n\in\N\mbox{ there exists } j_n\mbox{ such that } b_n\mid l_j\mbox{ if } j\geq j_n$
\end{proposition}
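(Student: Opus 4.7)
The plan is to unravel the definition of convergence relative to the neighborhood basis $\{b_n\Z \mid n\in\N\}$ and translate the condition $l_j \in b_n\Z$ into the divisibility statement $b_n \mid l_j$. Since both directions follow directly from these two elementary observations, this is really a bookkeeping argument rather than a substantive one.

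For the forward implication, I would start by assuming $l_j \to 0$ in $\tau_{(b_n\Z)}$. Fix $n \in \N$. Since $b_n\Z$ is an element of the chosen neighborhood basis at $0$, it is in particular a neighborhood of $0$, so by the definition of convergence there exists $j_n$ such that $l_j \in b_n\Z$ for every $j \geq j_n$. But $l_j \in b_n\Z$ means exactly that $l_j$ is an integer multiple of $b_n$, i.e.\ $b_n \mid l_j$, which is what we wanted.

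For the reverse implication, assume the divisibility condition. To check convergence, take any neighborhood $U$ of $0$ in $\tau_{(b_n\Z)}$. Since $\{b_n\Z \mid n \in \N\}$ is a neighborhood basis at $0$, there exists $n_0$ with $b_{n_0}\Z \subseteq U$. By hypothesis there is a $j_{n_0}$ such that $b_{n_0} \mid l_j$ whenever $j \geq j_{n_0}$; equivalently $l_j \in b_{n_0}\Z \subseteq U$ for all such $j$. Thus $l_j \to 0$ in $\tau_{(b_n\Z)}$.

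There is no real obstacle here: the only fact used beyond the definitions is that $\{b_n\Z\}$ genuinely forms a neighborhood basis at $0$, which was established in the preceding proposition of this section. Hence the proof reduces to the two one-line translations above, and no delicate estimates or further structural results on the sequence $(b_n)$ are needed.
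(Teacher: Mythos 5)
Your proof is correct and follows essentially the same route as the paper: both simply unwind the definition of convergence with respect to the neighbourhood basis $\{b_n\Z\mid n\in\N\}$ and translate $l_j\in b_n\Z$ into $b_n\mid l_j$. If anything, your version is slightly more complete, since you spell out the reverse implication (passing from an arbitrary neighbourhood $U$ to a basic one $b_{n_0}\Z\subseteq U$), which the paper leaves implicit.
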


\begin{proof}
Let $l_j\rightarrow 0\mbox{ in }\tau_{(b_n\Z)}$;
 by definition of convergence, for every $n\in\N$, there exists $j_n$ such that $l_j\in b_n\Z$ if $j\geq j_n$; or equivalently: 
$l_j\rightarrow 0 \mbox{ in }\tau_{(b_n\Z)}\Longleftrightarrow \mbox{ for all } n\in\N\mbox{ there exists } 
j_n\mbox{ such that } b_n\mid l_j\mbox{ if } j\geq j_n$.

\end{proof}

\section{Generalization of $S$-topologies}

\begin{notation}
\noindent Let $(b_n)_{n\in\N_0}$ as in \ref{bn}. Define $S=\{\frac{1}{b_n}+\Z\mid n\in\N\}$.

\noindent Then $V_{S,m}=\{k\in\Z\mid \mbox{ for all } z+\Z\in S\ zk+\Z\in\T_m\}=\{k\in\Z\mid \frac{k}{b_n}+\Z\in\T_m \mbox{ for all } n\in\N\}$
\end{notation}

\noindent We look for a characterization of sequences converging to $0$ in $\tau_S$.

\begin{proposition}
$l_j\rightarrow 0\mbox{ in }\tau_S\Longleftrightarrow \mbox{ for all } m\in\N\mbox{ there exists }j_m\mbox{ such that }\frac{l_j}{b_n}+\Z\in\T_m\mbox{ for all }n\in\N \mbox{ and } j\geq j_m$
\end{proposition}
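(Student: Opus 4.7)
The plan is to unfold the definition of convergence in the topological group $(\Z,\tau_S)$ and then substitute the explicit description of the basic neighbourhoods $V_{S,m}$ that was already established in the preceding notation.

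First I would recall that $\tau_S$ is defined by taking $\mathcal{U}=\{V_{S,m}\mid m\in\N\}$ as a neighbourhood basis at $0$ (verified earlier in the section on $S$-topologies via Lemma \ref{ctg}). Since $\mathcal{U}$ is a neighbourhood basis at $0$, a sequence $(l_j)\subseteq\Z$ converges to $0$ in $\tau_S$ if and only if for every $m\in\N$ there exists $j_m\in\N$ such that $l_j\in V_{S,m}$ whenever $j\geq j_m$. This reduces the statement to a pure translation between membership in $V_{S,m}$ and the condition on the quotient values $\frac{l_j}{b_n}+\Z$.

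Next I would invoke the displayed characterization in the preceding notation, namely $V_{S,m}=\{k\in\Z\mid \frac{k}{b_n}+\Z\in\T_m\text{ for all }n\in\N\}$, which itself comes straight from the definition $V_{S,m}=\{k\in\Z\mid \chi(k)\in\T_m\ \forall \chi\in S\}$ together with the identification of each $\chi\in S$ with multiplication by $\frac{1}{b_n}$ modulo $\Z$. Substituting this description into the previous equivalence yields exactly the claimed statement: $l_j\to 0$ in $\tau_S$ if and only if for every $m\in\N$ there exists $j_m$ such that $\frac{l_j}{b_n}+\Z\in\T_m$ for every $n\in\N$ and every $j\geq j_m$.

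Both implications follow from the same chain of equivalences, so the proof is just a matter of stringing together the definitions; there is no real obstacle to overcome. The only thing that merits a sentence is the observation that the quantifier "for all $n\in\N$" lives inside the membership $l_j\in V_{S,m}$, so it must appear to the right of the quantifier "there exists $j_m$" in the final statement, which is exactly how it is phrased.
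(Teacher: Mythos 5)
Your argument is correct and matches the paper's own proof: both simply unfold the definition of convergence with respect to the neighbourhood basis $\{V_{S,m}\mid m\in\N\}$ and then substitute the description $V_{S,m}=\{k\in\Z\mid \frac{k}{b_n}+\Z\in\T_m\mbox{ for all }n\in\N\}$. Your remark about the placement of the quantifier over $n$ is a welcome clarification, but the substance of the argument is identical.
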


\begin{proof}
Fix $l_j\rightarrow 0$ in $\tau_S$.

\noindent By definition (of convergence) for all $m\in\N$, there exists $j_m$ such that $l_j\in V_{S,m}$ if $j\geq j_m$.

\noindent $l_j\in V_{S,m}\Longleftrightarrow\frac{l_j}{b_n}\in\T_m$ for all $n\in\N$.

\noindent Combining both statements we get: $l_j\rightarrow 0\mbox{ in }\tau_S\Longleftrightarrow \mbox{ for all } m\in\N\mbox{ there exists }j_m\mbox{ such that }\frac{l_j}{b_n}+\Z\in\T_m\mbox{ for all }n\in\N \mbox{ if } j\geq j_m$.

\end{proof}

\begin{proposition}
Let $(b_n)_{n\in\N_0}$ as in \ref{bn}. Put $S=\{\frac{1}{b_n}+\Z\mid n\in\N\}$ and $(l_j)\subset\Z$.

\noindent Then $l_j\rightarrow 0$ in $\tau_S\Rightarrow l_j\rightarrow 0$ in $\tau_{(b_n\Z)}$.
\end{proposition}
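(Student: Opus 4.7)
The plan is to mimic the proof given earlier in the chapter for the implication $\tau_S \Rightarrow \tau_2$, since the structure of the argument is the same: exploit the characterization of $\tau_S$-convergence by suitably choosing the parameter $m$ in terms of the target neighbourhood $b_{n_0}\Z$, then invoke the classical fact that the only subgroup of $\T$ contained in $\T_+$ is the trivial one.

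Concretely, fix an arbitrary $n_0 \in \N$; I would like to produce an index $j_{n_0}$ such that $b_{n_0} \mid l_j$ for all $j \geq j_{n_0}$. Apply the hypothesis $l_j \to 0$ in $\tau_S$ with the particular choice $m := b_{n_0}$: this yields some $j_{n_0}$ such that $\frac{l_j}{b_n} + \Z \in \T_{b_{n_0}}$ for every $n \in \N$ and every $j \geq j_{n_0}$. Specialising to $n = n_0$, and recalling that $\T_m = (\tfrac{1}{m})\T_+$, this means that
\[
\frac{l_j}{b_{n_0}} + \Z,\ \frac{2 l_j}{b_{n_0}} + \Z,\ \ldots,\ \frac{b_{n_0}\, l_j}{b_{n_0}} + \Z
\]
all lie in $\T_+$. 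Because $b_{n_0} \cdot \frac{l_j}{b_{n_0}} + \Z = l_j + \Z = 0 + \Z$, the element $\frac{l_j}{b_{n_0}} + \Z$ has order dividing $b_{n_0}$, so the list above exhausts the cyclic subgroup it generates. This subgroup therefore sits inside $\T_+$, and by the standard fact (already used twice earlier, in the proofs for $\tau_2$ and for $\mathrm{id}\colon(\Z,\tau_S)\to(\Z,\tau_2)$) the only such subgroup is $\{0+\Z\}$. Hence $\frac{l_j}{b_{n_0}} + \Z = 0 + \Z$, i.e.\ $b_{n_0} \mid l_j$, which is exactly the criterion for $l_j \to 0$ in $\tau_{(b_n\Z)}$ established in the previous proposition.

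There is really no genuine obstacle here: the only point requiring some care is the observation that $\{k \frac{l_j}{b_{n_0}} + \Z : 1 \leq k \leq b_{n_0}\}$ equals the cyclic subgroup $\langle \frac{l_j}{b_{n_0}} + \Z\rangle$, which is immediate from the fact that this element is killed by $b_{n_0}$. Everything else is a direct template of the $\tau_S \Rightarrow \tau_2$ argument, with the prime power $2^{a_{n_0}}$ replaced by the more general integer $b_{n_0}$ from the sequence $(b_n)$.
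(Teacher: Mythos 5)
Your proof is correct and follows exactly the same route as the paper's: fix $n_0$, take $m=b_{n_0}$, specialise the $\tau_S$-convergence criterion to $n=n_0$, and conclude that $\langle \frac{l_j}{b_{n_0}}+\Z\rangle$ is a subgroup of $\T$ contained in $\T_+$, hence trivial. Your explicit remark that the element is killed by $b_{n_0}$, so the listed multiples exhaust the cyclic subgroup, is a small point the paper leaves implicit; otherwise the two arguments coincide.
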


\begin{proof}
Let $n_0\in\N$. Fix $m=b_{n_0}$.

\noindent By hypothesis, there exists $j_m$ such that $\frac{l_j}{b_n}+\Z\in\T_m$ for all $n$ and $j\geq j_m$.

\noindent This means that $\frac{l_j}{b_n}+\Z,\dots,\frac{ml_j}{b_n}+\Z\in\T_+\Longleftrightarrow\frac{l_j}{b_n}+\Z, \dots,\frac{b_{n_0}l_j}{b_n}+\Z\in\T_+$ for all $n$.

\noindent In particular this is true when $n=n_0$; then $\frac{l_j}{b_{n_0}}+\Z,\dots,\frac{b_{n_0}l_j}{b_{n_0}}+\Z\in\T_+$.

\noindent But $\frac{l_j}{b_{n_0}}+\Z,\dots,\frac{b_{n_0}l_j}{b_{n_0}}+\Z=\langle\frac{l_j}{b_{n_0}}+\Z\rangle $, which is a subgroup contained in $\T_+$. 

\noindent Hence $\frac{l_j}{b_{n_0}}+\Z=0+\Z$. Or, equivalently, $b_{n_0}\mid l_j$ if $j\geq j_m$. That is, $l_j\rightarrow 0$ in $\tau_{(b_n\Z)}$.
 
\end{proof}

\begin{remark}
The previous proposition implies that 

\centerline{$\id:(\Z,\tau_S)\rightarrow(\Z,\tau_{(b_n\Z)})$}

\noindent is continuous. Since it is also surjective, the mapping:

\centerline{$\id^{\wedge}:(\Z,\tau_{(b_n\Z)})^\wedge\rightarrow(\Z,\tau_S)^\wedge$} 

\noindent is injective, and $(\Z,\tau_{(b_n\Z)})^\wedge\subseteq(\Z,\tau_S)^\wedge$

\end{remark}

\section{Characterization of sequences converging to $0$ in $\tau_S$}

\noindent By \ref{bn}, we can write $l_j=\sum_{s=0}^{N(l_j)}b_sk_{s,j}$. We want to find information about $k_{s,j}$ in order that $(l_j)$ is convergent to $0$ in $\tau_S$.

\begin{proposition}
Let $(l_j)$ be a null sequence in $\tau_S$. Let $k_{q,j}$ be the coefficients introduced above. For any $s$ there exists $j_q$ such that $k_{q,j}=0$ if $j\geq j_q$.
\end{proposition}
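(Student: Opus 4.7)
I would prove this by induction on $q$, using the convergence criterion proved in the previous proposition, namely that $l_j \to 0$ in $\tau_S$ means that for every $m \in \N$ there is a $j_m$ with $\frac{l_j}{b_n} + \Z \in \T_m$ for all $n$ and all $j \geq j_m$. The key algebraic input is that the decomposition of Proposition \ref{bn} gives bounds $|k_{s,j}| \leq \frac{b_{s+1}}{2 b_s}$, and that $b_n \mid b_{n+1}$ for all $n$.

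Base case $q=0$: Apply the criterion with $n=1$. Since $b_1 \mid b_s$ for all $s \geq 1$, the sum $\sum_{s \geq 1} b_s k_{s,j}$ is divisible by $b_1$, so
\[
\frac{l_j}{b_1} + \Z \;=\; \frac{k_{0,j}}{b_1} + \Z.
\]
From $|k_{0,j}| \leq b_1/(2 b_0) = b_1/2$ we get $\frac{k_{0,j}}{b_1} \in [-\tfrac12, \tfrac12]$, so this class coincides with the real number $\frac{k_{0,j}}{b_1}$ itself. Choosing $m$ with $\frac{b_1}{4m} < 1$, the inclusion in $\T_m$ forces $|k_{0,j}| < 1$, hence $k_{0,j}=0$ for all $j \geq j_m$; set $j_0 := j_m$.

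Inductive step: Assume $j_0 \leq j_1 \leq \cdots \leq j_{q-1}$ have been chosen so that $k_{s,j}=0$ for $s \leq q-1$ and $j \geq j_{q-1}$. Apply the criterion with $n = q+1$. For $s \leq q-1$ the coefficient vanishes by induction, while for $s \geq q+1$ we have $b_{q+1} \mid b_s$ and so $\frac{b_s k_{s,j}}{b_{q+1}} \in \Z$. Therefore, for $j \geq j_{q-1}$,
\[
\frac{l_j}{b_{q+1}} + \Z \;=\; \frac{b_q k_{q,j}}{b_{q+1}} + \Z.
\]
The bound $|k_{q,j}| \leq \frac{b_{q+1}}{2 b_q}$ places $\frac{b_q k_{q,j}}{b_{q+1}}$ in $[-\tfrac12,\tfrac12]$, so again the residue equals the real number itself. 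Pick $m$ large enough that $\frac{b_{q+1}}{4 m b_q} < 1$; then for $j \geq \max(j_{q-1}, j_m)$, the condition $\frac{b_q k_{q,j}}{b_{q+1}} \in [-\tfrac{1}{4m}, \tfrac{1}{4m}]$ implies $|k_{q,j}| < 1$, i.e. $k_{q,j}=0$. Set $j_q := \max(j_{q-1}, j_m)$ to complete the induction.

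The only substantive point is the reduction of $\frac{l_j}{b_{q+1}} + \Z$ to a single term, which rests on the divisibility hypothesis $b_n \mid b_{n+1}$ from Proposition \ref{bn}; after that, the bound on the coefficients together with the freedom to pick $m$ arbitrarily large trivializes the conclusion. There is no real obstacle, just bookkeeping of the indices $j_q$.
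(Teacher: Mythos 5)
Your proof is correct, and it is the same induction on the coefficient index as the paper's: in both arguments one divides $l_j$ by a suitable $b_n$, kills the lower coefficients by the inductive hypothesis and the higher ones by the divisibility $b_n\mid b_{n+1}$, and then uses the bound $\mid k_{q,j}\mid\leq\frac{b_{q+1}}{2b_q}$ from \ref{bn} to conclude that the single surviving term vanishes. The one real difference is the key input used to force that term to zero: the paper first invokes the earlier proposition that a $\tau_S$-null sequence is $\tau_{(b_n\Z)}$-null, so that $b_{q+2}\mid l_j$ eventually, and then argues that $\frac{b_{q+1}}{b_{q+2}}k_{q+1,j}$ is an \emph{integer} of absolute value at most $\frac{1}{2}$, hence zero; you instead work directly with the $\tau_S$-neighbourhoods, choosing $m$ so large that $\frac{b_{q+1}}{4mb_q}<1$ and reading off $\mid k_{q,j}\mid<1$ from membership in $\T_m$. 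Your route is slightly more self-contained (it does not rely on the comparison between $\tau_S$ and the linear topology), at the mild cost of letting $m$ depend on $q$; the paper's route gets an exact integrality statement and reuses a lemma it needs anyway. Both are sound, and the step you flag as the only substantive one --- the reduction of $\frac{l_j}{b_{q+1}}+\Z$ to a single term --- is handled identically in the two arguments.
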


\begin{proof}
We prove this result by induction.

\noindent We begin by proving the result for $q=0$. Since $l_j\rightarrow 0$ in $\tau_{(b_n\Z)}$ , there exists $j_0\in\N$ such that $b_1\mid l_j$ for all $j\geq j_0$.

\noindent $l_j=\sum_{s=0}^{N(l_j)}b_sk_{s,j}$. Then  $\frac{l_j}{b_1}=\frac{b_0}{b_1}k_{0,j}+\sum_{s=1}^{N(l_j)}\frac{b_s}{b_1}k_{s,j}$. Since, $\frac{b_s}{b_1}\in\Z$ for $s\geq 1$, we get that  $\sum_{s=1}^{N(l_j)}\frac{b_s}{b_1}k_{s,j}\in\Z$ and hence, $\frac{b_0}{b_1}k_{0,j}\in\Z$, $\forall j\geq j_0$. Since $\mid k_{0,j}\mid\leq\frac{b_1}{2b_0}$, we get $\mid \frac{b_0}{b_1}k_{0,j}\mid\leq\frac{1}{2}$. Hence $\frac{b_0}{b_1}k_{0,j}\in\Z$ if and only if $k_{0,j}=0$.

\noindent As conclusion, $k_{0,j}=0$ if $j\geq j_0$.

\noindent Let us prove the inductive step; that is, we suppose that there exist $j_0\leq j_1\leq\cdots\leq j_q$ such that $k_{q,j}=0$ for all $j\geq j_q$.

\noindent By definition, there exists $j_{q+1}\geq j_q$ such that $b_{q+2}\mid l_j$ for all $j\geq j_{q+1}$ (indexes are different from the ones in previous section).

\noindent Now, $l_j=\sum_{s=0}^{N(l_j)}b_sk_{s,j}= \sum_{s=q+1}^{N(l_j)}b_sk_{s,j}$ for $j\geq j_{q+1}$; the last equality holds by the inductive hypothesis.

\noindent $\frac{l_j}{b_{q+2}}= \frac{b_{q+1}}{b_{q+2}}k_{q+1,j}+\sum_{s=q+2}^{N(l_j)}\frac{b_sk_{s,j}}{b_{q+2}}$.

\noindent Since $\frac{l_j}{b_{q+2}}\in\Z$ for $j\geq j_{q+1}$, $\sum_{s=q+2}^{N(l_j)}\frac{b_sk_{s,j}}{b_{q+2}}\in\Z$ and the equality holds, we deduce that $\frac{b_{q+1}}{b_{q+2}}k_{q+1,j}\in\Z$.

\noindent As before, $\mid \frac{b_{q+1}}{b_{q+2}}k_{q+1,j}\mid\leq\frac{1}{2}$. This implies that $\frac{b_{q+1}}{b_{q+2}}k_{q+1,j}\in\Z$ if and only if $k_{q+1,j}=0$. And the result follows.

\end{proof}

\section{Characterization of the elements of $V_{S,m}$}

\noindent In this section, we consider $(b_n)_{n\in\N_0}$ as in \ref{bn}; $S=\{\frac{1}{b_n}+\Z\mid n\in\N\}\subset\T$, and, consequently, $V_{S,m}=\{k\in\Z\mid \frac{k}{b_n}\in\T_m\mbox{ for all }n\}$.

\noindent First, we shall find a characterization for $k$ to be in $V_{S,m}$. Later, we try to find weaker (but on the other hand easier) sufficient or necessary conditions for $k$ to be in $V_{S,m}$.

\begin{theorem}\label{caracterizacion}
Let $k=\sum_{s=0}^{N(k)}k_sb_s$ be an integer, and let $k_0,\dots,k_{N(k)}$ be the coefficients obtained in \ref{bn}. Then $k\in V_{S,m}\Longleftrightarrow \sum_{s=0}^{n-1}\frac{k_sb_s}{b_n}\Z\in\T_m$ for all $n\in\N$.
\end{theorem}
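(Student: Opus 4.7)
The plan is to prove the equivalence by a direct computation, exploiting the divisibility chain $b_n \mid b_{n+1}$ from \ref{bn}. Recall the definition $V_{S,m}=\{k\in\Z\mid \frac{k}{b_n}+\Z\in\T_m \text{ for all } n\in\N\}$, so it suffices to show that for every fixed $n$,
\[
\frac{k}{b_n}+\Z \;=\; \sum_{s=0}^{n-1}\frac{k_sb_s}{b_n}+\Z
\]
as elements of $\T$.

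The first step is to split the representation. Writing $k=\sum_{s=0}^{N(k)}k_sb_s$ from \ref{bn} and dividing by $b_n$ gives
\[
\frac{k}{b_n}=\sum_{s=0}^{n-1}\frac{k_sb_s}{b_n}+\sum_{s=n}^{N(k)}\frac{k_sb_s}{b_n},
\]
with the convention that the second sum is empty when $n>N(k)$ (equivalently, $k_s=0$ for $s>N(k)$). For any $s\geq n$, iterating the divisibility $b_n\mid b_{n+1}\mid\cdots\mid b_s$ yields $b_n\mid b_s$, so $\frac{b_s}{b_n}\in\Z$, whence $\frac{k_sb_s}{b_n}\in\Z$. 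Consequently $\sum_{s=n}^{N(k)}\frac{k_sb_s}{b_n}\in\Z$.

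The second step is to pass to $\T=\R/\Z$: the integer tail vanishes modulo $\Z$, so $\frac{k}{b_n}+\Z=\sum_{s=0}^{n-1}\frac{k_sb_s}{b_n}+\Z$. Applying this identity for every $n\in\N$, the condition $\frac{k}{b_n}+\Z\in\T_m$ for all $n$ translates verbatim into $\sum_{s=0}^{n-1}\frac{k_sb_s}{b_n}+\Z\in\T_m$ for all $n$, yielding the desired characterization in both directions simultaneously.

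There is essentially no obstacle here: the whole argument rests on the elementary observation that the ``tail'' of the $b$-adic expansion of $k$ relative to the threshold $b_n$ contributes an integer to $k/b_n$ and therefore disappears in $\T$. The only small care to be taken is in the edge case $n>N(k)$, where the second sum is empty but the truncated sum $\sum_{s=0}^{n-1}\frac{k_sb_s}{b_n}$ still agrees with $\frac{k}{b_n}$ because the added indices have $k_s=0$; this is handled by the convention just noted.
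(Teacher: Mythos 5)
Your proof is correct and follows essentially the same route as the paper: both split $k/b_n$ into the truncated sum and the tail $\sum_{s\geq n}k_sb_s/b_n$, observe that the divisibility chain $b_n\mid b_s$ for $s\geq n$ makes the tail an integer, and conclude that the two expressions coincide in $\T$. Your write-up is in fact slightly more careful than the paper's, notably in handling the edge case $n>N(k)$.
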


\begin{proof}
By \ref{bn} we write $k=\sum_{s=0}^{N(k)}k_sb_s$.

\noindent Then, $\frac{k}{b_n}=\sum_{s=0}^{N(k)}\frac{k_sb_s}{b_n}$, and $\frac{k}{b_n}+\Z=\sum_{s=0}^{N(k)}\frac{k_sb_s}{b_n}+\Z$.

\noindent Since $b_n\mid b_{m}$ if $m\geq n$ and $k_m\in\Z$, since $\frac{k}{b_n}+\Z\in\T_m$ the assertion follows.

\end{proof}

\begin{lemma}\label{unmedio}
$\mid\sum_{s=0}^{n-1}\frac{k_sb_s}{b_n}\mid\leq\frac{1}{2}$ for all $n\in\N$
\end{lemma}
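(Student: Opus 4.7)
The plan is to derive this bound as a direct consequence of Proposition \ref{bn}, which already provides the key inequality $|\sum_{i=0}^{n}k_i b_i| \leq \frac{b_{n+1}}{2}$ valid for every index. So there is no real new work to do beyond a single index shift and a division.

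Concretely, I would first apply Proposition \ref{bn} at index $n-1$ in place of $n$, which yields
\[
\Bigl|\sum_{s=0}^{n-1} k_s b_s\Bigr| \leq \frac{b_n}{2}.
\]
Then I would simply pull the common factor $\frac{1}{b_n}$ out of the sum in the statement of the lemma:
\[
\Bigl|\sum_{s=0}^{n-1} \frac{k_s b_s}{b_n}\Bigr| = \frac{1}{b_n}\Bigl|\sum_{s=0}^{n-1} k_s b_s\Bigr| \leq \frac{1}{b_n}\cdot\frac{b_n}{2} = \frac{1}{2},
\]
which is exactly what the lemma claims. I should also briefly address the edge case $n=0$, where the sum is empty and the inequality $0 \leq \frac{1}{2}$ is vacuous (or alternatively the lemma is stated for $n\in\N$, so one can tacitly assume $n\geq 1$, matching the convention used in Proposition \ref{bn}).

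There is no real obstacle here: the only thing to verify is that the hypothesis of Proposition \ref{bn} can legitimately be invoked, i.e.\ that the $k_s$ appearing in the statement of the current lemma are exactly the coefficients produced by the recursive construction in \ref{bn} (as implicitly assumed in Theorem \ref{caracterizacion} just above). Once this identification is made, the bound is a one-line consequence.
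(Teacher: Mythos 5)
Your proof is correct and is exactly the paper's argument: factor out $\frac{1}{b_n}$ and apply the bound $\bigl|\sum_{i=0}^{n}k_ib_i\bigr|\leq\frac{b_{n+1}}{2}$ from Proposition \ref{bn} at index $n-1$. The remark about identifying the $k_s$ with the coefficients constructed in \ref{bn} is a reasonable (and correct) clarification, but otherwise there is nothing to add.
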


\begin{proof}
$\mid\sum_{s=0}^{n-1}\frac{k_sb_s}{b_n}\mid=\frac{\mid\sum_{s=0}^{n-1}k_sb_s\mid}{b_n} \stackrel{\ref{bn}}{\leq} \frac{\frac{b_n}{2}}{b_n}=\frac{1}{2}$.

\end{proof}

\noindent The following corollary is the characterization we were seeking.

\begin{corollary} \label{car}
$k\in V_{S,m}\Longleftrightarrow \mid \sum_{s=0}^{n-1}\frac{k_sb_s}{b_n}   \mid\leq\frac{1}{4m}$ for all $n\in\N_0$.
\end{corollary}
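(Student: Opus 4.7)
The plan is to combine Theorem \ref{caracterizacion} with Lemma \ref{unmedio} and observe that the size constraint supplied by the lemma allows the quotient condition ``$\,+\,\Z\in\T_m$'' to be replaced by a genuine inequality on real numbers.

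First I would invoke Theorem \ref{caracterizacion} to rewrite membership in $V_{S,m}$ as the family of conditions
$$\sum_{s=0}^{n-1}\frac{k_sb_s}{b_n}+\Z\in\T_m\quad\text{for all }n\in\N.$$
Set $x_n:=\sum_{s=0}^{n-1}\frac{k_sb_s}{b_n}\in\R$. By Lemma \ref{unmedio}, $|x_n|\leq\frac12$ for every $n$.

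Next I would use the identification $\T\approx[-\frac12,\frac12)$ with $\T_m=[-\frac{1}{4m},\frac{1}{4m}]+\Z$. For a real number $x$ with $|x|\leq\frac12$, the coset $x+\Z$ has $x$ itself (or, in the edge case $x=\frac12$, the representative $-\frac12$) as its canonical representative in $[-\frac12,\frac12)$. Since $m\geq1$ implies $\frac{1}{4m}\leq\frac14<\frac12$, for such $x$ we have the clean equivalence
$$x+\Z\in\T_m\;\Longleftrightarrow\;|x|\leq\frac{1}{4m}.$$
Applying this to $x=x_n$ converts the condition of Theorem \ref{caracterizacion} into the stated inequality, yielding the corollary in both directions.

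There is no real obstacle here; the content lies entirely in the preceding theorem and lemma, and the only subtlety is confirming that the bound $\frac12$ from Lemma \ref{unmedio} is small enough that passage between $\T$ and the fundamental domain $[-\frac12,\frac12]$ introduces no ambiguity in distinguishing membership in $\T_m$.
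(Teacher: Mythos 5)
Your argument is correct and follows exactly the route the paper intends: Corollary \ref{car} is stated there as an immediate consequence of Theorem \ref{caracterizacion} and Lemma \ref{unmedio}, and you have simply filled in the (easy but worth recording) detail that the bound $\mid x_n\mid\leq\frac{1}{2}$ makes the coset condition $x_n+\Z\in\T_m$ equivalent to the real inequality $\mid x_n\mid\leq\frac{1}{4m}$. Nothing further is needed.
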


\begin{proof}
The proof is an inmediate consequence of \ref{caracterizacion} and \ref{unmedio}.
\end{proof}

\begin{proposition}
$k\in V_{S,m}$ implies $\mid\frac{k_nb_n}{b_{n+1}}\mid\leq\frac{3}{8m}$ for all $n\in\N_0$.
\end{proposition}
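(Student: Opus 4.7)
The plan is to deduce the bound directly from Corollary \ref{car}, exploiting a telescoping trick. First, I would apply the corollary at the index $n+1$ to obtain
\[
\left|\sum_{s=0}^{n}\frac{k_s b_s}{b_{n+1}}\right|\leq \frac{1}{4m}.
\]
The key observation is that the coefficient $\frac{k_n b_n}{b_{n+1}}$ that I want to bound can be isolated as the difference of two partial sums:
\[
\frac{k_n b_n}{b_{n+1}}=\sum_{s=0}^{n}\frac{k_s b_s}{b_{n+1}}-\sum_{s=0}^{n-1}\frac{k_s b_s}{b_{n+1}}.
\]

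Next I would rewrite the second sum by factoring out $b_n/b_{n+1}$, namely
\[
\sum_{s=0}^{n-1}\frac{k_s b_s}{b_{n+1}}=\frac{b_n}{b_{n+1}}\sum_{s=0}^{n-1}\frac{k_s b_s}{b_n},
\]
and then invoke Corollary \ref{car} at the index $n$ to bound the remaining sum by $\frac{1}{4m}$. Since the hypotheses in \ref{bn} ensure $b_n\mid b_{n+1}$ and $b_n\neq b_{n+1}$, we have the crucial inequality $\frac{b_n}{b_{n+1}}\leq \frac{1}{2}$. Combining this with the previous bound gives
\[
\left|\sum_{s=0}^{n-1}\frac{k_s b_s}{b_{n+1}}\right|\leq \frac{1}{2}\cdot\frac{1}{4m}=\frac{1}{8m}.
\]

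Finally, applying the triangle inequality to the telescoping identity yields
\[
\left|\frac{k_n b_n}{b_{n+1}}\right|\leq \frac{1}{4m}+\frac{1}{8m}=\frac{3}{8m},
\]
which is the desired bound. There is no real obstacle here: the argument is essentially a one-line consequence of the corollary together with the divisibility hypothesis on $(b_n)$. The only place where one has to be a bit careful is the edge case $n=0$, but then the sum $\sum_{s=0}^{-1}$ is empty and the inequality reduces to $|k_0/b_1|\leq 1/(4m)$, which follows directly from Corollary \ref{car} at index $1$ (and is in fact sharper than $3/(8m)$).
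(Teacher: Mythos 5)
Your argument is correct and is essentially the paper's own proof read in the contrapositive: the paper assumes $\mid\frac{k_nb_n}{b_{n+1}}\mid>\frac{3}{8m}$ and derives a contradiction with Corollary \ref{car}, using exactly your decomposition of $\frac{k_nb_n}{b_{n+1}}$ as a difference of partial sums together with the bound $\frac{b_{n+1}}{b_n}\geq 2$. Your direct version (including the separate treatment of $n=0$) is fine as it stands.
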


\begin{proof}
For $n=0$, we have $\mid\frac{k_0b_0}{b_1}\mid\stackrel{\ref{car}}{\leq}\frac{1}{4m}\leq\frac{3}{8m}$

\noindent Since $k\in V_{S,m}$, by \ref{car}, $\mid\sum_{s=0}^{n-1}\frac{k_sb_s}{b_n}\mid\leq\frac{1}{4m}$, for all $n\in\N$.

\noindent We suppose that for some $n\geq 1$, we have $\frac{b_n\mid k_n\mid}{b_{n+1}}>\frac{3}{8m}$.

\noindent $\mid\sum_{s=0}^{n-1}\frac{k_sb_s}{b_{n+1}}\mid= \mid\sum_{s=0}^{n}\frac{k_sb_s}{b_{n+1}}-\frac{k_nb_n}{b_{n+1}}\mid\stackrel{\ref{car}}{>}  \frac{3}{8m}-\frac{1}{4m}=\frac{1}{8m}$.

\noindent $\frac{\mid\sum_{s=0}^{n-1}k_sb_s\mid}{b_n}= \frac{\mid\sum_{s=0}^{n-1}k_sb_s\mid}{b_{n+1}}\frac{b_{n+1}}{b_n} >\frac{1}{8m}2=\frac{1}{4m}$; which contradicts the characterization in \ref{car}.

\end{proof}

\begin{proposition}
If $\mid\frac{k_nb_n}{b_{n+1}}\mid\leq\frac{1}{8m}$ for $0\leq n\leq N$ then $k=\sum_{n=0}^{N}b_nk_n\in V_{S,m}$
\end{proposition}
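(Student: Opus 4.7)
The plan is to verify the defining condition $\frac{k}{b_j}+\Z\in\T_m$ for every $j\in\N$, which by definition gives $k\in V_{S,m}$ (and which is precisely the content of Corollary \ref{car}). The key reduction, already exploited in the preceding results, is that $b_s$ is divisible by $b_j$ whenever $s\ge j$, so every term with index $s\ge j$ contributes an integer to $k/b_j$ and vanishes modulo $\Z$. Consequently, only the initial terms are relevant:
\[
\frac{k}{b_j}+\Z \;=\; \sum_{s=0}^{\min(j-1,\,N)}\frac{k_s b_s}{b_j}+\Z .
\]

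Next I would rewrite the hypothesis as $|k_s b_s|\le b_{s+1}/(8m)$ for each $s\in\{0,\dots,N\}$, and exploit the divisibility chain together with $b_s\neq b_{s+1}$: these two conditions force $b_{s+1}/b_s\ge 2$ and therefore $b_{s+1}/b_j\le 2^{s+1-j}$ whenever $s+1\le j$. Plugging these two estimates into the sum gives
\[
\Bigl|\sum_{s=0}^{\min(j-1,\,N)}\frac{k_s b_s}{b_j}\Bigr|
\;\le\; \frac{1}{8m}\sum_{s=0}^{\min(j-1,\,N)} 2^{s+1-j}
\;\le\; \frac{1}{8m}\sum_{i=0}^{\infty}2^{-i}
\;=\; \frac{2}{8m} \;=\; \frac{1}{4m},
\]
so $\frac{k}{b_j}+\Z\in[-\tfrac{1}{4m},\tfrac{1}{4m}]+\Z=\T_m$, and Corollary \ref{car} concludes $k\in V_{S,m}$.

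There is no real obstacle here; once the truncation $s<j$ is made and the doubling bound $b_{s+1}/b_s\ge 2$ is extracted from the hypotheses on $(b_n)$, the argument is a one-line geometric series estimate. The only bookkeeping point worth isolating is the distinction between $j\le N+1$ (the sum runs up to $j-1$ and equals $2-2^{1-j}<2$) and $j>N+1$ (the sum truncates at $N$ and is in fact strictly smaller); both cases are dominated by the same geometric series and yield the uniform bound $\tfrac{1}{4m}$.
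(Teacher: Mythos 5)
Your proof is correct and follows essentially the same route as the paper: truncate the sum modulo $\Z$, pull out the factor $\frac{|k_s|b_s}{b_{s+1}}\le\frac{1}{8m}$, and dominate $\sum \frac{b_{s+1}}{b_j}$ by the geometric series $\sum 2^{-i}=2$ using $b_{s+1}/b_s\ge 2$. If anything you are slightly more careful than the paper, which only verifies the condition of Corollary \ref{car} for $n\le N$ and leaves the (identical) case $n>N+1$ implicit.
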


\begin{proof}
Let $0\leq n \leq N$, then $\mid\sum_{p=0}^{n-1}\frac{k_pb_p}{b_n}\mid\leq\sum_{p=0}^{n-1}\mid\frac{k_pb_p}{b_n}\mid= \sum_{p=0}^{n-1}\frac{b_{p+1}}{b_n}\frac{\mid k_p\mid b_p}{b_{p+1}}\leq \frac{1}{8m}\sum_{p=0}^{n-1}\frac{b_{p+1}}{b_n}$.

\noindent Since $\frac{b_{n-1}}{b_n}\leq\frac{1}{2}$, $\frac{b_{n-k}}{b_n}\leq\frac{1}{2^k}$.

\noindent In the first equality, we consider the change $k=n-1-p$. $\sum_{p=0}^{n-1}\frac{b_{p+1}}{b_n}=\sum_{k=0}^{n-1}\frac{b_{n-k}}{b_n}\leq \sum_{k=0}^{n-1}\frac{1}{2^k}\leq\sum_{k=0}^{\infty}2^{-k}=2$.

\noindent Hence, $\mid\sum_{p=0}^{n-1}\frac{k_pb_p}{b_n}\mid\leq \frac{1}{8m}\sum_{p=0}^{n-1}\frac{b_{p+1}}{b_n}\leq\frac{1}{4m}$.

\noindent Hence $k\in V_{S,m}$.

\end{proof}

\section{Relation between linear topologies and $S$-topologies associated}

\noindent In this section we will prove some differences between both types of topologies.

\noindent Let $\mathbf{b}$ $=(b_n)_{n\in\N}$ be a sequence as in \ref{bn}. We define the linear topology associated to $\mathbf{b}$ as the one having $\{b_n\Z\mid n\in\N\}$ as neighbourhood basis at $0$. We shall denote it by $\tau_{(b_n\Z)}$. We shall also deal with the $S$-topology where $S=\{\frac{1}{b_n}+\Z\mid n\in\N\}$.

\noindent We already know some facts of each topology; in this section we will prove that $\tau_S\neq\tau_{(b_n\Z)}$ for any $\mathbf{b}$.

\noindent In \ref{tau_Sesdiscreta} we have already seen that $\tau_S\neq\tau_{(b_n\Z)}$ if $(\frac{b_{n+1}}{b_n})$ is bounded ($\tau_{(b_n\Z)}$ is never discrete). 

\noindent As a consequence we must only prove the result if ($\frac{b_{n+1}}{b_n}$) is unbounded.

\begin{proposition}
Let $\mathbf{b}$ $=(b_n)$ be a sequence as in \ref{bn}, and such that ($\frac{b_{n+1}}{b_n}$) is unbounded. Let $\tau_{(b_n\Z)}$ and $\tau_S$ the linear and the $S$-topology associated to $\mathbf{b}$, respectively. Then $\tau_S\neq\tau_\mathfrak{L}$.
\end{proposition}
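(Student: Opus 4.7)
The strategy is to exhibit a sequence $(l_k)\subset\Z$ that converges to $0$ in $\tau_{(b_n\Z)}$ but not in $\tau_S$; since the preceding proposition already guarantees that $\tau_S$ is finer than $\tau_{(b_n\Z)}$, producing such a sequence will immediately make the inclusion strict.

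Using the hypothesis that $(b_{n+1}/b_n)$ is unbounded, I would first select a strictly increasing sequence of indices $n_1<n_2<\cdots$ with $r_k:=b_{n_k+1}/b_{n_k}\geq 4$ for every $k$. Then set $c_k:=\lfloor r_k/2\rfloor$ (a positive integer with $c_k\geq 2$) and define $l_k:=c_k\, b_{n_k}$. Convergence $l_k\rightarrow 0$ in $\tau_{(b_n\Z)}$ is immediate from the divisibility criterion proved earlier: for any fixed $N\in\N$, once $n_k\geq N$ we have $b_N\mid b_{n_k}\mid l_k$, so $l_k\in b_N\Z$ for all sufficiently large $k$.

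To show $l_k\not\rightarrow 0$ in $\tau_S$, I would take $m=1$ and test at the character $\tfrac{1}{b_{n_k+1}}+\Z\in S$. A direct computation gives
\[
\frac{l_k}{b_{n_k+1}}\;=\;\frac{c_k\, b_{n_k}}{r_k\, b_{n_k}}\;=\;\frac{\lfloor r_k/2\rfloor}{r_k},
\]
and an elementary case-check (even and odd $r\geq 4$) shows this ratio lies in $[2/5,\,1/2]$. Consequently $\tfrac{l_k}{b_{n_k+1}}+\Z$ sits at distance at least $2/5$ from $0+\Z$ in $\T$, placing it outside $\T_1=[-1/4,1/4]+\Z$. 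Hence $l_k\notin V_{S,1}$ for every $k$, so $l_k\not\rightarrow 0$ in $\tau_S$, and the two topologies differ.

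No substantial obstacle is anticipated: the argument is a clean assembly of the convergence criteria proved in the two previous sections together with the trivial numerical estimate on $\lfloor r/2\rfloor/r$. The only mildly creative step is recognising that the multiple $\lfloor r_k/2\rfloor\, b_{n_k}$ of $b_{n_k}$ is engineered so that division by $b_{n_k+1}$ produces an element near $\tfrac{1}{2}$ in $\T$, thereby being small in $\tau_{(b_n\Z)}$ while remaining bounded away from $0$ in $\tau_S$.
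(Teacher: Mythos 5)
Your proposal is correct and is essentially the paper's own argument: the paper also takes $l_j=b_j\left[\frac{b_{j+1}}{2b_j}\right]$, tests it against the character $\frac{1}{b_{j+1}}+\Z\in S$, and does the same even/odd case check to see that the image lands near $\frac12+\Z$, hence outside $\T_+$. The only cosmetic difference is that you restrict to the subsequence $(n_k)$ with ratio at least $4$ from the outset, whereas the paper defines the full sequence and argues non-convergence via a cofinal set of bad indices (using the threshold $\frac{b_{n_k}}{b_{n_k+1}}\leq\frac{1}{16}$ in the odd case).
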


\begin{proof}
Since ($\frac{b_{n+1}}{b_n}$) is unbounded, there exists a sequence $(n_k)_{k\in\N}\subset\N$ such that $\frac{b_{n_k}}{b_{n_k+1}}\rightarrow 0$. 

\noindent It suffices to see that there exists a sequence $(l_j)$ such that $l_j\rightarrow 0$ in $\tau_{b_n\Z}$, but $l_j\nrightarrow 0$ in $\tau_S$.

\noindent Fix $l_j=b_j[\frac{b_{j+1}}{2b_j}]$, where $[x]$ is the biggest integer which is smaller or equal to $x$.

\noindent Since $b_j\mid l_j$ it is obvious that $l_j\rightarrow 0$ in $\tau_\mathfrak{L}$.

\noindent Let us show that $l_j\nrightarrow 0$ in $\tau_S$. 
Since $(\frac{b_{n_k}}{b_{n_k+1}})_k\rightarrow 0$, there exists $k_0$ such that $\frac{b_{n_k}}{b_{n_k+1}}\leq\frac{1}{16}$ if $k\geq k_0$.

\noindent We prove that for $A:=\{n_k\mid k\geq k_0\}$ and for every $j=n_{k_0}$, there exists $n\in\N$ (we prove that $n=n_{k_0}+1$) such that $\frac{l_j}{b_n}\Z\notin\T_+$, which shows that $l_j\nrightarrow 0$ in $\tau_S$.

\noindent $\frac{l_j}{b_{j+1}}+\Z=\frac{b_j}{b_{j+1}}[\frac{b_{j+1}}{2b_j}]+\Z$.

\noindent If $\frac{b_{j+1}}{b_j}$ is even, then $[\frac{b_{j+1}}{2b_j}]=\frac{b_{j+1}}{2b_j}$, and $\frac{l_j}{b_{j+1}}+\Z=\frac{b_j}{b_{j+1}} \frac{b_{j+1}}{2b_j}+\Z =\frac{1}{2}+\Z\notin\T_+$.

\noindent If $\frac{b_{j+1}}{b_j}$ is odd then $[\frac{b_{j+1}}{2b_j}]=\frac{b_{j+1}}{2b_j}-\frac{1}{2}$, and $\frac{l_j}{b_{j+1}}+\Z=\frac{b_j}{b_{j+1}}(\frac{b_{j+1}}{2b_j}-\frac{1}{2})+\Z= \frac{1}{2}(1-\frac{b_j}{b_{j+1}})$. By the choice of $k_0$, $\frac{l_j}{b_{j+1}}+\Z\notin\T_+$.

\noindent The result follows.

\end{proof}

\bibliographystyle{plain}

}
\backmatter

\end{document}